\newtheorem{thm}{Theorem}[section]
\newtheorem{theorem}[thm]{Theorem}
\newtheorem{cor}[thm]{Corollary}
\newtheorem{lemma}[thm]{Lemma}
\newtheorem{prop}[thm]{Proposition}
\newtheorem{step}{Step}
\theoremstyle{definition}
\newtheorem{notation}[thm]{Notation}
\theoremstyle{remark}
\newtheorem{rmk}[thm]{Remark}
\newtheorem{remark}[thm]{Remark}
\newtheorem{exa}[thm]{Example}
\numberwithin{equation}{section}
\def\$$endproof{\eqno{\qedhere}$$\end{proof}}
\def\cR{\mathcal{R}}
\def\cDT{{\mathcal{D}\mathcal{T}}}
\def\cF{\mathcal{F}}
\def\bF{\mathbf{F}}
\def\bK{\mathbf{K}}
\def\C{\mathbb{C}}
\def\F{\mathbb{F}}
\def\K{\mathbb{K}}
\def\Q{\mathbb{Q}}
\def\Z{\mathbb{Z}}
\def\Kalg{\widetilde{\K}} 
\def\Falg{\widetilde{\F}}
\tikzset{mytree/.style={fill,circle,inner sep=-.5pt, level
    distance=5mm,
    level 1/.style={sibling distance=6mm,level distance=5mm},
    level 2/.style={sibling distance=2mm,level distance=6mm},
    level 3/.style={sibling distance=1mm}}}
\def\trianglewidth{8cm}%
    \tikzset{d/.style={minimum width=1pt,inner sep=1pt,circle,fill=black}}
    \tikzset{f/.style={minimum width=1.2pt,inner sep=1pt,circle,fill=red}}
    \tikzset{g/.style={circle,fill=white}}
\newcommand{\cS}{\mathcal{S}}
\def\l{\left}
\def\r{\right}
\def\GL{{\rm GL}}
\def\change#1{#1}
\def\phir{\Phi_R}
\title{Inhomogeneous order $1$ iterative functional equations with applications to combinatorics}
\author{Lucia Di Vizio,  Gwladys Fernandes, and  Marni Mishna
\footnote{This project has received funding from the ANR project
\href{https://specfun.inria.fr/chyzak/DeRerumNatura/}{DeRerumNatura,
ANR-19-CE40-0018}. Marni Mishna was supported by NSERC Discovery Grant RGPIN-2017-04157.
Gwladys Fernandes had a Hadamard Lecturer position, supported by the Fondation Mathématique Jacques Hadamard (FMJH).
We are grateful to CNRS IRL-PIMS International for their support of this collaboration.}}
\begin{document}
\bibliographystyle{alpha}

\maketitle

\begin{abstract}
We show that if a Laurent series $f\in\C((t))$ satisfies a particular kind of linear iterative equation, then $f$ is either a rational function or it is differentially transcendental over $\C(t)$.  This condition is more precisely stated as follows:
We consider $R,b\in \C(t)$ with
$R(0)=0$, such that $f(R(t))=f(t)+b(t)$.
If either $R'(0)=0$ or $R'(0)$ is a root of unity,
then either $f$ is a rational function, or $f$ does not satisfy a polynomial differential equation.
More generally a solution of a functional equation of the form $f(R(t))=a(t)f(t)+b(t)$ will be either differentially trascendental or the solution of an
inhomogeneous linear differential equation of order $1$ with rational coefficients.
\par
We illustrate how to apply these results to deduce the differential transcendence of combinatorial generating functions by considering three examples: the ordinary generating function for a family of
complete trees; the Green function for excursions on the Sierpinski graph; and a series related to the enumeration of permutations avoiding the consecutive pattern 1423.
\par
The proof strategy is inspired by the Galois theory of functional
equations and relies on the property of the dynamics of $R$.

\medskip

{{\sc\noindent Keywords}.  D-finite, differentially transcendental, differential Galois theory,  iterative equation, generating function, self-similar graphs, complete trees, pattern avoiding permutation  } \\
{{\sc\noindent MSC2020 Classification:} 12H05, 05A15}
\end{abstract}

\setcounter{tocdepth}{3}
\tableofcontents

\section{Introduction}

In the study of discrete structures, enumerative data can be encoded
in the coefficients of a formal power series, its generating function.
It can be both theoretically, and practically useful to know
which functional equations such a generating function satisfies and which ones it doesn't. This is true in particular with reference to the differential properties of the generating function. Recall that
a series is \emph{differentially transcendental} over the field $\C(t)$ of rational functions with complex coefficients,
if for any integer $n\geq 0$, and for any polynomial $P$ in $n+1$ variables with coefficients in $\C(t)$,
we have $P(f,f',\dots,f^{(n)})\neq 0$, where $f^{(n)}$ is the $n$-fold derivative
of $f$ with respect to $t$, i.e. if it satisfies no algebraic differential equation with coefficients in $\C(t)$.
\par
In this work we establish the differential transcendence of series arising from the study of three different types of combinatorial objects: families of trees under a constraint on the leaves; walks on fractal-like graphs that start and end at the same point; and a family of consecutive-pattern avoiding permutations. These three examples all have something in common: series that satisfy an order 1 \emph{iterative} functional equation of the form $f(R(t))=a(t)f(t)+b(t)$ with $a, b$ and $R$ rational functions. These equations are ultimately the object studied in this paper.
\par
We have made the choice to
present the applications first, which allows us to state our main theorems through the
lens of enumerative combinatorics: Indeed the statements are straightforward to apply
and unify a number of existing results.
We will say more on the Galoisian techniques involved in their proof afterward,
in  \S\ref{sec:intro-theoretical-results}.

\subsection{Differential transcendence in combinatorics}

A key application of our work is in the taxonomy of discrete structures.
A \emph{combinatorial class} is a set of discrete objects, each with a size, subject to the condition that the number of objects of a given size is finite. Given a class, enumerative data can be encoded in a formal power series in~$t$ such that the coefficient of $t^n$ is the number of objects of size $n$. This is the \emph{(ordinary) generating function (OGF) associated to the class}.   Classifying generating functions by the kind of equations they satisfy offers a meaningful organization of the structures themselves.

Stanley's call~\cite{stanley_differentiably_1980} to determine, and ideally characterize, those combinatorial classes whose generating functions satisfy linear differential equations with  polynomial coefficients (that is, the generating function is  D-finite)  spawned enormous activity in a variety of subdomains, notably pattern avoiding permutations (such as the Noonan-Zielberger  conjecture~\cite{noonan_enumeration_1996}) and lattice paths confined to the first quadrant~(the Bousquet--M\'elou-Mishna conjecture~\cite{bousquet-melou_walks_2010}). Group theory gives another example: The \emph{co-growth series} of a group is the generating function of excursions on its Cayley graph. It is directly related to the word problem associated to the group. The nature of the co-growth series (rational, algebraic, D-finite) reveals properties about the group, such as amenability~\cite{bell_complexity_2020}. A generating function analysis is a key pillar of a recent strategy~\cite{price_numerical_2019,elder_cogrowth_2012} to determine the amenability of Thompson's Group F, for example. It remains open to find a nice characterization of the set of combinatorial classes with a D-finite generating function, although such an understanding may happen in parallel with the resolution of a well-known, longstanding conjecture of Christol characterizing certain D-finite functions in terms of diagonals of multivariable rational functions~\cite{christol_globally_1990}.

There are a variety of techniques to decide the question of D-finiteness of a given series or function.  For instance, one can conclude that a function is not D-finite if the coefficient asymptotics of its Taylor expansion is not of a certain form~\cite{flajolet_non-holonomic_2004,flajolet_lindelof_2010,bousquet-melou_spanning_2015} or if the function has an infinite number of singularities~\cite{bousquet-melou_walks_2003, mishna_two_2009, beaton_consecutive_2017}. As we give a criterion for differential transcendence of series, we determine a new means to conclude that a function is not D-finite.

We use standard notation such as: $\C$ for the field of complex numbers, $\C(t)$ for the rational functions with complex coefficients and
$\C((t))$ for the field of (formal) Laurent series with coefficients in $\C$.
Throughout we identify $\C(t)$ with a sub-field of $\C((t))$, by identifying rational functions with their Taylor expansion.
We fix $R(t)\in t\C[[t]]$
a \emph{nonzero} power series, and we
define the following field endomorphism of $\C((t))$: 
\[
    \phir:\sum_nf_nt^n\mapsto
    f(R(t)):=\sum_nf_nR(t)^n\,.
    \]
This is well defined since $R$ has no constant term.
We make the following assumptions on~$R$, which we will comment upon later:
    \[\begin{array}{l}
            R(t)\in\C(t),~R(0)=0,~R'(0)\in\{0,1,\hbox{roots of unity}\},\\
            \hbox{but no iteration of $R(t)$ is equal to the identity.}
        \end{array}\leqno{(\cR)}
    \]
Our first main theorem is:

\begin{restatable}{mainthm}{mainzero}
\label{thmINTRO:main-0}
\change{Let $R(t)$ satisfy assumption $(\cR)$.
We suppose that there exist $a,b\in \C(t)$,
and $f\in\C((t))$ such that $f(R(t))=a(t)f(t)+b(t)$.
Then either $f$ is differentially transcendental over $\C(t)$
or there exists $\alpha,\beta\in\C(t)$ such that $f'=\alpha f+\beta$. }
\end{restatable}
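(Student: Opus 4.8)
The plan is to prove the dichotomy by assuming that $f$ is differentially algebraic over $\C(t)$ and then producing $\alpha,\beta\in\C(t)$ with $f'=\alpha f+\beta$; if no such relation exists, $f$ is forced to be differentially transcendental. The backbone of the argument is the interaction between the derivation $\delta=\tfrac{d}{dt}$ and the endomorphism $\phir$. Differentiating $\phir(g)=g(R(t))$ yields the skew-commutation rule $\delta\circ\phir=R'\cdot(\phir\circ\delta)$, and differentiating the functional equation $\phir(f)=af+b$ repeatedly shows, by induction on $k$, that $\phir(f^{(k)})$ lies in the $\C(t)$-span of $1,f,f',\dots,f^{(k)}$. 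Hence every finite-dimensional space $V_n=\C(t)+\sum_{k\le n}\C(t)\,f^{(k)}$ is stable under $\phir$, and the difference--differential algebra generated by $f$ carries compatible actions of $\delta$ and of $\phir$. This is the object I would feed into a Galoisian analysis.

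I would then view $f$ through the order-$1$ equation $\phir(Y)=AY$ with $A=\left(\begin{smallmatrix}a&b\\0&1\end{smallmatrix}\right)$, whose (parametrized, $\delta$-) Galois group $G$ is a differential algebraic subgroup of the affine group $\mathrm{Aff}=\{\,y\mapsto cy+d\,\}$. In this framework the differential algebraicity of $f$ over $\C(t)$ corresponds exactly to $G$ being a \emph{proper} $\delta$-algebraic subgroup of $\mathrm{Aff}$. The proper $\delta$-subgroups of $\mathrm{Aff}$ are classified by linear $\delta$-equations on their $\mathbb{G}_m$- and $\mathbb{G}_a$-coordinates, and each such defining equation translates, via the torsor of solutions, into an inhomogeneous linear differential equation satisfied by $f$. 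Producing \emph{some} linear ODE for $f$ is thus relatively soft; the whole difficulty is to show that this ODE can be taken of order $1$, and this is exactly where assumption $(\cR)$ must intervene. Concretely, I expect to split off the multiplicative part by analysing the homogeneous companion $\phir(h)=ah$: its logarithmic derivative $u=h'/h$ satisfies the first-order equation $\phir(u)=\tfrac{1}{R'}\bigl(u+\tfrac{a'}{a}\bigr)$, and showing $u\in\C(t)$ (so that $h$ is hyperexponential) is what will ultimately keep the order down to $1$.

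The decisive and hardest step is the dynamical input extracted from $(\cR)$. Since $R(0)=0$ with $R'(0)$ either $0$ or a root of unity, while no iterate of $R$ is the identity, the fixed point $0$ is superattracting or parabolic, so a generic orbit of $R$ is infinite and accumulates at $0$. I would use this to establish the two facts the Galois correspondence needs: first, that the field of $\phir$-invariants of $\C(t)$ (and of the relevant Picard--Vessiot extension) is exactly $\C$, so that the constants are correct; and second, a sharp solvability/obstruction statement for the auxiliary first-order equations $\phir(u)-u=c$ and $\phir(u)=\tfrac{1}{R'}\bigl(u+\tfrac{a'}{a}\bigr)$ with $c\in\C(t)$. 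The lever is the valuation at $0$: in the superattracting case $v_0\circ\phir=d\cdot v_0$ where $R(t)=c_dt^{d}+\cdots$ with $d\ge 2$, and in the parabolic case $v_0\circ\phir=v_0$ with a controlled higher-order correction. These growth constraints obstruct all but the lowest-order coboundaries and force the homogeneous twist $h$ to be hyperexponential over $\C(t)$, which is precisely what collapses the a priori higher-order linear operator furnished by the Galois correspondence down to order $1$. Reassembling the homogeneous and inhomogeneous parts then returns a genuine relation $f'=\alpha f+\beta$ with $\alpha,\beta\in\C(t)$; the degenerate cases $f\in\C(t)$ and $f$ algebraic over $\C(t)$ are checked separately and fall under the same conclusion.
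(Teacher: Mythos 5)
Your overall frame --- parametrized difference Galois theory applied to the affine system $\phir(Y)=AY$, with differential algebraicity of $f$ reflected in the Galois group being a proper differential algebraic subgroup of the affine group --- is the right family of ideas, but the proposal breaks down at its foundation: the parametrized theory you invoke requires a derivation that \emph{commutes} with the difference operator, and $\delta=\frac{d}{dt}$ does not commute with $\phir$; it only satisfies the twisted rule $\delta\circ\phir=R'\,(\phir\circ\delta)$ that you yourself write down. With a non-commuting pair there is no ``$\delta$-Galois group inside the affine group classified by linear $\delta$-equations''; that classification is simply not available. The paper's central device for repairing this is precisely what is missing from your proposal: a solution $\Psi$ of the Julia equation $\phir(y)=R'(t)\,y$ (the iterative logarithm), which makes $\partial:=\Psi\frac{d}{dt}$ commute with $\phir$ (Lemma~\ref{lemma:CommutationPartialPhi}). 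This repair has two structural consequences that your plan does not account for. First, the Galoisian output then concerns $\partial$-equations with coefficients in $\K=\C(t)(\Psi,\Psi',\dots)$, not $\frac{d}{dt}$-equations over $\C(t)$; descending back to $\C(t)$ costs the paper its Steps 1 and 2 (algebraic independence of the $\Psi_i$, extraction via partial derivatives, residue arguments on logarithmic derivatives). Second, one must control the differential nature of $\Psi$ itself: by Becker--Bergweiler (Theorem~\ref{thm:Becker-Bergweiler}), either $\Psi$ is differentially transcendental (assumption $(\cDT)$, where the new machinery operates), or $R$ is conjugate by a homography to $t^d$ or $\pm T_d$, in which case the equation is transported to a Mahler equation and one must quote the known Mahler results. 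This dichotomy appears nowhere in your proposal.

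Your proposed substitute --- the ``dynamical input'' via valuations at $0$ --- proves only the easy fact (that the $\phir$-invariants are $\C$; this is the paper's Lemma~\ref{lemma:constants-Phi-R}, by a coefficient argument) and cannot deliver the hard one: that a differentially algebraic solution $h$ of $\phir(y)=ay$ has $h'/h\in\C(t)$. That statement is Theorem~\ref{thmINTRO:main-multiplicative} and Proposition~\ref{prop:maint-multiplicative-ALG}, and its proof needs the commuting derivation, the Galoisian black box (Proposition~\ref{prop:DiffAlgHanoi} and its ring-theoretic analogue), and residue plus $\Q$-linear-algebra arguments; valuation growth at $0$ says nothing about it, not least because $h$ need not lie in $\C((t))$ or even in Puiseux series --- the paper must build abstract solution algebras satisfying $(\cF)$ (Wibmer's existence theorem) before there is an $h$ to reason about. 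Relatedly, you have the constants issue backwards: the obstacle is not to verify that the invariants equal $\C$, but that $\C$ is \emph{too small} (not differentially closed) for the parametrized correspondence to yield a nontrivial automorphism; the paper must extend constants to a differential closure $C$ of $\C$ (\S\ref{subsec:GaloisTheory}) in order to produce $\phi$ with $\phi(f)\neq f$ and thereby the auxiliary homogeneous solution $z=f-\phi(f)$. Without these ingredients your argument cannot get past its first Galois-theoretic assertion, and the claimed ``collapse to order $1$'' remains unsupported.
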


For particular choices of $a$ and $b$ we obtain stronger results, namely:

\begin{restatable}{mainthm}{mainA}
\label{thmINTRO:a=1}
Let $R(t)$ satisfy assumption $(\cR)$.
We suppose that there exists $b\in \C(t)$ and $f\in\C((t))$ such that $f(R(t))=f(t)+b(t)$.
Then either $f\in \C(t)$, or~$f$ is differentially transcendental over $\C(t)$.
\end{restatable}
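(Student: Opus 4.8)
The plan is to deduce \Cref{thmINTRO:a=1} from \Cref{thmINTRO:main-0}. Applying the latter with $a=1$, I may assume that $f$ is not differentially transcendental, so that $f'=\alpha f+\beta$ for some $\alpha,\beta\in\C(t)$; the remaining task is to upgrade this order-$1$ relation to $f\in\C(t)$. My first move is to differentiate the functional equation $f(R(t))=f(t)+b(t)$, giving $f'(R(t))R'(t)=f'(t)+b'(t)$, and then to substitute the order-$1$ relation for $f'$ evaluated at both $t$ and $R(t)$, using $f(R(t))=f(t)+b(t)$ to re-express $f(R(t))$. Collecting the terms carrying $f(t)$ yields an identity of the shape
\[
\bigl(\alpha(R(t))R'(t)-\alpha(t)\bigr)\,f(t)=\beta(t)+b'(t)-\alpha(R(t))b(t)R'(t)-\beta(R(t))R'(t),
\]
whose right-hand side is rational. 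Hence, as long as the coefficient $\alpha(R(t))R'(t)-\alpha(t)$ is not identically zero, I can solve for $f$ and conclude immediately that $f\in\C(t)$.

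The genuinely hard case is the degenerate one, $\alpha(R(t))R'(t)=\alpha(t)$, which says exactly that the rational $1$-form $\omega=\alpha\,dt$ is $R$-invariant, $R^{*}\omega=\omega$. The idea is that assumption $(\cR)$ leaves no room for such a form. Working on $\mathbb{P}^1$, invariance forces $\operatorname{Res}_p\omega=m_p\,\operatorname{Res}_{R(p)}\omega$ at every point, where $m_p$ is the local degree of $R$ at $p$; therefore the finite set $T$ of poles of $\omega$ with nonzero residue satisfies $R(T)\subseteq T$ and $R^{-1}(T)\subseteq T$, i.e. $T$ is completely invariant. Since no iterate of $R$ is the identity, $R$ has infinite order, so a completely invariant finite set must lie in its exceptional locus (at most two points when $\deg R\ge 2$, and only the parabolic fixed point when $R$ is Möbius, the sole Möbius case compatible with $(\cR)$). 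Feeding the multiplicity $m_p=\deg R$ at an exceptional point into the residue relation, together with the fact that the residues of a rational $1$-form on $\mathbb{P}^1$ sum to zero, forces $T=\varnothing$; thus $\omega$ is exact, $\alpha=G'$ with $G\in\C(t)$. The homogeneous solution $g=\exp\!\int\alpha$ then satisfies $g(R(t))=C\,g(t)$ for some constant $C$, equivalently $G(R(t))=G(t)+\kappa$; comparing degrees when $\deg R\ge 2$, or passing to the translation coordinate at the parabolic fixed point when $R$ is Möbius, and using that $g$ must remain compatible with $f\in\C((t))$, forces $G$ constant and hence $\alpha=0$.

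It then remains to handle $\alpha=0$, where $f'=\beta\in\C(t)$ and so $f$ is a rational function plus a $\C$-linear combination $\sum_i r_i\log(t-c_i)$ of logarithms, with no $\log t$ term since $f\in\C((t))$. Differentiating the functional equation shows that the $1$-form $\eta=\sum_i r_i\bigl(\tfrac{R'}{R-c_i}-\tfrac{1}{t-c_i}\bigr)dt$ has vanishing residue at every point, and repeating the residue bookkeeping makes the support $S=\{c_i:r_i\neq 0\}$ completely invariant, with the residue balance around each cycle forcing all local multiplicities there to equal $1$. This is incompatible with $S$ sitting in the exceptional locus of an infinite-order $R$ (whose exceptional points are totally ramified, with $m_p=\deg R\ge 2$, when $\deg R\ge 2$, and reduce to the fixed point $0\notin S$ when $R$ is Möbius), so $S=\varnothing$ and $f\in\C(t)$. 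I expect the main obstacle to be precisely this dynamical input: converting the invariance relations coming from the functional equation into complete-invariance statements and then exploiting that $(\cR)$ — infinite order together with the fixed point at $0$ — admits no nonempty completely invariant finite set outside the exceptional locus. The delicate points to check carefully are the contributions at $\infty$ and at the poles of $R$ in the residue count, and the separate verification that the parabolic Möbius case produces no essential singularity compatible with $f\in\C((t))$.
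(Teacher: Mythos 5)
Your overall architecture — invoke Theorem~\ref{thmINTRO:main-0} with $a=1$ to get $f'=\alpha f+\beta$, then upgrade to $f\in\C(t)$ — is not circular (the paper's proof of Theorem~\ref{thmINTRO:main-0} does not use Theorem~\ref{thmINTRO:a=1}), though it leans on the deepest result in the paper rather than the lighter route the paper takes (Becker--Bergweiler plus Mahler reduction when $\Psi$ is differentially algebraic, and Theorem~\ref{thmINTRO:main-additive} otherwise). For $\deg R\geq 2$ your degenerate-case analysis is sound in outline: the residue set of an invariant form is completely invariant, hence exceptional; total ramification gives $r=d^{\ell}r$, so all residues vanish; then $G(R(t))=G(t)+\kappa$ with a degree count forces $G$ constant, and your final residue bookkeeping (whose contributions at $\infty$ and at poles of $R$ you rightly flag as delicate) disposes of $\alpha=0$.

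The genuine gap is the M\"obius case, which under $(\cR)$ means $R$ parabolic with fixed point $0$ (as you note, elliptic multipliers that are roots of unity force a finite-order iterate, excluded by $(\cR)$); the prototype is $R(t)=t/(1+t)$, i.e.\ exactly Nishioka's case. There your degenerate case genuinely occurs with $\alpha\neq 0$: taking $G(t)=\kappa/t$ one has $G(R(t))=\kappa(1+t)/t=G(t)+\kappa$, so $\alpha=G'=-\kappa/t^{2}$ satisfies $\alpha(R(t))R'(t)=\alpha(t)$. Hence your assertion that compatibility with $f\in\C((t))$ ``forces $G$ constant'' is false as an unconditional statement about $G$, and no argument is supplied for the statement you actually need: if $f\in\C((t))$ satisfies both $f(R(t))=f(t)+b(t)$ and $f'=-\frac{\kappa}{t^{2}}f+\beta$ with $\kappa\neq 0$, then $f\in\C(t)$. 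In the coordinate $s=1/t$ this says that a formal series at infinity satisfying simultaneously a shift equation $F(s+1)=F(s)+B(s)$ and a first-order ODE $F'=\kappa F+\gamma$ must be rational; the ODE alone cannot give this (e.g.\ $F'=F-1/s$ has as its unique solution in $\C((1/s))$ the divergent series $\sum_{n\geq 1}(-1)^{n-1}(n-1)!\,s^{-n}$), so the difference equation must intervene in an essential way. That interaction is precisely the hard core of the theorem — it is the content of Nishioka's result and of the consistency theorems of B\'ezivin--Gramain and Sch\"afke--Singer cited in the paper, and it is what the paper supplies via the differential transcendence of the Abel function (Proposition~\ref{prop:Psi}) fed into the Galoisian Proposition~\ref{prop:DiffAlgHanoi}. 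Your residue machinery is inert here (all residues already vanish), so the parabolic case, which is squarely within the scope of $(\cR)$, remains unproved.
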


and:

\begin{restatable}{mainthm}{mainB}
\label{thmINTRO:b=0}
Let $R(t)$ satisfy assumption~$(\cR)$.
We suppose that there exist
$f\in\C((t))$ and $a\in\C(t)$, such that
$f(R(t))=a(t)f(t)$. Then either $f$ is algebraic over $\C(t)$ \change{and there exists a positive integer $N$ such that $f^N\in\C(t)$}, or $f$ is differentially transcendental over $\C(t)$.
\end{restatable}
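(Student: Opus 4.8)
The plan is to deduce \Cref{thmINTRO:b=0} from the general statement \Cref{thmINTRO:main-0} together with a logarithmic-derivative computation that upgrades the conclusion of that theorem to the far more rigid form required here. We may assume $f\neq0$, since otherwise the claim is trivial (and then $a\neq0$ as well). Applying \Cref{thmINTRO:main-0} to $f(R)=a(t)f(t)$, i.e. the case $b=0$, either $f$ is differentially transcendental over $\C(t)$, in which case the second alternative holds and we are done, or there exist $\alpha,\beta\in\C(t)$ with $f'=\alpha f+\beta$. Assume from now on that $f$ is differentially algebraic; the remaining task is to prove that $f$ is algebraic and that $f^{N}\in\C(t)$ for some $N$.

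First I would differentiate the functional equation. Writing $g:=f'/f\in\C((t))$ and differentiating $f(R)=af$ gives $R'(t)\,g(R(t))=g(t)+a'(t)/a(t)$, equivalently $g(R)=\frac{1}{R'}\,g+\frac{a'}{aR'}$, which is again a linear functional equation with rational coefficients, of the type governed by \Cref{thmINTRO:main-0}. Substituting $g=\alpha+\beta/f$ into this identity and using $f(R)=af$ to eliminate $f(R)$, all non-rational contributions collect into a single term proportional to $1/f$, leaving a relation $Pf=Q$ with $P:=R'\,\alpha(R)-\alpha-a'/a$ and $Q:=\beta-R'\beta(R)/a$ both in $\C(t)$. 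If $P\neq0$ then $f=Q/P\in\C(t)$ is rational, hence algebraic with $f^{1}\in\C(t)$, and we are done.

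It remains to treat the case where $f$ is irrational, which forces $P=Q=0$; in particular $\alpha$ itself satisfies $R'\alpha(R)=\alpha+a'/a$, while $g-\alpha=\beta/f$ satisfies the homogeneous equation $R'\,y(R)=y$. The next step is to show that in fact $g=f'/f\in\C(t)$, i.e. that $f$ solves the homogeneous differential equation $f'=\alpha f$. Since $f$ is differentially algebraic, so is $g$; applying \Cref{thmINTRO:main-0} to the functional equation satisfied by $g$ yields $g'=\gamma g+\delta$ with $\gamma,\delta\in\C(t)$, and repeating the elimination above, now with $g$ in place of $f$, produces a relation $\widehat P\,g=\widehat Q$ with $\widehat P,\widehat Q\in\C(t)$. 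Unless $\widehat P=0$ this gives $g\in\C(t)$ at once; the degenerate possibility $\widehat P=0$ reflects the homogeneity of $f(R)=af$ and I would dispose of it by applying the same dichotomy to $\beta/f$, which solves $R'y(R)=y$. The outcome of this stage is $f'/f=\alpha\in\C(t)$, so that $f=\exp\!\int\alpha$.

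The heart of the argument, and the step I expect to be the main obstacle, is the final analysis of $\alpha$. Reading $R'\alpha(R)=\alpha+a'/a$ as an identity of meromorphic differentials on $\mathbb{P}^{1}$, it becomes $R^{*}\omega=\omega+d\log a$ for $\omega:=\alpha\,dt$, where $d\log a$ has only simple poles and integer residues. One must deduce from this twisted invariance that $\omega$ is itself a differential of the third kind, with only simple poles, and with rational residues; granting this, $\alpha=\sum_{i}q_{i}/(t-\rho_{i})$ with $q_{i}\in\Q$, whence $f=c\prod_{i}(t-\rho_{i})^{q_{i}}$ is algebraic and $f^{N}\in\C(t)$ for $N$ a common denominator of the $q_{i}$. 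Controlling the higher-order poles of $\omega$ and proving the rationality of its residues is exactly where the dynamical hypotheses $(\cR)$ enter: the fixed point $0$ with multiplier $R'(0)\in\{0\}\cup\{\text{roots of unity}\}$ together with the requirement that no iterate of $R$ be the identity are what make the orbit sums and the local analysis at the fixed point go through, forcing the exponents to be rational rather than arbitrary complex numbers. I expect this dynamical residue computation to be the technical core of the proof.
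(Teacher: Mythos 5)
Your opening move is the fatal one: you derive Theorem~\ref{thmINTRO:b=0} from Theorem~\ref{thmINTRO:main-0}, but in this paper the logical dependency runs the other way. Theorem~\ref{thmINTRO:main-0} is the \emph{last} result proved (Section~\ref{sec:Main}), and its proof explicitly invokes Theorem~\ref{thmINTRO:b=0} --- both in the case where $\Psi$ is differentially algebraic (the minimal-polynomial argument there ends with ``It follows from Theorem~\ref{thmINTRO:b=0}\dots'') and, through Theorem~\ref{thmINTRO:main-multiplicative}, in the case analysis closing Section~\ref{sec:Main}. So within the architecture of the paper your argument is circular, and there is no independent proof of Theorem~\ref{thmINTRO:main-0} to appeal to instead. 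The paper proves Theorem~\ref{thmINTRO:b=0} \emph{before} and independently of Theorem~\ref{thmINTRO:main-0}, by a dichotomy on the iterative logarithm $\Psi$ of Equation~\eqref{eq:Julia}: when $\Psi$ is differentially algebraic, Theorem~\ref{thm:Becker-Bergweiler} conjugates the equation to a Mahler equation, where the result is known; when $\Psi$ is differentially transcendental, the statement is Theorem~\ref{thmINTRO:main-multiplicative}, proved in \S\ref{sec:main-homogenous} from the Galoisian black box Proposition~\ref{prop:DiffAlgHanoi}; and $f^N\in\C(t)$ is then extracted by the minimal monic polynomial trick in \S\ref{sec:proof-strategy-rational}. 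Your proposal never engages with $\Psi$, which is the paper's essential tool.

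Even granting Theorem~\ref{thmINTRO:main-0} as a black box, the two steps you yourself flag as delicate are genuinely incomplete. First, the branch $P=0$ is not a marginal degeneracy: it is exactly the case at issue, since $Pf=Q$ with $P,Q\in\C(t)$ and $f\notin\C(t)$ forces $P=Q=0$ (the paper's own example $f=(t-a)^{1/2}$, $\phir(f)=S(t)f$, lives entirely in this branch). Your plan to ``dispose of'' the analogous degeneracy $\widehat P=0$ by running the same dichotomy on $\beta/f$ merely reproduces the identical situation one level down: $\beta/f$ satisfies the homogeneous equation $R'\,y(R)=y$, so the next elimination has the same unavoidable degenerate branch for non-rational solutions, and you give no termination argument for this regress. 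The paper cuts through precisely here using $\Psi$: any differentially algebraic solution of $R'\,y(R)=y$ must equal $c/\Psi$ for a constant $c$, and the differential transcendence of $\Psi$ forces $c=0$, hence $\beta=0$ and $f'/f=\alpha\in\C(t)$ at once (Lemma~\ref{lemma:mainthm-multiplicative}). Second, the step you call the heart of the argument --- that $\omega=\alpha\,dt$ has only simple poles and \emph{rational} residues --- is stated as an expectation, not proved; it is the actual content of Step~2 of the proof of Theorem~\ref{thmINTRO:main-multiplicative}, which uses the partial fraction decomposition $\frac{z'}{z}=\sum_i\frac{c_i}{t-a_i}+w'$, kills $w'$ via $\phir(w'\Psi_0)=w'\Psi_0$ together with $\C(t)\cap\C\Psi_0^{-1}=\{0\}$, and establishes rationality of the $c_i$ by a $\Q$-basis argument on residues --- again all resting on $\Psi$, with Lemma~\ref{lemma:mainthm-multiplicative} needed once more at the end to dispose of the multiplicative constant. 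As written, the proposal has no complete path to the conclusion.
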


\change{The weaker conclusion of the last theorem may be surprising, but we cannot hope for a stronger conclusion.
Indeed, if $R(t)=a+(t-a)S(t)^2$, with $a\in\C$ and $S(t)\in\C(t)$ chosen so that $(\mathcal{R})$ is satisfied, then
$y=(t-a)^{1/2}$ is a solution of $\phir(y)=S(t)y$, with coherent choice of square roots for $(t-a)$ and $S(t)^2$.
Moreover we notice that the statement of 
Theorem~\ref{thmINTRO:b=0} is compatible with the statement
of Theorem~\ref{thmINTRO:main-0}, 
indeed if $f^N\in\C(t)$ then $\frac{f'}{f}\in\C(t)$.}

\paragraph*{Complete trees.}
\label{sec:application-trees}
Rooted plane trees are fundamental objects in combinatorics and
computer science. We can prove the differential transcendency of the generating function for certain classes of trees
where the leaves are all at the same distance from the root; examples of such trees are in Fig.~\ref{fig:23trees}.
Rooted trees are directed from a single top vertex, or \emph{root}, down, and its children are ordered left to right. As we only consider finite trees, every path from the root ends at a vertex without descendants, which we call a \emph{leaf}.\footnote{The vocabulary is more intuitive if we view the tree upside down.} Here, the size of a tree is equal to the number of leaves that it has.

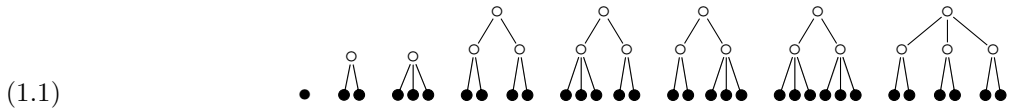
\begin{figure}[ht]
\begin{equation}
  \bullet\quad
\begin{tikzpicture} [style={fill,circle,inner sep=-.5pt, level distance=5mm,
  sibling distance=2mm}]
  \node {$\circ$}
         child {[fill] circle (2pt)}
         child {[fill] circle (2pt)};
\end{tikzpicture}\quad
    \begin{tikzpicture} [style={fill,circle,inner sep=-.5pt, level distance=5mm,
  sibling distance=2mm}]
  \node {$\circ$}
         child {[fill] circle (2pt)}
         child {[fill] circle (2pt)}
         child {[fill] circle (2pt)};
\end{tikzpicture}\quad
\begin{tikzpicture} [mytree]
  \node {$\circ$}
  child {node {$\circ$}
    child {[fill] circle (2pt)}
    child {[fill] circle (2pt)}{}}
  child {node {$\circ$}
    child {[fill] circle (2pt)}
    child {[fill] circle (2pt)}{}};
\end{tikzpicture}\quad
\begin{tikzpicture} [mytree]
  \node {$\circ$}
  child {node {$\circ$}
    child {[fill] circle (2pt)}
    child {[fill] circle (2pt)}
    child {[fill] circle (2pt)}{}}
  child {node {$\circ$}
    child {[fill] circle (2pt)}
    child {[fill] circle (2pt)}{}};
\end{tikzpicture}\quad
\begin{tikzpicture} [mytree]
  \node {$\circ$}
  child {node {$\circ$}
    child {[fill] circle (2pt)}
    child {[fill] circle (2pt)}{}}
  child {node {$\circ$}
     child {[fill] circle (2pt)}
    child {[fill] circle (2pt)}
    child {[fill] circle (2pt)}{}};
\end{tikzpicture}\quad
\begin{tikzpicture} [mytree]
  \node {$\circ$}
  child {node {$\circ$}
    child {[fill] circle (2pt)}
     child {[fill] circle (2pt)}
    child {[fill] circle (2pt)}{}}
  child {node {$\circ$}
     child {[fill] circle (2pt)}
    child {[fill] circle (2pt)}
    child {[fill] circle (2pt)}{}};
\end{tikzpicture}\quad
\begin{tikzpicture} [mytree]
  \node {$\circ$}
  child {node {$\circ$}
    child {[fill] circle (2pt)}
    child {[fill] circle (2pt)}{}}
   child {node {$\circ$}
    child {[fill] circle (2pt)}
    child {[fill] circle (2pt)}{}}
  child {node {$\circ$}
    child {[fill] circle (2pt)}
    child {[fill] circle (2pt)}{}};
\end{tikzpicture}
\end{equation}
\caption{All complete $\{2,3\}$-trees up to size 6}
\label{fig:23trees}
\end{figure}

Fix $\cS$, a set of positive integers each greater than 1.
The combinatorial class $\mathcal{T}$ of (unlabelled) $\cS$-trees is the set of rooted trees with the property that any vertex in the tree is either a leaf (with no descendants) or has $k$ children with $k$ from the set $\cS$. Were $\cS$ to contain 1, the class would contain an infinite number of objects of size 1 counter to the definition of combinatorial class.
\par
For example, every internal vertex in a  $\{2,3\}$-tree has either 2 or 3 children.  $\cS$-Trees defined in this way are extremely well studied from an enumerative perspective (see ~\cite[I.V.1]{flajolet_analytic_2009} and references therein) in part because the generating functions satisfy a polynomial equation
coming directly from a combinatorial recurrence:
Let $S(t)$ be the polynomial generating function for $\cS$:
$S(t)=\sum_{k\in\cS} t^k$ and $T(t)=\sum_{n=0}^\infty t_n t^n$, where
$t_n$ is the number of $\cS$-trees with $n$ leaves. Then,
\begin{equation}
T(t) = t+S(T(t)).
\end{equation}
For example, the generating function for the set of all $\{2,3\}$-trees enumerated by the number of leaves starts
  $T(t)=t+t^2+3t^3+O(t^4)$ and satisfies $T(t)=t+T(t)^2+T(t)^3$.

By adding a simple restriction, we find iterative equations.
A rooted $\cS$-tree is said to be \emph{complete} if it has the additional property that all leaves are at the same distance from the
root. Given an $\cS$-tree class $\mathcal{T}$, let $\mathcal{T}^c$ denote the sub-class of complete trees in $\mathcal{T}$,
where again we have omitted the dependence on $\mathcal{S}$ in the notation, for the sake of simplicity.
Flajolet and Sedgewick~\cite[Section I.6.2]{flajolet_analytic_2009} detail a substitution operator that permits a combinatorial description of the trees, and how to deduce a functional equation for the generating function of~$\mathcal{T}^c$.  The substitution operation is denoted here using square brackets and inside we indicate the substitution. Combinatorial substitution is reflected in the generating function with functional substitution.
The principle is best seen with an example. The class $\mathcal{T}^c$ of complete $\{2,3\}$-trees satisfies the recursive specification:
\begin{equation}\label{eq:23treecomb}
  \mathcal{T}^c\equiv \mathcal{\bullet} + \mathcal{T}^c\left[ \bullet\mapsto%
\begin{tikzpicture} [style={fill,circle,inner sep=-.5pt, level distance=5mm,
  sibling distance=2mm}]
  \node {$\circ$}
         child {[fill] circle (2pt)}
         child {[fill] circle (2pt)};
\end{tikzpicture}\, + \,
    \begin{tikzpicture} [style={fill,circle,inner sep=-.5pt, level distance=5mm,
  sibling distance=2mm}]
  \node {$\circ$}
         child {[fill] circle (2pt)}
         child {[fill] circle (2pt)}
         child {[fill] circle (2pt)};
\end{tikzpicture}\right].
\end{equation}
This combinatorial equation is read as every tree in $\mathcal{T}^c$ is isomorphic to either a single vertex, or a smaller tree in $\mathcal{T}^c$ where every leaf vertex is replaced with one of \begin{tikzpicture} [style={fill,circle,inner sep=-.5pt, level distance=5mm,
  sibling distance=2mm}]
  \node {$\circ$}
         child {[fill] circle (2pt)}
         child {[fill] circle (2pt)};
\end{tikzpicture}\, or \,
    \begin{tikzpicture} [style={fill,circle,inner sep=-.5pt, level distance=5mm,
  sibling distance=2mm}]
  \node {$\circ$}
         child {[fill] circle (2pt)}
         child {[fill] circle (2pt)}
         child {[fill] circle (2pt)};
\end{tikzpicture}.
Another way to view this is that the substitution generates a set of trees (of height one more) from a given tree. If that tree has~$n$ leaves, then the generating function for the set of trees (of height one more) that arise from all possible substitutions of its leaves is $(t^2+t^3)^n$. As there  $t^c_n$ trees with $n$ leaves, the generating function of all trees can be written
$\sum_{n\geq 0}t^c_n (t^2+t^3)^n$.
Remark,  the polynomial $t^2+t^3$ has no constant term, so the composition $T^c(t^2+t^3)$ makes sense if we interpret it as $T^c(t^2+t^3):=\sum_{n\geq 0}t^c_n (t^2+t^3)^n$. We conclude the generating function $T^c(t)$ for the counting sequence of complete $\{2,3\}$-trees satisfies the functional equation
\begin{equation}\label{eq:23treeeqn}
T^c(t)= t+ T^c(t^2+t^3),
\end{equation}
and has initial Taylor series expansion \[T^c(t)=t+t^2+t^3+t^4+2t^5+2t^6+O(t^7).\qquad(\text{OEIS} A014535)\]
Equation~\eqref{eq:23treeeqn} has an important consequence.
\begin{theorem}\label{thm:23tree-main}
The ordinary generating function for (unlabelled) complete rooted plane $\{2,3\}$-trees is differentially transcendental.
\end{theorem}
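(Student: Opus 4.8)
The plan is to recognise the functional equation~\eqref{eq:23treeeqn} as an instance covered by Theorem~\ref{thmINTRO:a=1} and then to rule out the rational alternative. Setting $R(t)=t^2+t^3$ and $b(t)=-t$, equation~\eqref{eq:23treeeqn} reads $T^c(R(t))=T^c(t)+b(t)$, which is exactly the shape $f(R(t))=f(t)+b(t)$ with $f=T^c\in\C[[t]]\subset\C((t))$. First I would check that $R$ satisfies $(\cR)$: it lies in $\C(t)$, $R(0)=0$, and $R'(0)=0$; moreover the valuation of the $n$-fold iterate $R^{\circ n}$ at the origin is $2^n$, so $R^{\circ n}(t)=t^{2^n}+\cdots$ is never equal to $t$ and no iterate of $R$ is the identity. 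Theorem~\ref{thmINTRO:a=1} then applies and yields the dichotomy: either $T^c\in\C(t)$, or $T^c$ is differentially transcendental over $\C(t)$. It therefore suffices to exclude the possibility that $T^c$ is a rational function.

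To do this I would exploit a nonzero fixed point of $R$. Solving $R(t)=t$ gives $t(t^2+t-1)=0$, so besides the origin there is the fixed point $\rho=\tfrac{\sqrt5-1}{2}\in(0,1)$. The first observation is that $\rho$ cannot be a point at which $T^c$ is finite: if $T^c$ took a finite value at $\rho$, evaluating the functional equation there would give $T^c(\rho)=\rho+T^c(R(\rho))=\rho+T^c(\rho)$, forcing the absurd identity $\rho=0$. Hence $\rho$ is a singularity of $T^c$. If $T^c$ were rational it would be meromorphic on all of $\C$, tending to a finite limit at every point that is not a pole, so $\rho$ would necessarily be a pole, of some order $k\ge1$.

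The heart of the argument is then to see that a pole at this repelling fixed point is incompatible with the functional equation. Writing $\lambda=R'(\rho)=2\rho+3\rho^2=3-\rho>1$ (using $\rho^2=1-\rho$), and noting $\lambda\neq0,1$, near $\rho$ one has $R(t)-\rho=\lambda(t-\rho)\bigl(1+O(t-\rho)\bigr)$. If $T^c(t)=C(t-\rho)^{-k}\bigl(1+o(1)\bigr)$ with $C\neq0$, then $T^c(R(t))=C\lambda^{-k}(t-\rho)^{-k}\bigl(1+o(1)\bigr)$, whence
\[
T^c(t)-T^c(R(t))=C\bigl(1-\lambda^{-k}\bigr)(t-\rho)^{-k}\bigl(1+o(1)\bigr).
\]
Since $\lambda>1$ we have $1-\lambda^{-k}\neq0$, so the left-hand side blows up as $t\to\rho$; but by the functional equation it equals $b(t)=-t$, which stays bounded near $\rho$. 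This contradiction shows that $T^c$ has no pole at $\rho$, hence is not rational, and the dichotomy of Theorem~\ref{thmINTRO:a=1} forces differential transcendence.

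I would expect the only delicate point to be the justification that rationality forces an honest pole at $\rho$ (rather than some milder singularity): this is precisely where one invokes that a rational function is meromorphic on $\C$ and takes a finite value at every non-pole. Once $\rho$ is known to be a pole, the leading-order cancellation computation above is entirely elementary, the key numerical input being simply that the multiplier $\lambda=R'(\rho)$ differs from $1$.
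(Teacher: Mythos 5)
Your proposal is correct, and its first half is exactly the paper's: both check $(\cR)$ (valuation of $R^{\circ n}$ is $2^n$, so no iterate is the identity) and invoke Theorem~\ref{thmINTRO:a=1} to reduce everything to excluding a rational solution. You part ways with the paper at that exclusion step. The paper's route (Corollary~\ref{cor:trees-chainsaw}) is purely algebraic: writing a rational solution as $U/V$ in lowest terms, it deduces that the denominator satisfies $\phir(V)=cV$, which is impossible for nonconstant $V$ since $\deg_t R\geq 2$; a degree count then rules out polynomial solutions, using $\deg_t b<\deg_t R$. Your route is local and dynamical: you exhibit the nonzero fixed point $\rho=\frac{\sqrt5-1}{2}$ of $R$, use $b(\rho)\neq 0$ to see that a rational $T^c$ would be forced to have a pole at $\rho$, and use the multiplier $\lambda=R'(\rho)=3-\rho>1$ (hence $1-\lambda^{-k}\neq 0$ for every $k\geq 1$) to see that a pole of any order $k$ is incompatible with $T^c(t)-T^c(R(t))=t$ remaining bounded near $\rho$. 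The argument is sound; the only blemishes are cosmetic: $T^c(t)-T^c(R(t))$ equals $-b(t)=t$ rather than $b(t)$ (irrelevant, since only boundedness is used), and the input you really need on the multiplier is not merely $\lambda\neq 1$ but $\lambda^k\neq 1$ for all $k\geq 1$, which your inequality $\lambda>1$ does deliver. As for what each approach buys: the paper's gcd-and-degree argument is uniform in $R$ and $b$, which is precisely why it disposes of all complete $\cS$-trees at once (Theorem~\ref{thm:tree-main}); your fixed-point argument must be re-instantiated for each $R$ (one needs a fixed point $\rho\neq 0$ whose multiplier is not a root of unity and at which $b$ is regular and nonzero), but in exchange it never uses that $R$ is a polynomial, so it would also apply to genuinely rational $R$, where the paper's degree bookkeeping is unavailable.
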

We prove this result using Theorem~\ref{thmINTRO:a=1}, or rather
its useful corollary, applied to the generating series associated with complete trees (see Corollary~\ref{cor:trees-chainsaw} below):

\begin{restatable}{cor}{maincortrees}\label{corINTRO:trees-chainsaw}
Let $R\in t^2\C[t]\setminus\{0\}$ and $b\in t\C[t]$, with $b\neq 0$ and $\deg_tb<\deg_tR$.
If there exists $f\in\C((t))$ such that $\phir(f)=f+b$, then $f$ is differentially transcendental over $\C(t)$.
\end{restatable}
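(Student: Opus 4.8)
The plan is to reduce the statement to Theorem~\ref{thmINTRO:a=1} and then to exclude the rational alternative by a dynamical analysis of the poles of a hypothetical rational solution. First I would check that $R$ satisfies $(\cR)$: since $R\in t^2\C[t]\setminus\{0\}$ we have $R\in\C(t)$, $R(0)=0$ and $R'(0)=0$ (the valuation $m:=\ord_0 R$ is at least $2$), so $R'(0)\in\{0,1,\text{roots of unity}\}$; moreover $\ord_0(R^{\circ n})=m^n\geq 2^n>1$ for every $n\geq 1$, so no iterate of $R$ is the identity. Thus Theorem~\ref{thmINTRO:a=1} applies to $\phir(f)=f+b$ and gives the dichotomy that either $f\in\C(t)$ or $f$ is differentially transcendental over $\C(t)$. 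It remains to exclude the first case under the hypotheses $b\neq 0$ and $\deg_t b<\deg_t R$.

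So suppose, for contradiction, that $f\in\C(t)$. I would first locate $f$ at the origin: writing $v=\ord_0 f$ and using $\ord_0(\phir(f))=mv$ while $\ord_0(f+b)=v$ (because $\ord_0 b\geq 1>v$ whenever $v<0$), the equation forces $mv=v$, which is impossible for $m\geq 2$ and $v<0$; hence $v\geq 0$ and $f$ is regular at $0$. Next I rule out that $f$ is a polynomial: if $\deg_t f=\delta$, then $\deg_t\phir(f)=\delta\deg_t R$ whereas $\deg_t(f+b)=\max(\delta,\deg_t b)$, and $\delta\deg_t R>\max(\delta,\deg_t b)$ for $\delta\geq 1$ (using $\deg_t R\geq 2$ and $\deg_t b<\deg_t R$), while $b\neq 0$ kills $\delta=0$; so no polynomial solution exists. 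Consequently $f$ must possess at least one finite pole.

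The heart of the argument, and the step I expect to be the main obstacle, is the analysis of the finite pole set $\mathcal P\subset\C$ of $f$. Since $b$ is a polynomial it has no finite poles, so comparing finite poles on both sides of $\phir(f)=f+b$ shows that the finite poles of $f\circ R$ coincide with those of $f$; as $R$ is a polynomial this reads $R^{-1}(\mathcal P)=\mathcal P$ as subsets of $\C$. Counting preimages with multiplicity, $\sum_{\alpha\in\mathcal P}e_\alpha(R)=(\deg_t R)\,|\mathcal P|$ while $R^{-1}(\mathcal P)=\mathcal P$ has $|\mathcal P|$ elements as a set, which forces every $\alpha\in\mathcal P$ to be totally ramified, $e_\alpha(R)=\deg_t R$. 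Riemann--Hurwitz for the polynomial $R:\mathbb P^1\to\mathbb P^1$ (whose finite critical points account for exactly $\deg_t R-1$ of the ramification, the point at infinity absorbing the rest) then permits at most one such totally ramified finite point, so $\mathcal P=\{\alpha_0\}$ and $R(t)=\alpha_0+c\,(t-\alpha_0)^{\deg_t R}$ with $R(\alpha_0)=\alpha_0$. But $0\notin\mathcal P$ gives $\alpha_0\neq 0$, whereas $R'(0)=c\,\deg_t R\,(-\alpha_0)^{\deg_t R-1}=0$ forces $\alpha_0=0$, a contradiction. Hence no rational solution exists and $f$ is differentially transcendental over $\C(t)$.
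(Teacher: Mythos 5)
Your argument is correct, and after the shared first move---checking $(\cR)$ and invoking Theorem~\ref{thmINTRO:a=1} to reduce everything to excluding a rational solution---it follows a genuinely different route from the paper's own proof of Corollary~\ref{cor:trees-chainsaw}. The paper stays purely algebraic: writing $f=U/V$ in lowest terms, it uses that $\phir$ maps polynomials to polynomials and preserves coprimality to deduce $\phir(V)=cV$, so that $\deg_t V\cdot\deg_t R=\deg_t V$ forces $V$ to be constant; a degree count on the polynomial $f$ then leaves only the case $\deg_t f=1$, which is killed by the explicit identity $R(t)=t+b/\alpha$ and the hypothesis $\deg_t b<\deg_t R$. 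You replace all of this by a dynamical analysis of the pole set $\mathcal{P}$ of a hypothetical rational solution: the invariance $R^{-1}(\mathcal{P})=\mathcal{P}$, the fiber count forcing every pole to be totally ramified, and the Riemann--Hurwitz count (equivalently, $\deg_t R'=\deg_t R-1$) forcing $\mathcal{P}=\{\alpha_0\}$ with $R(t)=\alpha_0+c(t-\alpha_0)^{\deg_t R}$ and $R(\alpha_0)=\alpha_0$, which is incompatible with $R'(0)=0$ once you have shown (via the valuation argument at $0$) that $f$ has no pole at the origin. What each approach buys: the paper's proof is shorter and needs nothing beyond gcd and degree bookkeeping; yours is more conceptual, in that it exhibits the precise obstruction---a rational solution could only exist if $R$ were affinely conjugate to a monomial with the unique pole at the fixed point, essentially the Mahler situation---which resonates with the B\"ottcher/Chebyshev trichotomy used elsewhere in the paper. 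Note also that the two proofs spend the hypothesis $\deg_t b<\deg_t R$ at different moments (you, when ruling out polynomial solutions of positive degree; the paper, in the terminal linear case), and that both rely essentially on $R$ being a polynomial (you: preimages of finite poles are finite poles; the paper: $\phir$ preserves $\C[t]$ and coprimality).
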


\par
Remark, for general complete $\cS$-trees, the argument is the same:
the generating function $T^c(t)=\sum_{n\geq 0}t^c_n t^n$ of $\mathcal{T}^c$ satisfies
\[
T^c(t)=t+T^c(S(t)).
\]
\begin{theorem}\label{thm:tree-main}
Let $\cS$ be a finite set of positive integers each greater than 1. Then the ordinary generating function for (unlabelled) complete rooted plane $\cS$-trees is differentially transcendental.
\end{theorem}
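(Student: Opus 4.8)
The plan is to derive the theorem as an immediate consequence of Corollary~\ref{corINTRO:trees-chainsaw}, after putting the functional equation for $T^c$ into the form required there. The excerpt has already recorded, via the Flajolet--Sedgewick substitution argument, that the generating function $T^c(t)=\sum_{n\ge 0}t^c_n t^n$ satisfies $T^c(t)=t+T^c(S(t))$ with $S(t)=\sum_{k\in\cS}t^k$. My first step is simply to rewrite this in operator form: setting $R:=S$, so that $\phir(T^c)=T^c(S(t))$, the equation becomes $\phir(T^c)=T^c-t$, which is $\phir(T^c)=T^c+b$ with $b:=-t$.

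The second step is to verify that the pair $R=S$, $b=-t$ satisfies every hypothesis of Corollary~\ref{corINTRO:trees-chainsaw}. Because $\cS$ is a nonempty finite set all of whose elements are at least $2$, the polynomial $S(t)=\sum_{k\in\cS}t^k$ has neither a constant nor a linear term, so $R=S\in t^2\C[t]\setminus\{0\}$; moreover $b=-t\in t\C[t]$ is nonzero and satisfies $\deg_t b=1<2\le\max\cS=\deg_t R$, giving the strict inequality $\deg_t b<\deg_t R$. The corollary then applies verbatim and yields that $T^c$ is differentially transcendental over $\C(t)$. The argument is uniform in $\cS$: it specializes to complete binary trees ($R=t^2$) and to the $\{2,3\}$-trees of Theorem~\ref{thm:23tree-main} ($R=t^2+t^3$), the only $\cS$-dependent data being the polynomial $S$, whose required shape is guaranteed by the standing assumption that every element of $\cS$ exceeds $1$.

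As for where the real content lies, essentially all of it has been front-loaded into Corollary~\ref{corINTRO:trees-chainsaw} (itself a specialization of Theorem~\ref{thmINTRO:a=1}), so I anticipate no serious obstacle in this reduction. It is worth recalling why the corollary is available: the hypothesis $R\in t^2\C[t]\setminus\{0\}$ forces $R(0)=0$ and $R'(0)=0$, and since $R$ vanishes to order at least $2$ at the origin its iterates vanish there to strictly increasing order, so no iterate of $R$ can equal the identity; hence assumption $(\cR)$ is automatic. The one genuinely delicate point, already dispatched inside the corollary, is the exclusion of the rational alternative permitted by Theorem~\ref{thmINTRO:a=1}, and this is exactly the role played by the hypotheses $b\neq 0$ and $\deg_t b<\deg_t R$; since both hold in our setting, the differentially transcendental conclusion is forced.
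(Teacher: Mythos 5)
Your proposal is correct and is essentially the paper's own proof: the paper likewise observes that $T^c(t)=t+T^c(S(t))$, i.e.\ $\phir(T^c)=T^c+b$ with $R=S$ and $b=-t$, and then invokes Corollary~\ref{corINTRO:trees-chainsaw}, whose hypotheses ($R\in t^2\C[t]\setminus\{0\}$, $b\in t\C[t]$ nonzero, $\deg_t b<\deg_t R$) hold exactly as you verified since every element of $\cS$ is at least $2$. Your added remark that these hypotheses force assumption $(\cR)$ is a correct and welcome explication of why the corollary is applicable, but it does not change the route.
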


 For example, for any~$m$, the set of complete $\{\lceil m/2\rceil, \dots, m\}$-trees is the well known class of \emph{B-trees of order $m$}~\cite{bayer_organization_1970}. By Theorem~\ref{thm:tree-main}, the generating function for B-trees of order $m$ is differentially transcendental.  Recall a $\cS$-tree class is only well defined if $\cS$ does not contain~1, so the restriction in Corollary~\ref{corINTRO:trees-chainsaw} is quite natural.

 A more general result is also true since the ordinary generating function of any class of complete $\cS$-trees is either differentially transcendental or rational by Theorem~\ref{thmINTRO:a=1}.
 Under some weak constraints on $\cS$, the asymptotic analyses of Odlyzko~\cite{Odlyzko_periodic_1982}  and de Bruijn~\cite{de_bruijn_asymptotic_1979} can be used to show the generating function is not rational.

 \begin{remark}
 Note the generating function $T(t)$ for all $\cS$-trees and the generating function $T^c(t)$ for complete $S$-trees satisfy equations that bear a superficial similarity:
$T(t) = t+ S(T(t))$ vs. $T^c(t) = t + T^c(S(t))$. However, the first is algebraic and the second differentially transcendental! Are there other examples of classically algebraic objects for which the addition of a simple condition changes the nature of the generating function in such a striking manner?
\end{remark}
\paragraph*{Random walks on self-similar graphs.}
Our second family of discrete objects concerns walks on self-similar graphs. We present the results for Sierpi\'nski graph, but similar conclusions are true for the entire class of \emph{symmetric self-similar graphs}, as described by B\"ohn and Teufl~\cite{kron_asymptotics_2004}.

The Sierpi\'nski graph results from a fractal generating process starting with a single line, iteratively rewritten and rescaled in particular way.
More precisely, one starts with a unit line, $S_0 = \begin{tikzpicture}
\draw [black, fill=white]
[l-system={Sierpinski triangle, step=12pt, angle=-120, axiom=X, order=0}]
lindenmayer system;
\end{tikzpicture}$ and applies the following replacement rule:
\begin{equation}
\begin{tikzpicture}
\draw [black, fill=white]
[l-system={Sierpinski triangle, step=12pt, angle=-120, axiom=X, order=0}]
lindenmayer system;
\end{tikzpicture}\quad\mapsto
\begin{tikzpicture}
\draw [black, fill=white]
[l-system={Sierpinski triangle, step=12pt, angle=-120, axiom=X, order=1}]
lindenmayer system;
\end{tikzpicture}
\end{equation} in an iterated process. Figure~\ref{fig:Sir-iteration} demonstrates the first few iterates. The \emph{Sierpi\'nski graph} is the limit of this process.
\begin{figure}[htbp]
    \centering
$S_0=$
\begin{tikzpicture}
\draw [black, fill=white]
[l-system={Sierpinski triangle, step=3pt, angle=-120, axiom=X, order=0}]
lindenmayer system;
\end{tikzpicture} \quad
 $S_1=$
\begin{tikzpicture}
\draw [black, fill=white]
[l-system={Sierpinski triangle, step=3pt, angle=-120, axiom=X, order=1}]
lindenmayer system;
\end{tikzpicture} \quad
 $S_2=$
\begin{tikzpicture}
\draw [black, fill=white]
[l-system={Sierpinski triangle, step=3pt, angle=-120, axiom=X, order=2}]
lindenmayer system;
\end{tikzpicture} \quad
   $S_3=$
\begin{tikzpicture}
\draw [black, fill=white]
[l-system={Sierpinski triangle, step=3pt, angle=-120, axiom=X, order=3}]
lindenmayer system;
\end{tikzpicture} \quad
 $S_4=$
\begin{tikzpicture}
\draw [black, fill=white]
[l-system={Sierpinski triangle, step=3pt, angle=-120, axiom=X, order=4}]
lindenmayer system;
\end{tikzpicture}
    \caption{Initial iterates defining the Sierpi\'nski graph. }
    \label{fig:Sir-iteration}
\end{figure}
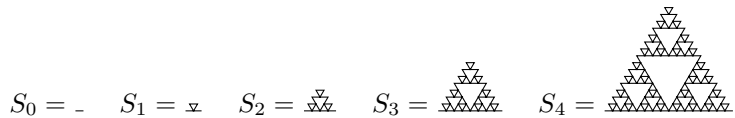

The \emph{Green function of a graph} is a probability generating function which describes the $n$-step displacement starting and returning to a certain origin vertex. The Green function of symmetric self-similar graphs satisfy homogeneous iterative equations,
\[
G(R(t))=a(t)G(t)
\]
with algebraic (often rational) $R$ and rational $a$.  Roughly, the substitution $t\mapsto R(t)$ has a combinatorial interpretation reflecting the self-similarity of the graph \cite{kron_asymptotics_2004}. As the graph is 4-regular, $G(4t)$ is the generating function for walks that begin and end at the origin on the Sierpi\'{n}ski
graph. These walks are also known as \emph{excursions} on the graph. The series begins:
\begin{equation*}
G(4t)=1+4\, t^{2}+4\, t^{3}+32\, t^{4}+76 t^{5}+348\, t^{6}+1112\, t^{7}+O(t^8).
\end{equation*}
Figure~\ref{fig:Sgraph} illustrates an example excursion.
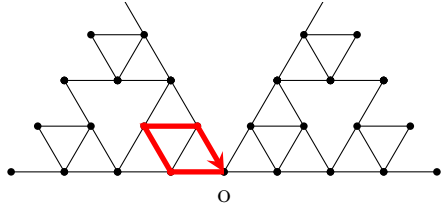
\begin{figure}[h]
    \centering
\begin{tikzpicture}[scale=.7]
\clip (-0.5,-0.6) rectangle (8.1,3.2);

    \draw  (0,0) [fill=white]
    [l-system={Sierpinski triangle, axiom=X, step=\trianglewidth/(2^3), order=3, angle=-120}]
    lindenmayer system;;
   \def\nx{7}
   \def\ny{3} \pgfmathsetmacro\nyy{(2+2*\ny)*sin(60)}
     \foreach \j in {0,...,\ny} {
            \foreach \i in {\j,...,\nx} {
                \path (0:\i) ++(60:\j) ++(120:\j) node[d] {} --++(60:2) node[d] {} --++(-1,0) node[d] {} --++(-60:1) node[d] {} --++(-60:1) node[d] {};
            }
        }
     \path (0:3) ++(60:2)node[g] {} ++(60:1) node[g] {} --++(-1,0)node[g] {};
     \path (0:1) ++(120:2)node[g] {} ++(60:2) node[g] {};
     \path (0:1) ++(120:2)node[g] {} ++(60:2) node[g] {};
      \path (0:7) ++(60:2)node[g] {} ++(120:1) node[g] {} ++(120:1) node[g] {} ++(1,0) node[g] {};
   \draw[red, line width=2pt, -stealth] (0:4) --++(-1,0) node[f] {} --++(120:1) node[f] {} --++(1,0) node[f] {} --++(-60:1) node[] {};
\node[label=below:o] at (0:4) {};
\end{tikzpicture}
    \caption{A close up on the origin (labelled o) of the Sierpinski Graph. The (red) path in bold is one of the 32 excursions of length 4.}
    \label{fig:Sgraph}
\end{figure}


Grabner and Woess~\cite[Proposition~1]{grabner_functional_1997} proved that the Green function~$G(t)$ for walks that return to their origin on the Sierpi\'{n}ski graph satisfies the functional equation
\begin{equation}\label{eq:green}
   G\left(\frac{t^2}{4-3t}\right)= \frac{(2+t)(4-3t)}{(4+t)(2-t)}\,G(t).
\end{equation}
We apply Theorem~\ref{thmINTRO:b=0} to $G(t)$.
\begin{theorem}\label{thm:green}
The Green function $G(t)$ of walks that start and end at the origin on the infinite Sierpi\'{n}ski graph is differentially transcendental over $\mathbb{C}(t)$.
\end{theorem}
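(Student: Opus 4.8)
The plan is to apply Theorem~\ref{thmINTRO:b=0} directly to the functional equation~\eqref{eq:green} satisfied by the Green function $G(t)$. First I would verify that the rational map
\[
R(t)=\frac{t^2}{4-3t}
\]
satisfies assumption $(\cR)$. Since $R(0)=0$ and $R(t)\in t^2\C[[t]]$, we have $R'(0)=0$, which is one of the admissible values in $(\cR)$. The remaining condition to check is that no iterate of $R$ equals the identity. This is immediate here: because $R$ has a zero of order $2$ at the origin, each iterate $R^{\circ n}$ has a zero of order $2^n$ at the origin, so $R^{\circ n}(t)=t+O(t^2)$ fails for every $n\geq 1$, and in particular $R^{\circ n}\neq\mathrm{id}$. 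Thus $(\cR)$ holds.

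Next I would read off the multiplier $a(t)$. Equation~\eqref{eq:green} is of the form $G(R(t))=a(t)G(t)$ with
\[
a(t)=\frac{(2+t)(4-3t)}{(4+t)(2-t)}\in\C(t),
\]
so Theorem~\ref{thmINTRO:b=0} applies verbatim and yields the dichotomy: either $G$ is algebraic over $\C(t)$ with $G^N\in\C(t)$ for some positive integer $N$, or $G$ is differentially transcendental over $\C(t)$. To finish, I must rule out the algebraic alternative. The natural way is to argue via the coefficient asymptotics or the singularity structure of $G$: the Green function of excursions on a self-similar graph is known to have a natural boundary or, equivalently, infinitely many singularities accumulating from the iterated pre-images of the branch structure of $R$, which is incompatible with $G^N$ being a rational function. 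Indeed, if $G^N=P/Q$ were rational, then $G$ would have only finitely many singularities, whereas the functional equation forces singularities of $G$ to propagate backward under $R$, producing infinitely many.

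The main obstacle I anticipate is precisely excluding the algebraic case cleanly. A slick alternative that avoids delicate analytic estimates is to exploit the multiplier $a(t)$ itself: if $G^N\in\C(t)$, then $N\,G'/G=(G^N)'/G^N\in\C(t)$, so $G'/G\in\C(t)$; differentiating the logarithm of the functional equation gives
\[
R'(t)\,\frac{G'(R(t))}{G(R(t))}=\frac{a'(t)}{a(t)}+\frac{G'(t)}{G(t)},
\]
which constrains $G'/G$ to be a rational solution of a specific twisted functional equation. One then checks that the resulting first-order linear relation is incompatible with the explicit $a(t)$ and $R(t)$ above—concretely, by comparing orders of vanishing or residues at $t=0$ and at the poles of $a$, no rational $G'/G$ can satisfy it, forcing the differentially transcendental branch. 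Either route works; the residue/valuation bookkeeping in this last step is the part requiring care, though it is elementary once set up.
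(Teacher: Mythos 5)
Your first step coincides exactly with the paper's: you verify $(\cR)$ for $R(t)=t^2/(4-3t)$ (the observation that $R^{\circ n}$ has a zero of order $2^n$ at the origin, so no iterate is the identity, is correct), and you invoke Theorem~\ref{thmINTRO:b=0} on Equation~\eqref{eq:green} to obtain the dichotomy ``$G$ algebraic with $G^N\in\C(t)$, or $G$ differentially transcendental''. The gap is in how you exclude the algebraic branch, which is the only substantive step. The paper does it by citation: Grabner and Woess~\cite{grabner_functional_1997} prove that $[t^n]G(t)$ grows like $n^{-\log 3/\log 5}F(\log n/\log 5)$ with $F$ nonconstant periodic, and since coefficients of algebraic functions have asymptotics with \emph{rational} exponents (\cite[Theorem~VII.8]{flajolet_analytic_2009}), the irrationality of $\log 3/\log 5$ finishes the proof. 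Your route (a) instead asserts that $G$ is ``known'' to have a natural boundary, equivalently infinitely many singularities; no such statement is cited, and your justification (singularities ``propagate backward under $R$'') is not automatic: it requires analytically continuing $G$ along backward orbits of $R$, precisely the kind of delicate argument one is trying to avoid. As written, route (a) replaces the theorem by an unproved claim.

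Your route (b) is a genuinely different, purely algebraic idea, and it \emph{can} be completed --- but not by the local checks you propose (``residues at $t=0$ and at the poles of $a$''); the contradiction lives elsewhere. Here is the missing argument. If $G^N=u\in\C(t)$, then $u(R(t))=a(t)^Nu(t)$, i.e.\ $R^*\operatorname{div}(u)-\operatorname{div}(u)=N\bigl([-2]+[4/3]-[-4]-[2]\bigr)$, since the zeros of $a$ are $-2,4/3$ and its poles are $-4,2$. Comparing coefficients at a non-critical point $q$ outside $\{\pm2,4/3,-4\}$ gives $n_q=n_{R(q)}$, where $n_p$ denotes the coefficient of $\operatorname{div}(u)$ at $p$. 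Now $R(-2)=2/5$ and the forward orbit $2/5\mapsto 2/35\mapsto\cdots$ consists of infinitely many distinct points (for $0<t<1$ one has $R(t)<t^2<t$), so finiteness of $\operatorname{div}(u)$ forces $n_{2/5}=0$, whence the relation at $q=-2$ gives $n_{-2}=-N\neq 0$. But $R(4)=-2$, and $4$ is neither a zero nor a pole of $a$ nor a critical point of $R$, so $n_4=n_{-2}=-N\neq 0$; and this nonzero coefficient propagates along the entire backward orbit of $4$, which is infinite (the only exceptional point of $R$ is $0$) and avoids the finitely many points $\{\pm 2, 4/3,-4,0,8/3,\infty\}$ where propagation could stop, because none of their forward orbits ever reaches $4$. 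This contradicts the finiteness of $\operatorname{div}(u)$. So the essential ingredient is a global dynamical statement about backward orbits of $R$, not residue bookkeeping at $t=0$ and at the poles of $a$; with it, your route (b) yields a self-contained proof that avoids the Grabner--Woess asymptotics, and without it the algebraic case remains open. Either supply this orbit argument, or quote the asymptotics as the paper does.
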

\par
In light of Equation~\eqref{eq:green}, to prove Theorem~\ref{thm:green}, it suffices to show that $G(t)$ is not algebraic. Grabner and Woess in \emph{loc.cit.} show that the coefficient of $t^n$ in $G(t)$ grows asymptotically like \[n^{-\log 3/\log 5} F (\log n/\log 5)\] as $n$ goes to infinity, for some nonconstant periodic function $F$. The constant $-\log 3/\log 5$ is related to the fractal dimension of the underlying structure. Since the exponent of~$n$ is not rational,~$G(t)$ is not algebraic (see \cite[Theorem~VII.8]{flajolet_analytic_2009}), hence it is differentially transcendental.

One can apply Theorem~\ref{thmINTRO:b=0} to deduce the differential transcendence of the Green functions of excursions on other self similar graphs. The step of excluding the case of an algebraic generating function may follow from results of Teufl~\cite{teufl_average_2003} on the coefficient asymptotics, particularly when paired
with~\cite[Theorem~VII.8]{flajolet_analytic_2009}.

\paragraph*{Pattern avoiding permutations.}
The Noonan-Zeilberger conjecture~\cite{noonan_enumeration_1996} posited that the set of permutations avoiding  a fixed set of patterns should have a D-finite generating function, that is, that the generating function should satisfy a linear differential equation with rational coefficients.  Twenty years later, after much activity on the problem,  Garrabrant and Pak~\cite{garrabrant_permutation_2016} finally disproved it showing in a proof that the generating function of permutations avoiding a particular set of 30000 patterns was not D-finite. The study of consecutive pattern avoidance (defined below) has a slightly different flavour, but is also a good candidate for a systematic analysis, and indeed Elizalde and Noy~\cite{elizalde_clusters_2012} comprehensively classified the nature of the generating function for all small patterns to length 4. Permutations avoiding consecutive patterns can naturally describe bases of shuffle algebras with monomial relations~\cite{dotsenko_shuffle_2013}.
Most permutations avoid a consecutive pattern and consequently, it turns out to be better to encode enumerative data using the  \emph{exponential generating functions (EGF)},
i.e., we associate to our counting sequence~$(f_n)_{n=0}^\infty$
the series $\widehat{f}(t)=\sum_{n\geq 0} \frac{f_n}{n!}t^n$. To avoid confusion when were are talking about both EGF and OGF, the carat identifies the EGF of a sequence. A non-D-finite example was found quickly, and although it had been conjectured that in fact the reciprocal should always be D-finite, Elizalde and Noy gave strong evidence that the reciprocal of the EGF for the class of permutations avoiding the consecutive pattern $1432$ was not D-finite, an example we consider now.

A permutation $\sigma\in \mathfrak{S}_n$ is said to avoid the consecutive pattern 1423 if there is no $1\leq i\leq n-4$ such that $\sigma(i)<\sigma(i+4)<\sigma(i+2)<\sigma(i+3)$. Let $\widehat{P}(t)$ be the exponential generating function for permutations that avoid the consecutive pattern 1423. (OEISA201692). Elizalde and Noy~\cite{elizalde_clusters_2012} determined the following system of equations for the EGF $\widehat{P}(t)$:
\begin{equation}
    \widehat{P}(t)=\frac{1}{2-\widehat{S}(t)}\quad \text{ such that } \quad S(t)=S\left(\frac{t}{1+t^2}\right)\frac{t}{1+t}+1.
\end{equation}
 The non-D-finiteness of $S(t)$ was subsequently proved by Beaton, Conway and Guttmann in~\cite{beaton_consecutive_2017}
 who showed that an explicit solution to the functional equation had an infinite number of singularities. Theorem~\ref{thmINTRO:main-0} above,
 with $R(t)=\frac{t}{1+t^2}$ gives a potentially simpler path to establish that $S(t)$ is not D-finite
 (and indeed the even stronger conclusion that it is differentially transcendental) since you
 would just need to show that $S(t)$ is not solution of an inhomogeneous linear differential equation of order $1$. 
 As it is, as Beaton \emph{et al.} did establish
 that $S(t)$ is not D-finite, and hence we can conclude by Theorem~\ref{thmINTRO:main-0} that it is differentially transcendental.
 Regrettably, even with this stronger result we do not have enough information to use this to say something about $\widehat{P}(t)$ since we cannot conclude that $\widehat{S}(t)$ is differentially transcendental from the fact that $S(t)$ is differentially transcendental.\footnote{Indeed, there is a tempting conjecture of Pak and Yeliussizov~\cite{pak_complexity_2018} that would imply that for any integer sequence, if both the ordinary and exponential generating functions are differentially algebraic then actually, they are both  D-finite. Perhaps consecutive pattern avoiding permutations can be useful to support the result or find counter-examples.}
\par
Beaton, Conway and Guttmann~\cite{beaton_consecutive_2017} extended their analysis to an entire family. Let $\widehat{P_m}$ be the EGF for permutations that avoid the consecutive pattern $1m23\dots (m-2)(m-1)$ for any $m\geq 4$. Then,
\begin{equation}
    \widehat{P_m}(t)=\frac{1}{2-\widehat{S_m}(t)}\quad \text{ such that } \quad S_m(t)=S_m\left(\frac{t}{1+t^{m-2}}\right)\frac{t}{1+t}+1.
\end{equation}
They deduce that for any $m$, as $S_m(t)$ is not D-finite, the reciprocal of the generating function  $\widehat{P_m}(t)$ is also not D-finite. Applying Theorem~\ref{thmINTRO:main-0}, we conclude that for every $m$,  $S_m(t)$ is furthermore differentially transcendental.

\par
As a partial result towards  the proof of Theorem~\ref{thmINTRO:main-0}, we obtain (see Corollary~\ref{cor:key-Galois-theory-bis} below):

\begin{restatable}{mainthm}{mainG}
\label{thmINTRO:main-R-algebraic}
We assume that $R$ satisfies $(\cR)$.
Let us consider an equation of the form $\phir(y)=ay+b$, with $R,a,b\in\C(t)$ and $a,b\neq 0$.
Let $f\in  \C((t))$
satisfy the equation $\phir(f)=af+b$.
If the functional equation $\phir(y)=ay$ does not have a
nonzero algebraic solution, then either $f\in\C(t)$ or
$f$ is differentially transcendental over $\C(t)$.
\end{restatable}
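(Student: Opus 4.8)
The plan is to read the statement as a dichotomy and prove it case by case, the engine being the difference Galois theory of the endomorphism $\phir$; the target is to show that whenever $f$ is not differentially transcendental it must in fact be rational. I first record the structural setup. Since no iterate of $R$ is the identity (part of $(\cR)$), the only $R$-invariant rational functions are the constants, so $(\C(t),\phir)$ is a difference field with field of invariants $\C$. The inhomogeneous equation encodes an extension of difference modules $0\to M_a\to M\to\mathbf 1\to 0$, where $M_a$ is the rank-one module attached to $\phir(y)=ay$ and $M$ is the rank-two module with matrix $\left(\begin{smallmatrix}a&b\\0&1\end{smallmatrix}\right)$, of which $(f,1)$ is a solution vector. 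I also record the twisted commutation $\partial\circ\phir=R'\cdot(\phir\circ\partial)$ coming from the chain rule, which governs the interaction between $\phir$ and differentiation. The classical difference Galois group $H$ of $M$ sits inside $\mathbb{G}_m\ltimes\mathbb{G}_a$; since $\phir(y)=ay$ has no nonzero algebraic solution, the Galois group of $M_a$ is the full torus $\mathbb{G}_m$, so $H$ surjects onto $\mathbb{G}_m$. As the torus acts on $\mathbb{G}_a$ with nonzero weight, the only closed subgroups surjecting onto $\mathbb{G}_m$ are the full group and the sections conjugate to $\mathbb{G}_m$. Thus either the extension is non-split over $\C(t)$, or there exists $g\in\C(t)$ with $\phir(g)=ag+b$.

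The split case is immediate. If such a rational $g$ exists, then $h:=f-g$ solves the homogeneous equation $\phir(h)=ah$. By the homogeneous dichotomy of Theorem~\ref{thmINTRO:b=0}, together with the hypothesis that $\phir(y)=ay$ has no nonzero algebraic solution, $h$ is either $0$, giving $f=g\in\C(t)$, or differentially transcendental, giving that $f=g+h$ is differentially transcendental. In both cases the conclusion of the theorem holds, and this alternative is compatible with $f$ being differentially algebraic only when $f\in\C(t)$.

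The substance lies in the non-split case, where no rational solution exists and the goal becomes to prove that $f$ is differentially transcendental; note that here $f\notin\C(t)$, since a rational $f$ would split the extension. I would argue by contradiction, assuming $f$ differentially algebraic, and pass to the parametrized difference Galois theory. For this one replaces $\partial$ by a $\phir$-invariant derivation $D=u\,\partial$, that is, a solution of the auxiliary equation $\phir(u)=R'u$, so that $\phir$ and $D$ commute and $M$ acquires a parametrized Galois group $G$, a Kolchin-closed subgroup of $\mathbb{G}_m\ltimes\mathbb{G}_a$ that is Zariski-dense in $H=\mathbb{G}_m\ltimes\mathbb{G}_a$. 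Because Theorem~\ref{thmINTRO:b=0} forces the homogeneous solution to be differentially transcendental, the torus direction already contributes maximal differential transcendence, so differential algebraicity of $f$ must come from a degeneration in the unipotent direction: a nonzero linear differential relation, over the constants, satisfied by $f$ modulo the homogeneous solution. Unwinding the parametrized correspondence, such a relation is exactly a condition asserting that a nonzero differential combination of the inhomogeneity $b$ is $\phir$-summable in $\C(t)$, of the form $\phir(w)-w$ (after the renormalization dictated by $a$) for some $w\in\C(t)$.

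The hard part, and the point where the dynamics of $R$ is indispensable, is to rule out such a summability relation unless it degenerates to a genuine rational solution of the inhomogeneous equation, which would contradict non-splitness. I would analyse the hypothetical rational certificate $w$ along the forward and backward $R$-orbit of a generic pole: because $0$ is a fixed point with multiplier $R'(0)\in\{0,1,\text{roots of unity}\}$ and no iterate of $R$ is the identity, a generic point has an infinite, non-periodic orbit, so a pole of $w$ cannot be cancelled along its orbit and the telescoping equation admits no rational solution. Two technical issues must be settled within this step: the existence and shape of the invariant derivation $D$, since $\phir(u)=R'u$ has a formal solution when $R'(0)$ is a root of unity but none in the super-attracting case $R'(0)=0$, which must be treated separately through the local normal form of $R$ at the fixed point; and a careful control of orders of vanishing at $0$ under iteration. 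This local, dynamical non-telescoping lemma is the main obstacle; once it is in place the unipotent part $G\cap\mathbb{G}_a$ is all of $\mathbb{G}_a$, so $f$ is differentially transcendental, and the dichotomy is complete.
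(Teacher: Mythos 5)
Your split case is correct, and reading the hypothesis ``no nonzero algebraic solution of $\phir(y)=ay$'' as saying the Galois group of the homogeneous part is all of $\mathbb{G}_m$ is consistent with the paper. But the heart of your non-split case has a genuine gap, in two respects. First, the telescoping reduction is mislocated. The Hardouin--Singer criterion (the paper's Proposition~\ref{prop:DiffAlgHanoi}) applies to equations of the form $\phir(y)=y+b$, and only over a base field stable under the \emph{commuting} derivation $\partial=\Psi\frac{d}{dt}$; such a field must contain $\Psi$ and all its derivatives, i.e.\ it is $\K$ (or $\Kalg_C$ after the necessary extensions), not $\C(t)$. So the certificate $w$ in your summability relation lives in a differentially transcendental extension of $\C(t)$. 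Worse, to put $\phir(y)=ay+b$ into unit-coefficient form you must divide by a solution of $\phir(y)=ay$, which by hypothesis is \emph{not} algebraic, so the renormalized inhomogeneity is not rational either. Your proposed dynamical lemma --- tracking poles of a rational $w$ along infinite non-periodic $R$-orbits --- therefore has nothing to grab onto: the identity it is meant to contradict is not an identity between rational functions. It also aims at the wrong target: when $f$ is differentially algebraic, telescoping relations genuinely exist; the paper never rules them out but exploits them (in Theorems~\ref{thmINTRO:main-additive} and~\ref{thmINTRO:main-multiplicative}), extracting rational data from the certificate via the algebraic independence of $\Psi,\Psi',\dots$ and then a residue computation.

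Second, you are missing the step that actually transfers differential algebraicity of $f$ to the homogeneous equation, where the hypothesis can be applied. This is the paper's Proposition~\ref{prop:key-Galois-theory}: extend the constants to a differential closure $C$ of $\C$ (without this the parameterized Galois group may have too few elements to move anything), pass to Puiseux series so that $\phir$ becomes an automorphism, verify that $f\notin\Kalg_C$, and then pick a parameterized Galois automorphism $\phi$ with $\phi(f)\neq f$. Since $\phi$ fixes $\Psi$ and hence commutes with $\frac{d}{dt}$, the element $z:=f-\phi(f)$ is a \emph{nonzero, differentially algebraic} solution of $\phir(y)=ay$; Proposition~\ref{prop:maint-multiplicative-ALG} then gives $\frac{z'}{z}\in\bK$, and the residue/minimal-polynomial arguments (Lemma~\ref{lemma:rat-2}, Theorem~\ref{thmINTRO:b=0}) upgrade this to an algebraic solution of $\phir(y)=ay$ --- contradicting the hypothesis directly. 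Without this ``difference of Galois conjugates'' construction your argument has no bridge between the assumed differential algebraicity of $f$ and the hypothesis on the homogeneous equation. Finally, your sketch leaves untreated the case where the solution of $\phir(u)=R'u$ is differentially \emph{algebraic} (i.e.\ $R$ conjugate to $t^d$ or $\pm T_d$ by Theorem~\ref{thm:Becker-Bergweiler}); there the parameterized machinery built on $\Psi$ is unavailable, and the paper instead reduces to known results on Mahler equations.
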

It follows that  one could consider to prove the $D$-finiteness of $S_m(t)$, with $m\geq 4$, proving that it is not a rational function and that the functional equation
    \[
    y\left(\frac{t}{1+t^{m-2}}\right)=\frac{1+t}{t} y(t)
    \]
does not have a nonzero algebraic solution.

\subsection{A summary of the main theoretical results}
\label{sec:intro-theoretical-results}
The interactions between Galois theory of functional equations and enumerative combinatorics has produced spectacular results in the last few years and we are most likely far from having explored all the possible applications and interaction. One potential barrier to a more steady flow of collaborations is the relatively opposing perspectives towards mathematics between the two communities.
Indeed, while generally in enumerative combinatorics algebraic abstraction comes into the picture only when strictly necessary, in the Galois theory of functional equations one frequently finds theorems that are stated over very general fields. These results may actually apply to very concrete situations, yet the novice user can find the theorem formulations daunting.
For this reason, we have made the choice to try to write the functional theoretical part of the paper in a ``bottom-up'' spirit.  We have tried to introduce an abstract point of view as late as possible, to clarify where and why a more sophisticated setting is necessary in order to be accessible to a larger audience of readers.
\par
Our main purpose is to prove Theorem~\ref{thmINTRO:main-0} (as well as Theorem~\ref{thmINTRO:a=1} and Theorem~\ref{thmINTRO:b=0}),
that we recall here:

\mainzero*

These statements are related to a collection of results in the literature that establish a dichotomy ``algebraic vs differentially trascendental'' for the following kind of functional equations:
\begin{enumerate}
    \item If $R(t)=\frac{t}{t+1}$ then the result is proved in \cite[Theorem~2]{Nishioka_note_1984}.
    Notice that in \emph{loc.cit.} the author writes the operator using the variable $s=\frac{1}{t}$, so that
    $\phir(f(s))=f(s+1)$.
    The case $R(t)=\frac{t}{t+1}$ is considered also in~\cite{bostan_differential_2020},
    where the authors establish the differential transcendence of solutions
    over the germs of meromorphic functions at zero, inspired by the example of the generating
    series of Bell numbers in~\cite{klazar_bell_2003}, and make a list of generating functions coming from combinatorics
    which satisfy functional equations associated with such an $R$.
    \item
    In the case $R(t)=t^d$, with $d\geq 2$, i.e., in the so-called Mahler case, the result above is proved in
    \cite[Theorem~3]{Nishioka_note_1984}, \cite[page~22]{rande_mahler_1992} and in \cite{Nguyen_Hypertranscedance_2011}. A well-known application is the generating function of the celebrated Thu\"e-Morse sequence (OEISA010060) which satisfies an order one Mahler equation with $d=2$. It is not rational, hence it is differentially transcendental over $\mathbb{C}(t)$.
\end{enumerate}
The case $R(t)=qt$, with $q\in\C$, $q\neq 0,1$, and not a root of unity,
    is proved in \cite[Theorem~1.2]{Ishizaki_Hypertranscendency_1998} and \cite[Theorem~2]{Ogawara_Differential_2014}.
    The classification of walks in the quarter plane is the main example from combinatorics to which these results have been applied.
    See \cite{dreyfus_nature_2018}.
Notice that we do not cover the case of a general nonzero~$R'(0)$, different from a root of unity, which is most likely true, but more complicated to unravel. We haven't found any functional equation coming from combinatorics  associated to a rational function~$R$ of this form and for this reason we have left this case aside for the time being. We will comment again on this precise issue.
\par
The papers \cite{ramis_about_1992}, \cite{bezivin_solutions_1993} and \cite{schafke_consistent_2019}
are a first step toward the generalization of the results above: in them the authors prove the
rationality or the algebraicity of solutions of systems of linear functional equations of different natures.
They consider situations that are quite similar to the conclusion of Theorem~\ref{thmINTRO:main-0}, namely the fact that
if $f$ is differentially algebraic than it is solution of the system
    \[\phir(f)=af+b, ~f'=\alpha f+\beta.\]
In \emph{loc. cit.}, the authors prove that, when $R(t)\in\{t+1, qt, t^d\}$, a solution of the system above is necessarily algebraic, and even rational in some cases.
Therefore, it is quite natural to ask whether a similar result holds for a general $R(t)$.

\par
The first Galoisian approach to these kind of statements is developed in \cite{hardouin_hypertranscendance_2008},
which has inspired different parameterized Galois theories leading to similar statements in different settings.
See \cite[\S3.1]{hardouin_differential_2008}, \cite[\S3]{di_vizio_difference_2017}, \cite[\S3]{ovchinnikov_galois_2015}.
The main result of \cite{adamczewski_hypertranscendence_2021} generalizes
the results obtained for functional equations of the form $\phir(y)=ay+b$, when $R(t)=t+1,\frac{t}{1+t},qt,t^d$ to higher order functional equations.

\paragraph*{Proof of Theorem~\ref{thmINTRO:a=1} and Theorem~\ref{thmINTRO:b=0}}
In \S\ref{sec:mainresults-1}, we start proving the following statement:

\mainA*

The aforementioned Corollary~\ref{corINTRO:trees-chainsaw} is derived from
Theorem~\ref{thmINTRO:a=1}. A second result is the following:

\mainB*

They are proved using a  Galoisian result as a black box (see Proposition~\ref{prop:DiffAlgHanoi} below).
They do not require many precautions in their proof because in both cases
$f$ determines the whole space of solutions of the functional equation,
up to an additive or a multiplicative constant.
The rationality of the coefficients involved allows a relatively elementary reasoning on
the residues of the expressions we obtain.
\par
A major ingredient of the proof is the main theorem of \cite{becker_hypertranscendency_1995}
(see Theorem~\ref{thm:Becker-Bergweiler} below),
which classifies
rational $R$ satisfying $(\cR)$ for which there exists a rational $m(t)$ such that
$R(m(t))=m(t^d)$, for some $d\geq 2$.
Those cases are \emph{a priori} eliminated from our consideration, as they are reduced to Mahler equations which have previously been considered.
For all the others, the solutions of the iterative logarithmic equation
$y(R(t))=R'(t) y(t)$ are differentially transcendental.
We call $\F$ the field generated by $\bF$, a non zero solution $\Psi$ of $y(R(t))=R'(t) y(t)$, whose existence is discussed in \S\ref{sec:iterative-log}, and all its derivatives.
Therefore, under the extra assumption:
\begin{quote}\mbox{}
\begin{minipage}{.1\textwidth}$(\cDT)$\end{minipage}
\begin{minipage}{.7\textwidth}
We fix $R\in\C(t)$, with $R(0)=0$, such that no iteration of $R$ is the identity.
We suppose that there exists $\Psi\in\F$ such that
$\phir(\Psi)=R'(t)\Psi$ and that $\Psi$ is differentially transcendental over~$\C(t)$ with respect to $\frac{d}{dt}$.
\end{minipage}
\end{quote}
we reduce the proof to the following statements:

\begin{restatable}{mainthm}{mainD}
\label{thmINTRO:main-additive}
Under the assumptions $(\cDT)$, let $b\in\C(t)$ and let $f\in\C((t))$ be a solution of the equation $\phir(f)=f+b$.
Then, either $f\in\C(t)$ or $f$ is differentially transcendental over $\C(t)$, with respect to $\frac{d}{dt}$.
\end{restatable}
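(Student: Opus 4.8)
The plan is to equip the picture with a derivation that is compatible with the endomorphism $\phir$ and then run a parameterized difference Galois argument whose output we unravel by hand through residues. The central observation is that, although $\frac{d}{dt}$ and $\phir$ do not commute (one checks $\phir\circ\frac{d}{dt}=\frac{1}{R'}\,\frac{d}{dt}\circ\phir$), the twisted derivation $\delta:=\Psi\frac{d}{dt}$ does: using $\phir(\Psi)=R'\Psi$ from $(\cDT)$ we get $\phir(\delta g)=\phir(\Psi)\,\phir(\frac{d}{dt}g)=R'\Psi\cdot\frac{1}{R'}\frac{d}{dt}(\phir g)=\delta(\phir g)$. Since $\F$ is a differential field closed under multiplication by $\Psi$, the operator $\delta$ maps $\F$ to $\F$ and commutes with $\phir$ on $\F$. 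We then assume that $f$ is differentially algebraic over $\C(t)$ and aim to prove $f\in\C(t)$, the opposite horn of the dichotomy being differential transcendence.

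Applying $\delta$ repeatedly to $\phir(f)=f+b$ and using $\delta\phir=\phir\delta$ yields $\phir(\delta^i f)=\delta^i f+\delta^i b$ for every $i\ge 0$. Differential algebraicity of $f$ over $\C(t)$ makes $f$ a $\delta$-algebraic element over $\F$, so the parameterized difference Galois group of the rank one inhomogeneous equation $\phir(y)=y+b$, taken with respect to $\delta$, is a proper $\delta$-subgroup of $\mathbb{G}_a$. This is exactly the regime in which the Galoisian black box, Proposition~\ref{prop:DiffAlgHanoi}, applies: it provides a nonzero operator $L=\sum_{i=0}^{n}c_i\delta^i$ with $c_i\in\C$ together with $h\in\F$ such that $L(b)=\phir(h)-h$. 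Granting this, the commutation of $\delta$ and $\phir$ shows that $F:=L(f)-h$ satisfies $\phir(F)=F$; since the dynamics of $R$ under $(\cR)$ fixes no nonconstant element, $F\in\C$ and $L(f)=h+F\in\F$. If no such $L$ exists, the Galois group is all of $\mathbb{G}_a$, so $f$ is $\delta$-transcendental over $\F$, and the transcendence of $\Psi$ propagates this to differential transcendence of $f$ over $\C(t)$, which is the other conclusion.

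It remains to push the relation $L(b)=\phir(h)-h$ down to rationality of $f$, and this is where the residues enter. Because $\Psi$ is differentially transcendental in the sense of $(\cDT)$, the iterated derivatives $\Psi,\Psi',\Psi'',\dots$ are algebraically independent over $\C(t)$, so $\F=\C(t)(\Psi,\Psi',\dots)$ is graded by total degree in these variables; one checks that $\phir$ preserves this grading while $\delta$ raises it by one, that $\phir(\Psi^{\,n})=(R')^{n}\Psi^{\,n}$, and that the pure monomial $\Psi^{\,n}$ occurs in $L(b)$ with rational coefficient $c_n\,b^{(n)}$. Reading off the homogeneous components of $L(b)=\phir(h)-h$ therefore produces genuine identities over $\C(t)$, the leading one being a rational twisted-coboundary equation $(R')^{n}\phir(h_n)-h_n=c_n b^{(n)}$ modulo contributions of lower-degree monomials, with $h_n\in\C(t)$. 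Analysing these rational identities by taking residues along the $R$-orbits of their poles, and using that $0$ is a fixed point of $R$ with $R'(0)$ a root of unity, one shows that the order $n$ can be lowered step by step until $n=0$; at that stage $f$ is forced to be algebraic over $\C(t)$, and since in characteristic zero $\mathbb{G}_a$ admits no proper nontrivial algebraic subgroup, there is no room for an algebraic irrational solution, whence $f\in\C(t)$.

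I expect the main obstacle to be precisely this residue descent. The delicate point is the behaviour near the fixed point $0$ when $R'(0)$ is a root of unity rather than $0$: there the $R$-orbits accumulate in a parabolic manner, the naive residue sums need not telescope, and one must combine the orbit combinatorics with the constraint that $f$ is an honest element of $\C((t))$ in order to close the argument. A secondary point requiring care is to confirm that passing to the differentially transcendental tower $\F$ introduces no $\phir$-invariants beyond $\C$, so that the reduction $F\in\C$ in the second paragraph is legitimate.
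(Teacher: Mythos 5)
Your first half is sound and matches the paper's setup: the twisted derivation $\partial=\Psi\frac{d}{dt}$ commuting with $\phir$, the reduction to showing that a differentially algebraic $f$ is rational, and the appeal to Proposition~\ref{prop:DiffAlgHanoi}. But two problems then derail the argument. First, a misstatement that matters: the Galoisian black box produces $g\in\K$ (the field $\C(t)(\Psi,\Psi',\dots)$), not merely $g\in\F$; with $h\in\F$ your relation $L(b)=\phir(h)-h$ and the conclusion $L(f)\in\F$ are vacuous, since $L(f)$ trivially lies in $\F$ and one may always take $h=L(f)$. The entire content of the Galois statement is that the coboundary datum lives in the \emph{small} field $\K$, and also note you conflate $\F$ (generated over $\C((t))$) with $\K$ (generated over $\C(t)$) when you set up your grading. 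Second, and decisively: the core of your proof --- the descent lowering $n$ step by step by ``taking residues along the $R$-orbits of the poles'' --- is never carried out, and as you yourself concede, it is unclear it can be: the degree-$n$ component of $\phir(h)-h$ is not the clean twisted coboundary $(R')^{n}\phir(h_n)-h_n$ you write, because under $\phir$ the monomials $\Psi_{i_1}\cdots\Psi_{i_n}$ mix (e.g.\ $\phir(\Psi_0^{n-1}\Psi_1)$ contributes a $(R')^{n-2}R''\,\Psi_0^n$ term), nor is it clear that $h$ is even polynomial in the $\Psi_i$'s. So the proposal as it stands has a genuine gap exactly where the work lies.

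For comparison, the paper's proof shows that no induction on $n$ and no orbit dynamics are needed. From assertion 2 of Proposition~\ref{prop:DiffAlgHanoi} one has $\sum_{i=0}^n\lambda_i\partial^i(f)=g\in\K$ directly; letting $L$ be the differentially algebraic extension of $\C(t)$ generated by $f$ and its derivatives, Lemma~\ref{lemma:derivatives} gives
\begin{equation*}
g\in\K\cap\bigl(\lambda_n f'\Psi_0^{n-1}\Psi_{n-1}+L[\Psi_0,\dots,\Psi_{n-2}]\bigr),
\end{equation*}
and since the $\Psi_i$ are algebraically independent over both $\C(t)$ and $L$, applying the formal operator $\frac{1}{(n-1)!}\bigl(\frac{\partial}{\partial\Psi_0}\bigr)^{n-1}\frac{\partial}{\partial\Psi_{n-1}}$ extracts $\lambda_n f'\in\K$ \emph{in one shot}; purity of $\K/\C(t)$ (Lemma~\ref{lemma:DiffAlg}) then yields $f'\in\C(t)$. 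After that, a single elementary residue argument finishes: writing $f'=\sum_i\frac{c_i}{t-a_i}+w'$ and plugging into $\phir(f')R'=f'+b'$, one sees a sum of logarithmic derivatives of rational functions plus a term with a rational primitive (hence zero residues) vanish, so both vanish separately; then $(f'-w')\Psi_0$ is $\phir$-invariant, hence equals a constant $c$, and the differential transcendence of $\Psi_0$ forces $c=0$, so $f'=w'$ and $f\in\C(t)$. In particular the parabolic-orbit difficulty you flag near the fixed point $0$ simply never arises; if you want to salvage your route, the fix is to abandon the degree-by-degree descent and use the algebraic independence of the $\Psi_i$'s to collapse the identity at top degree directly.
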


\begin{restatable}{mainthm}{mainE}
\label{thmINTRO:main-multiplicative}
Under the assumptions $(\cDT)$, let $a\in\C(t)$ and let $z\in\C((t))$
be such that $\phir(z)=az$.
\change{Then either $z$ is algebraic over $\C(t)$ and
there exists a positive integer $N$ such that $z^N\in\C(t)$ or is differentially transcendental over $\C(t)$.}
\end{restatable}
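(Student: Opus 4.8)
The plan is to convert the single first-order functional equation $\phir(z)=az$ into a statement about a \emph{differential} Galois group by equipping $\F$ with a derivation that commutes with $\phir$, and then to translate the Galoisian output into an elementary residue computation over $\C(t)$. The backbone is the derivation $\partial:=\Psi\frac{d}{dt}$ on $\F$: using the twisted commutation $\frac{d}{dt}\circ\phir=R'\cdot\phir\circ\frac{d}{dt}$ together with $\phir(\Psi)=R'\Psi$ supplied by $(\cDT)$, one checks that $\partial\circ\phir=\phir\circ\partial$, so $(\F,\phir,\partial)$ is a $\phir$-$\partial$-field with commuting operators. Since $\partial^k z$ and $z,z',\dots,z^{(k)}$ are related by an $\F$-linear triangular change of generators whose diagonal entries are the powers $\Psi^{k}\neq 0$, differential algebraicity of $z$ over $\C(t)$ for $\frac{d}{dt}$ forces differential algebraicity of $z$ over $\F$ for $\partial$. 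Assuming then that $z$ is \emph{not} differentially transcendental, Proposition~\ref{prop:DiffAlgHanoi} applies: as $\phir(z)=az$ has rank one (its solution space is one-dimensional over the constants), either $z$ is algebraic over $\F$ and some power of $z$ lies in $\F$, or there is a nonzero linear differential operator $\sum_{i=0}^{n}c_i\partial^i$ with constant coefficients and an element $g\in\F$ such that $\sum_{i=0}^{n}c_i\,\partial^i\!\left(\frac{\partial a}{a}\right)=\phir(g)-g$.

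The second step is to strip off $\Psi$. Write $\rho:=a'/a\in\C(t)$ and $\eta:=\partial a/a=\Psi\rho$. A direct computation shows that each $\partial^i\eta$ is homogeneous of degree $i+1$ in the family $\Psi,\Psi',\Psi'',\dots$, which is algebraically independent over $\C(t)$ precisely because $\Psi$ is differentially transcendental under $(\cDT)$, while $\phir$ preserves this grading (it sends each $\Psi^{(j)}$ to a $\C(t)$-linear form in $\Psi,\dots,\Psi^{(j)}$). After arguing that $g$ may be taken polynomial in the $\Psi^{(j)}$ — the delicate point, since a genuine denominator would create poles incompatible with the dynamics of $R$ — matching the coefficient of $\Psi$ in the lowest homogeneous component extracts a relation $c\,\rho=R'\phir(\sigma)-\sigma$ with $\sigma\in\C(t)$ and a nonzero constant $c$. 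Equivalently, setting $\ell:=z'/z\in\C((t))$, the logarithmic derivative of $\phir(z)=az$ yields the equation over $\C(t)$
\[R'(t)\,\ell(R(t))-\ell(t)=\rho(t),\]
whose homogeneous solutions are exactly the $\C$-multiples of $\Psi^{-1}$ (since the $\phir$-constants reduce to $\C$). The relation just obtained says that this equation admits a solution in $\C(t)$ and that the $\Psi^{-1}$-component of $\ell$ vanishes, so that $\ell\in\C(t)$; were that component nonzero, $\ell$, and hence $z$, would be differentially transcendental because $\Psi^{-1}\notin\C(t)$.

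The last step, which I expect to be the main obstacle, is the residue analysis feeding off $\ell\in\C(t)$. Starting from $\ell=z'/z\in\C(t)$ satisfying the displayed equation with $\rho=a'/a$, I would show that $\ell$ has only simple poles and that all its residues are rational. The input is that $\rho$ has only simple poles, all with \emph{integer} residues, and the mechanism is that the equation transports the poles and residues of $\ell$ along the orbits of $R$, each residue being multiplied by the local degree of $R$ at the corresponding point; the hypotheses $(\cR)$, namely $R(0)=0$, $R'(0)$ zero or a root of unity, and no iterate of $R$ equal to the identity, are exactly what make this bookkeeping terminate and force both pole-simplicity and rationality of the residues. Once $\ell$ has only simple poles with rational residues, clearing denominators produces an integer $N\geq 1$ and $h\in\C(t)$ with $N\ell=h'/h$, whence $(z^N/h)'/(z^N/h)=0$ and $z^N\in\C(t)$. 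This delivers the stated dichotomy: either $z^N\in\C(t)$ for some positive integer $N$, so that $z$ is algebraic over $\C(t)$, or $z$ is differentially transcendental over $\C(t)$.
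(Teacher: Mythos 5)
Your overall skeleton matches the paper's (the commuting derivation $\partial=\Psi\frac{d}{dt}$, Proposition~\ref{prop:DiffAlgHanoi} as the Galoisian black box, reduction to $z'/z\in\C(t)$, then a residue analysis producing $z^N\in\C(t)$), but at both decisive steps you replace the proof by a sketch, and both sketches have genuine gaps. First, the ``stripping off $\Psi$'' step. You use the telescoper form of the Galois output, $\sum_i c_i\partial^i\!\left(\frac{\partial a}{a}\right)=\phir(g)-g$, and then need $g$ to be \emph{polynomial} in $\Psi,\Psi',\dots$ before comparing homogeneous components; you flag this as delicate and offer only the hope that denominators would be ``incompatible with the dynamics of $R$''. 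That is precisely the missing idea: $g$ is a priori only a rational expression in the $\Psi^{(j)}$ (note also that the proposition gives $g\in\K$, not $g\in\F$ as you wrote — with $g$ merely in $\F$ nothing descends to $\C(t)$); since $\phir$ is not surjective on $\C(t)$ there is no easy control of denominators; and even for polynomial $g$ the cross terms in $\phir(\Psi_j)=(R')^{1-j}\Psi_j+\sum_{k<j}A_{j,k}\Psi_k$ pollute the coefficient of $\Psi_0$ (they contribute terms like $\Psi'/\Psi$, not rational ones), while the case $c_0=0$ pushes you into higher components. The paper sidesteps all of this by using the other equivalent assertion of Proposition~\ref{prop:DiffAlgHanoi}: $\sum_i\lambda_i\partial^i\!\left(\frac{\partial z}{z}\right)=g\in\K$. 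With $L$ the differentially algebraic extension of $\C(t)$ generated by $z$ and its derivatives, the left side lies in $\lambda_n\frac{z'}{z}\Psi_0^n\Psi_n+L[\Psi_0,\dots,\Psi_{n-1}]$, and since the $\Psi_i$ are algebraically independent over $L$, applying $\frac{1}{n!}\left(\frac{\partial}{\partial\Psi_0}\right)^{n}\frac{\partial}{\partial\Psi_n}$ --- an operator that maps $\K$ into $\K$ whether or not $g$ has denominators --- extracts $\lambda_n\frac{z'}{z}\in\K$, hence $\frac{z'}{z}\in\C(t)$ by Lemma~\ref{lemma:DiffAlg}. If you keep your route you must actually prove the polynomiality claim; if you switch to this form of the proposition, the difficulty evaporates.

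Second, the residue step. Knowing that $\ell=z'/z\in\C(t)$ has only simple poles with rational residues is \emph{not} enough to get $N\ell=h'/h$: you also need $\ell$ to have no regular part (e.g.\ $\ell=1+1/t$ has simple poles and integer residues, yet no multiple of it is a logarithmic derivative), and pole-simplicity itself is not automatic, since $z\in\C((t))$ controls $\ell$ only at $t=0$ (the power series $z=\exp\!\left(-\frac{1}{t-1}\right)\in\C[[t]]$ has $z'/z=\frac{1}{(t-1)^2}$). Your proposed mechanism --- transport of residues along orbits of $R$, terminating thanks to $(\cR)$ --- is asserted, not proved, and it is not how either fact is established. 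The paper writes $\ell=\sum_i c_i/(t-a_i)+w'$ and splits the functional equation into $\sum_i c_i\frac{s_i'}{s_i}-\frac{a'}{a}$ (a combination of logarithmic derivatives, where $s_i=\frac{R(t)-a_i}{t-a_i}$) plus $\phir(w')R'-w'$ (a derivative, hence residue-free), so both vanish; then $\phir(w'\Psi_0)=w'\Psi_0$ forces $w'\in\C(t)\cap\C\Psi_0^{-1}=\{0\}$ --- an algebraic use of the differential transcendence of $\Psi$, not of the dynamics of $R$. Rationality of the residues is then obtained by a $\Q$-linear-independence argument (take a $\Q$-basis of the span of the $c_i$ and compare with the integer residues of $a'/a$ in $\sum_i c_i\frac{s_i'}{s_i}=\frac{a'}{a}$), and the final identification $z=\tilde z\prod_i(t-a_i)^{\lambda_{i,1}}$ with $\tilde z$ constant needs one more application of Lemma~\ref{lemma:mainthm-multiplicative}. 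As written, your last paragraph assumes exactly the conclusions that constitute the content of this step.
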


We prove
Theorems~\ref{thmINTRO:main-additive} and \ref{thmINTRO:main-multiplicative},
in Section \S\ref{sec:main-additive} and \S\ref{sec:main-homogenous}, respectively.
As far as assumptions $(\cDT)$ are concerned, the assumption on $R'(0)$ has disappeared and has been
replace by an assumption on $\Psi$. The point is that when $R'(0)\neq 0$ is not a root of unity we
have more than two cases: either the functional equation reduces to a $q$-difference equation, which is already covered by the literature,
or $\Psi$ is differentially transcendental, or $\Psi$ is differentially algebraic of certain types classified by Ritt in \cite{ritt_transcendental_1926}.
The latter situation is not treated in this paper.
See Remark~\ref{rmk:ritt-missed-case}.
\par
We show in Remark~\ref{rmk:Beker-Bergweiler} that the polynomial
$t^2+t^3$ associated with the complete $\{2,3\}$-trees (see Equation~\eqref{eq:23treeeqn})
is an example for which $\Psi$
is differentially transcendental and cannot be reduced to a Mahler equation,
hence our results are really needed in the applications.
See Proposition~\ref{prop:novelty}.

\paragraph{Proof of Theorem~\ref{thmINTRO:main-0}.}
Theorems~\ref{thmINTRO:b=0} and \ref{thmINTRO:a=1} are not enough to prove Theorem~\ref{thmINTRO:main-0}.
Now, if we can find an algebraic solution $z$ of $y(R(t))=a(t)y(t)$,
we can, by dividing the unknown function by $z$, transform the equation $y(R(t))=a(t)y(t)+b(t)$ into an equation that resembles that of Theorem~\ref{thmINTRO:a=1} except now with \emph{algebraic} coefficients:
$y(R(t))=y(t)+\frac{b(t)}{z(R(t))}$.
We  are no longer in the situation of Theorem~\ref{thmINTRO:a=1}.
Furthermore, Theorem~\ref{thmINTRO:main-0} provides one
formal power series solution of $y(R(t))=a(t)y(t)+b(t)$, but no solution of the associated homogeneous functional equation $y(R(t))=a(t)y(t)$, that we need to construct abstractly, to obtain a full set of solutions. 
To fill this gap,  we consider a more abstract setting and
prove statements that can be considered as particular cases of Theorem~\ref{thmINTRO:main-0}:
In other words, we cannot prove Theorem~\ref{thmINTRO:main-0} directly, but we need to make a Galoisian detour.
\par
Let $\bF$ be the field of Puiseux  series
with complex coefficients, i.e., the algebraic closure of $\C((t))$, and let
$\bK$ be the algebraic closure of $\C(t)$ in $\bF$.
To extend the endomorphism of composition with $R(t)$ to $\bF$ one has to
choose a compatible system of roots of $R$, and hence implicitly allow $R$ to be an element of $\bK$.
We call $\Falg$ the field obtained adjoining a convenient
solution $\Psi$ of $y(R(t))=R'(t)y(t)$ and all its derivatives to $\bF$.
Under the following assumption:
\begin{quote}\mbox{}
    \begin{minipage}{.1\textwidth}$(\cDT_{alg})$\end{minipage}
    \begin{minipage}{.7\textwidth}
        We fix a nonzero $R\in\bK\cap t\C[[t]]$, such that no iteration of $R$ is equal to the identity.
        We suppose that $\Psi\in \Falg$ is differentially transcendental over $\bK$
        with respect to $\frac{d}{dt}$.
    \end{minipage}
\end{quote}
we prove two statements analogous to Theorem~\ref{thmINTRO:a=1} and Theorem~\ref{thmINTRO:b=0} (see Proposition~\ref{prop:main-additive-ALG}
and Proposition~\ref{prop:maint-multiplicative-ALG}, below).
Finally we put all the pieces together and we  get to the only
point where we actively need some Galois theory.
Using the abstract solutions of $y(R)=ay$ and $f$, we finally have a
$\C$-algebra where $f(R)=af+b$ has a full set of solutions.
We need to find a nontrivial automorphism of it, commuting with the derivation and the
composition with $R$: $\C$ is a too small field to guarantee such a property and
in \S\ref{subsec:GaloisTheory} we have to consider a even more general situation
by performing an extension of constants to a differentially closed field.
This leads to the proof of Theorem~\ref{thmINTRO:main-R-algebraic}.
Ultimately, this more general setting allows us to prove Theorem~\ref{thmINTRO:main-0},
in its very elementary formulations, that, as we hopefully have convinced the reader,
apply neatly to different combinatorial situations.

\paragraph{Acknowledgments}
We are grateful to the Fondation Mathématique Jacques Hadamard that not only has sponsored
the post-doctoral position of the second author at the Laboratoire de Mathématiques de Versailles,
but was also ready to support Marni Mishna's visit to Versailles in the spring 2020.
The pandemic and our personal lives have delayed the project, until thanks to the joint CNRS-PIMS International Research Laboratory supported a visit of Lucia Di Vizio
at Simon Fraser University in Burnaby, Canada.
\par
We also thank Jason Bell, Xavier Buff, Vincent Guedj, 
Sarah Selkirk and Michael Singer for useful discussions and comments.

\section{Technical background}
\label{sec:technical}

We first briefly recall the few properties of differential fields that we need in this paper.
For the readers interested in knowing more, one of the classical references on the subject is \cite{ritt_differential_1966}. Then we examine the possible solutions to the functional equation $y(R(t))=R'(t)y(t)$ in order to precisely define the differential algebra that we use in the proof.
\subsection{A brief digression on differential algebra}
\label{subsec:DiffAlgebra}
Let $(K,\partial)$ be a \emph{differential field} of characteristic zero, that is a field $K$ of characteristic zero equipped with a linear map $\partial$ such that
$\partial(ab)=\partial(a)\,b+a\,\partial(b)$, for any $a,b\in K$. We call $\partial$ the derivative of $K$.
Most of the time we will write only $K$ for $(K,\partial)$.
A \emph{differential field extension} $F/K$ is a field extension such that both $K$ and $F$ are differential fields and the derivative of $F$ extends the derivative of $K$. We denote by $\partial$ both the derivative of $F$, and the one of $K$.

\begin{exa}
For instance, we can take $K=\C(t)$, $F=\C((t))$ and $\partial=\frac{d}{dt}$.
\end{exa}

An element $f\in F$ is said \emph{differentially algebraic over $K$}
if there exists an integer $n\geq 0$ and a polynomial $P$ in $n+1$ variables and with coefficients in $K$
such that $P(f,\partial(f),\dots,\partial^n(f))=0$.
We say that $f$ is \emph{differentially transcendental over $K$} if it is not differentially algebraic over $K$.
Sometimes, to put the accent on the choice of the derivative,
we will say that $f$ is differentially algebraic (resp. transcendental) over $K$ with respect to $\partial$.
We call $P(y,\partial(y),\dots,\partial^n(y))=0$ an algebraic differential equation (in the unknown function $y$, with coefficients in $K$, of order $n$ with respect to the derivative $\partial$, satisfied by $f$).

The field $F$ is said to be an \emph{algebraic differential extension of $K$} if any element of $F$ is differentially algebraic over $K$ and it is said to be a differentially transcendental extension of $K$ otherwise. A \emph{purely differentially transcendental  extension $F/K$} is a differentially transcendental extension
such that no element of $F\setminus K$ is differentially algebraic over $K$.
Notice that if $f\in F$ is differentially transcendental, then for any $n\geq 0$ the algebra $K[f,\partial(f),\dots,\partial^n(f)]$ is a purely transcendental algebra over $K$
and can naturally be identified with a ring of polynomials in $n+1$ variables.
\par
The following elementary lemma is the only property of differential extensions that we will need in what follows:
\begin{lemma}\label{lemma:DiffAlg}
Let $L$ be an intermediate differential field of the differential extension $F/K$,
i.e. an intermediate field stable by $\partial$. We suppose that $L/K$ is a differentially algebraic extension.
Then:
\begin{enumerate}
    \item If $\widetilde{K}$ is an intermediate differential field of $F/K$ that is a purely differentially transcendental extension of $K$, then $L\cap\widetilde K=K$.
    \item If $f\in F$ is differentially transcendental over $K$, then $f$ is also differentially transcendental over $L$.
\end{enumerate}
\end{lemma}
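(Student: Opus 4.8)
The first statement is essentially a diagram chase from the definitions. To prove $L\cap\widetilde K\subseteq K$, pick any $x\in L\cap\widetilde K$. On one hand $x\in L$, and since $L/K$ is differentially algebraic by hypothesis, $x$ is differentially algebraic over $K$. On the other hand $x\in\widetilde K$, and $\widetilde K/K$ is \emph{purely} differentially transcendental, so no element of $\widetilde K\setminus K$ is differentially algebraic over $K$. The only way to reconcile these facts is $x\in K$, which gives the claim; the opposite inclusion $K\subseteq L\cap\widetilde K$ is immediate since $K$ sits inside both $L$ and $\widetilde K$.

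For the second statement I would proceed by contraposition, showing that if $f\in F$ is differentially algebraic over $L$, then it is already differentially algebraic over $K$. Suppose $P(f,\partial f,\dots,\partial^n f)=0$ for some nonzero polynomial $P$ with coefficients in $L$, and let $c_1,\dots,c_m\in L$ be the finitely many coefficients of $P$. Consider the intermediate differential field $M=K\langle c_1,\dots,c_m\rangle$ generated over $K$ by the $c_i$ together with all their derivatives; it is contained in $L$ since $L$ is stable under $\partial$. Then $f$ is differentially algebraic over $M$, and it suffices to prove a transitivity statement: each $c_i\in L$ is differentially algebraic over $K$, so $M$ is a differentially algebraic extension of $K$, and differential algebraicity is transitive, whence $f$ is differentially algebraic over $K$.

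The engine behind the transitivity is a count of \emph{ordinary} transcendence degrees. The key elementary fact is that if $c$ is differentially algebraic over a differential field $k$, satisfying an algebraic differential equation of minimal order $d$, then $\partial^d c$ is algebraic over $k(c,\partial c,\dots,\partial^{d-1}c)$, and by repeatedly differentiating that equation every higher derivative $\partial^{j}c$ with $j\ge d$ is algebraic over the same field; hence $k\langle c\rangle$ has transcendence degree at most $d$ over $k$. Applying this successively along $K\subseteq K\langle c_1\rangle\subseteq\cdots\subseteq M$ shows $\operatorname{trdeg}(M/K)<\infty$, and applying it once more to $f$ over $M$ shows $\operatorname{trdeg}(M\langle f\rangle/M)<\infty$. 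By additivity of transcendence degree in the tower $K\subseteq M\subseteq M\langle f\rangle$ we obtain $\operatorname{trdeg}(M\langle f\rangle/K)<\infty$, and therefore $\operatorname{trdeg}(K\langle f\rangle/K)<\infty$ as well, since $K\langle f\rangle\subseteq M\langle f\rangle$. But by the very definition of differential transcendence recalled above, $f$ being differentially transcendental over $K$ is equivalent to the family $f,\partial f,\partial^2 f,\dots$ being algebraically independent over $K$, which would force $\operatorname{trdeg}(K\langle f\rangle/K)=\infty$. This contradiction shows $f$ must be differentially algebraic over $K$, completing the contrapositive.

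The only delicate point, and the step I would write out with the most care, is the finiteness $\operatorname{trdeg}(M/K)<\infty$: one must check that the minimal-order differential equation satisfied by each $c_i$ genuinely involves its top derivative (so that $\partial^{d}c_i$ is algebraic over the lower ones) and that differentiating it does not make the relevant leading coefficient vanish identically, both of which are guaranteed in characteristic zero. Everything else reduces to the standard additivity of ordinary transcendence degree in towers and to the translation between ``differentially transcendental over $K$'' and ``the successive derivatives are algebraically independent over $K$,'' which is immediate from the definition.
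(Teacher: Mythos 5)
Your proof is correct and takes essentially the same route as the paper's: both arguments reduce to the finitely generated differentially algebraic subextension of $L$ generated by the coefficients of the equation satisfied by $f$, and then conclude by additivity of (ordinary) transcendence degree in the tower, using that differential transcendence of $f$ over $K$ is equivalent to $\operatorname{tr.deg}_K K(\partial^i f,\ i\geq 0)=\infty$. The only difference is one of detail: you spell out why the finitely generated differentially algebraic extension has finite transcendence degree (the minimal-order argument), a point the paper asserts without proof.
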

\begin{proof}
The first assertion follows from the definitions.
We prove now the second statement, under the assumption that $L$ is generated over $K$ by a finite number of
differentially algebraic elements and their derivatives, which implies that
$\hbox{tr.deg}_K L<\infty$.
We suppose by contradiction that $f$ is differentially algebraic over $L$,
i.e., that there exist an integer $n\geq 0$  and a polynomial $P$ with coefficients in $L$ and in $n+1$ variables,
such that $P(f,\partial(f),\dots,\partial^n(f))=0$.
Therefore:
    \[
    \hbox{tr.deg}_K K(\partial^i(f), i\geq 0)\leq
    \hbox{tr.deg}_K L(\partial^i(f), i\geq 0)=
    \hbox{tr.deg}_K L+\hbox{tr.deg}_L L(\partial^i(f), i\geq 0)<\infty.
    \]
We conclude that $f$ is differentially algebraic over $K$, against the assumptions.
\par
If $f$ is differentially algebraic over a general differentially algebraic extension $L$ of $K$,
then $f$ is differentially algebraic over the extension of $K$ generated by the coefficients of its differential equation
$P(f,\partial(f),\dots,\partial^n(f))=0$ and their derivatives.
We conclude from the previous discussion that $f$ is differentially algebraic over $K$, which completes the proof.
\end{proof}

\subsection{The iterative logarithm of \texorpdfstring{$R$}{R}}
\label{sec:iterative-log}

We recall the notation.
Let $\C((t))$ be the field of (formal) Laurent series with coefficients in the field $\C$ of complex numbers
and let  $R(t)\in t\C[[t]]$
be \emph{nonzero} power series, without constant coefficient.
We consider the field endomorphism $\phir:\C((t))\to\C((t))$ defined by
    \[
    f(t):=\sum_nf_nt^n\mapsto\phir(f(t)):=f(R(t)):=\sum_nf_nR(t)^n\,,
    \]
which is well defined since $R$ has no constant term.
We identify $\C(t)$ with a sub-field of $\C((t))$, by identifying rational functions with their Taylor expansion.
\par
For any $f\in \C((t))$, we will denote by $\frac{df}{dt}$ or simply by $f'$
 the usual derivative of $f$ with respect to~$t$. We will also write
 $f^{(n)}$ for $\frac{d^nf}{dt^n}$.
By chain rule,  we have $\phir(f)'=\phir(f')R'$.
We are interested in the (differential properties) of solutions of the functional equation
    \begin{equation}
    \label{eq:Julia}
    y(R(t))=R'(t)y(t),
    \end{equation}
usually named after Julia or Jabotinski in the literature,
which will play an auxiliary, yet crucial, role in the proofs below.
Its solutions are sometimes called iterative logarithms. See \cite[Page 31, Def. 4(b)]{ecalle_theorie_1974} or \cite{ecalle_theorie_1975}.
We recall assumption $(\cR)$ on $R$:
    \[
        \begin{array}{l}
            R(t)\in\C(t),~R(0)=0,~R'(0)\in\{0,1,\hbox{roots of unity}\},\\
            \hbox{but no iteration of $R(t)$ is equal to the identity.}
        \end{array}  
        \leqno{(\cR)}
    \]
First of all, we recall the main result of \cite{becker_hypertranscendency_1995}, which extends
Ritt's theorem from~\cite{ritt_transcendental_1926}
(see also \cite{fernandes_survey_2021}).
It mentions the $d$-th Chebyshev polynomial $T_d(t)$, for $d\geq 1$, defined by the
identity $T_d(\cos\theta)=\cos(d\theta)$, for any real value of $\theta$,
and homography, which in this context is essentially a fractional linear transformation.

\begin{thm}[{\cite{becker_hypertranscendency_1995}}]\label{thm:Becker-Bergweiler}
Under the assumption $(\cR)$ we have:
\begin{enumerate}
\item
If $R(t)$ has a zero of order $d\geq 2$ at $0$, there exists a solution $\tau\in t+t^2\C[[t]]$ of
the functional equation $\tau(R(t))=\tau(t)^d$ and
one of the following possibilities occurs:
\begin{enumerate}
    \item There exists a homography $m$ such that
    $R(m(t))=m(t^d)$ and $\tau$ is differentially algebraic over $\C(t)$;
    \item There exists a homography $m$ such that
    $R(m(t))=m(\pm T_d(t))$ and $\tau$ is differentially algebraic over $\C(t)$;
    \item $\tau$ is differentially transcendental over $\C(t)$.
\end{enumerate}
\item
If  $R'(0)=1$, there exists
 $\tau\in\C[[t]]$ satisfying the functional equation $\tau(R(t))=\tau(t)+1$ and $\tau$ is differentially transcendental over $\C(t)$.
\end{enumerate}
\end{thm}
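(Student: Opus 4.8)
The plan is to separate two independent issues: the \emph{existence} of the conjugating series $\tau$, which is classical normal-form theory, and the determination of its \emph{differential nature}, which is the real content. For existence in the case $d\ge 2$, I would first conjugate $R$ by a scaling $t\mapsto\lambda t$ so that its leading coefficient becomes $1$, i.e.\ $R(t)=t^d+O(t^{d+1})$; since scalings are homographies and affect neither differential algebraicity over $\C(t)$ nor the conclusions (a)--(c), this is harmless. Writing $\tau(t)=\sum_{j\ge 0}a_j t^{j+1}$ with $a_0=1$ and matching coefficients in $\tau(R(t))=\tau(t)^d$, one sees that the coefficient of $t^{d+k}$ forces $a_k$ to appear on the right with the nonzero factor $d$, while on the left only the $a_j$ with $j<k$ intervene, because each summand $a_jR^{j+1}$ of $\tau(R)$ starts at order $t^{(j+1)d}$ and $d\ge 2$ produces a strict lag. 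Hence every $a_k$ is uniquely determined and the formal B\"ottcher coordinate $\tau\in t+t^2\C[[t]]$ exists and is unique. For $R'(0)=1$ one writes $R(t)=t+ct^{p+1}+\cdots$ and runs the analogous recursion for the additive (Abel/Fatou) equation $\tau(R(t))=\tau(t)+1$; here the lag of the $d\ge 2$ case is lost, so $\tau$ is built in the appropriate formal field (it naturally carries a pole of order $p$ at the origin).

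The two non-generic alternatives in the first case are then easy to verify directly. If $R\circ m=m\circ p_d$ for a homography $m$, where $p_d(t)=t^d$, then $\tau=m^{-1}$ already solves $\tau(R(t))=\tau(t)^d$ and is itself a homography, hence rational and \emph{a fortiori} differentially algebraic; this is alternative (a). If instead $R\circ m=m\circ(\pm T_d)$, then $\tau$ is the $m^{-1}$-transform of the B\"ottcher coordinate of $\pm T_d$, and the latter is algebraic: the inverse Joukowski substitution $w=\tfrac12(z+z^{-1})$ satisfies $T_d(w)=\tfrac12(z^d+z^{-d})$ and thus linearises $T_d$ to $z\mapsto z^d$, so its B\"ottcher coordinate is an algebraic function of $t$. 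Hence $\tau$ is algebraic over $\C(t)$, again differentially algebraic; this is alternative (b).

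The crux, and the step I expect to be by far the hardest, is the converse: if $\tau$ is differentially algebraic over $\C(t)$ then $R$ must be homographically conjugate to $p_d$ or to $\pm T_d$, leaving (c) as the only remaining possibility. This is Ritt's theorem in the form sharpened by Becker--Bergweiler. The approach I would take is to pass to the functional inverse $\psi=\tau^{-1}$, which semiconjugates $p_d$ to $R$ via $\psi(t^d)=R(\psi(t))$ and analytically continues with (the pullback of) the Julia set of $R$ as its natural boundary. Differential algebraicity of $\tau$, equivalently of $\psi$, is extremely rigid: combining the algebraic differential equation satisfied by $\psi$ with the functional equation and iterating $t\mapsto t^d$ lets one control the growth of $\psi$ (its Nevanlinna characteristic) and the shape of its singular set, and forces the Julia set of $R$ to be a real-analytic arc, i.e.\ a circle or a segment. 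The classification of rational maps possessing a superattracting fixed point together with a Julia set of this special type then leaves only the power maps and the Chebyshev maps up to homography. Converting ``$\tau$ is differentially algebraic'' into this geometric rigidity, and then invoking the classification, is the genuine obstacle and the part that really requires the complex-dynamical and Nevanlinna-theoretic machinery of \cite{becker_hypertranscendency_1995} (extending \cite{ritt_transcendental_1926}).

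For the parabolic case $R'(0)=1$ there are no exceptional alternatives, for a structural reason: local conjugacy preserves the multiplier at the fixed point, so a germ tangent to the identity (multiplier $1$) can never be locally conjugate to a power or Chebyshev map (multiplier $0$ at their superattracting fixed point). Running the same analysis on the Abel coordinate $\tau$ therefore yields differential transcendence outright, the only degenerate escape being when $R$ is itself a homography conjugate to a translation, in which case $\tau$ becomes rational; that situation is peeled off and handled by the classical results recalled in the introduction. Once Theorem~\ref{thm:Becker-Bergweiler} is in hand, it pins down precisely when the conjugating coordinate, and hence the iterative logarithm extracted from it, is differentially transcendental, which is exactly the input needed to verify hypothesis $(\cDT)$ in the sequel.
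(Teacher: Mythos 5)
Your proposal is correct and takes essentially the same route as the paper: the paper likewise obtains the existence of $\tau$ from the formal content of B\"ottcher's theorem, passes to the compositional inverse, and then delegates the entire trichotomy---including the hard converse ``differentially algebraic $\Rightarrow$ conjugate to $t^d$ or $\pm T_d$'' and the outright transcendence in the parabolic case---to Theorems~(ii) and (iii) of \cite{becker_hypertranscendency_1995}, exactly as you do. The only differences are that you write out the elementary steps the paper merely cites (the coefficient recursion for $\tau$, the Joukowski linearization of $\pm T_d$), and that you are in fact more careful than the paper on two boundary points: the Abel coordinate has a pole (and generically a logarithmic term) at $0$, so the statement ``$\tau\in\C[[t]]$'' in part~2 cannot be read literally, and degree-one parabolic maps such as $R(t)=t/(1+t)$---allowed by $(\cR)$ but outside the degree~$\geq 2$ scope of \cite{becker_hypertranscendency_1995}, and for which the Abel function is a homography, hence rational---must indeed be peeled off as you propose.
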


\begin{proof}
\begin{enumerate}
\item  Let $d$ be the order of $R$ at $0$.
It follows from Böttcher's Theorem \cite{boettcher_principal_1904}, \cite[\S II]{ritt_iteration_1920} that
there exists a power series $\tau(t)\in t\C[[t]]$,
such that $\tau(R(t))=\tau(t)^d$.
Hence $\tau$ admits a compositional inverse~$\sigma$ which is also an element of
$t\C[[t]]$ and which is differentially transcendental over $\C(t)$
if and only if $\tau$ is differentially transcendental over $\C(t)$.
Then the result follows from \cite[Theorem~(ii), page 466]{becker_hypertranscendency_1995}.
\item 
This assertion coincides with \cite[Theorem~(iii), page 466]{becker_hypertranscendency_1995}.
\end{enumerate}
\end{proof}

\begin{rmk}\label{rmk:Beker-Bergweiler}
\begin{enumerate}
    \item
If $R$ has a zero of order $d\geq 2$ at $0$, then $\tau$ is called a Böttcher function.
Böttcher's theorem says more about $\tau$ than we have used in the proof above.
It guarantees that $\tau$ is convergent at $0$ and
is defined, in a neighbourhood of the origin, as the uniform limit
of the family $\left[R^{\circ n}(t)\right]^{1/d^n}$, when $n$ goes to infinity.
See \cite[\S II]{ritt_iteration_1920} for the details.
\par
If $R(t)=t^d$, then $\tau=\zeta t$, with $\zeta^{d-1}=1$.
If on the other hand, $R(t)=\pm T_d(t)$,
then $\tau$ is an algebraic function defined by
$\zeta^2\tau(t)^2-2\zeta t\tau(t)+1=0$, with $\zeta^{d-1}=\pm 1$.
In the latter case, the compositional inverse
$\sigma$ of $\tau$ verifies $\sigma(s)=\frac{1}{2}\left(\zeta s+\frac{1}{\zeta s}\right)$,
as one can verify applying the variable change $s=\tau(t)$.\footnote{
Notice that the polynomial $T_d(t)$ has a zero of order $d$ at $\infty$, while we have supposed that we have a fixed point at $0$.
Of course, it suffices to make a change of variable of the form $t\mapsto 1/t$.
\par
One can prove quite easily, by recurrence on $d$ that $T_d(\sigma(s))
=\sigma(s^d)$. Indeed we have $T_2(t)=2t^2-1$, so that for any root of unity $\zeta$
we have $T_2\left(\frac{1}{2}\left(\zeta s+\frac{1}{\zeta s}\right)\right)=\frac{1}{2}\left(\zeta^2 s^2+\frac{1}{\zeta^2 s^2}\right)$.
If $\zeta=1$, we have the conjugation for $d=2$. One completes the proof using the recurrence
$T_{d+1}(t)=2tT_d(t)-T_{d-1}(t)$.}
    \item
If $R'(0)=1$, the series $\tau$ is said to be an \emph{Abel function}.  In this case, $\tau$ is analytic in some ``petals'' centered at $0$, and admits an asymptotic expansion at zero.
\end{enumerate}
\end{rmk}

Now we switch our attention back to the Julia equation~\eqref{eq:Julia}.
Before actually stating the existence of its solutions we need to define a convenient function field where one can find them. In what follows we denote by $\C(\{t\})$ the field of germs of meromorphic functions at $0$.
We are only sketching the proof of the following lemma as it is quite classical:

\begin{lemma}\label{lemma:construction-F}
The tensor product $\C(\{t\})(\log t)\otimes_{\C(\{t\})}\C((t))$
is a domain,
equipped with a canonical action
of $\frac{d}{dt}$ and $\phir$, such that
$\frac{d}{dt}\circ\phir=R'(t)\phir\circ\frac{d}{dt}$.
\par
Its field of fractions, that we will denote by $\C((t))(\log t)$,
is equipped with an action of both $\frac{d}{dt}$ and $\phir$, such that $\frac{d}{dt}\circ\phir=R'(t)\phir\circ\frac{d}{dt}$.
Its sub-field of invariant elements with respect to $\phir$ is $\C$.
\end{lemma}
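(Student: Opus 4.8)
The plan is to prove Lemma~\ref{lemma:construction-F} in two stages: first establish the structural claims about the tensor product (that it is a domain with compatible actions of $\frac{d}{dt}$ and $\phir$), and then pass to the field of fractions and identify the $\phir$-invariants.

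\emph{Setting up the actions on the tensor product.} First I would note that $\C(\{t\})(\log t)$ is a differential field extension of $\C(\{t\})$, where we set $\frac{d}{dt}(\log t)=\frac{1}{t}$, and that $\log t$ is transcendental over $\C(\{t\})$, so this is a rational function field in the single variable $\log t$. Both factors $\C(\{t\})(\log t)$ and $\C((t))$ contain the common subfield $\C(\{t\})$ (germs of meromorphic functions at $0$ embed in formal Laurent series via Taylor expansion). The key structural point is that $\C(\{t\})(\log t)\otimes_{\C(\{t\})}\C((t))$ is a domain: since $\C(\{t\})(\log t)$ is a purely transcendental, hence \emph{separable} (in fact, field) extension of $\C(\{t\})$, tensoring it with any field extension of $\C(\{t\})$ yields a domain. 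Concretely, an element of the tensor product is a polynomial in $\log t$ with coefficients in $\C((t))$, i.e. the tensor product is identified with the polynomial ring $\C((t))[\log t]$, which is manifestly a domain because $\C((t))$ is.

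\emph{Extending $\frac{d}{dt}$ and $\phir$ and checking the commutation relation.} On $\C((t))[\log t]$ I would define $\frac{d}{dt}$ by the Leibniz rule, extending the usual derivative on $\C((t))$ and sending $\log t\mapsto \frac{1}{t}$; this is the unique derivation extending both factors' derivations and is well defined precisely because the tensor product is the free object over $\C(\{t\})$. For $\phir$, I extend the endomorphism of composition with $R$ from $\C((t))$ by declaring $\phir(\log t)=\log R(t)$. Here one must check this makes sense: $\log R(t)=\log(t^{\ord_0 R}\,u(t))=(\ord_0 R)\log t+\log u(t)$ where $u(t)=R(t)/t^{\ord_0 R}$ has nonzero constant term, so $\log u(t)\in\C(\{t\})$ is a genuine germ of a meromorphic function at $0$; thus $\phir(\log t)$ indeed lands in $\C((t))[\log t]$. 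Then the commutation $\frac{d}{dt}\circ\phir=R'(t)\,\phir\circ\frac{d}{dt}$ is verified by the chain rule on generators: on $\C((t))$ it is the identity $\phir(f)'=\phir(f')R'$ recalled in the text, and on $\log t$ one computes $\frac{d}{dt}\phir(\log t)=\frac{d}{dt}\log R(t)=\frac{R'(t)}{R(t)}$, while $R'(t)\,\phir\!\left(\frac{d}{dt}\log t\right)=R'(t)\,\phir\!\left(\frac{1}{t}\right)=\frac{R'(t)}{R(t)}$, so the two sides agree; since both sides are derivation-like and $R$-twisted respectively, agreement on a generating set suffices.

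\emph{Passing to the field of fractions and computing invariants.} The field of fractions of a domain equipped with a derivation and an endomorphism inherits both (derivations and endomorphisms extend uniquely to fraction fields via the quotient rule and multiplicativity), and the commutation relation is preserved since it is an identity of additive operators that holds on the dense subring. This gives the field $\C((t))(\log t)$ with its two operators. The genuinely interesting step, which I expect to be the main obstacle, is showing that the subfield of $\phir$-invariants is exactly $\C$. One inclusion is trivial since $\C$ is fixed by $\phir$. For the converse I would argue as follows: an invariant element $g$ is a ratio $P(\log t)/Q(\log t)$ of polynomials over $\C((t))$, and $\phir(g)=g$ together with $\phir(\log t)=(\ord_0 R)\log t+c$ (with $c=\log u(0)\in\C$ and $\ord_0 R\ge 1$) forces, by comparing how $\phir$ acts degree-by-degree on the $\log t$-expansion, that $g$ must be a scalar in $\C((t))$, after which the condition reduces to $\phir(g)=g$ for $g\in\C((t))$; invariance under composition with $R$ then pins $g$ down to a constant because $R$ is a nonidentity self-map fixing $0$ with no iterate equal to the identity, so the only Laurent series fixed by all iterates of $\phir$ are the constants. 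The delicate point is the $\C((t))$-invariance argument when $\ord_0 R=1$ (the tangent-to-identity or root-of-unity case of $(\cR)$), where one cannot simply read off invariance from valuations and must instead use that a nontrivial parabolic or finite-order-linear-part germ whose iterates are never the identity admits no nonconstant formal fixed series; this is where the hypothesis ``no iteration of $R$ is the identity'' from $(\cR)$ is essential.
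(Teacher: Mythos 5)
Your construction of the two actions and the domain property is essentially the paper's own argument: the paper likewise realizes the tensor product concretely, via the injective morphism $\sum_i f_i(\log t)^i\otimes g_i\mapsto\sum_i f_ig_iT^i$ into $\C((t))(T)$ with $T'=\frac{1}{t}$, and verifies the twisted commutation by the chain rule, exactly as you do. One slip, easily repaired: the tensor product is not the polynomial ring $\C((t))[\log t]$ but its localization at the nonzero elements of $\C(\{t\})[\log t]$, because the left factor is a \emph{field} (it contains $(\log t-f)^{-1}$ for $f\in\C(\{t\})$); since a localization of a domain is a domain and the fraction field is unchanged, this costs nothing.

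The genuine gap is in your last step, the determination of the $\phir$-invariants of the fraction field, which is indeed the only deep claim. You base the computation on the formula $\phir(\log t)=(\ord_0R)\log t+c$ with $c=\log u(0)\in\C$; but, as you yourself observed correctly when defining the action, $\phir(\log t)=d\log t+v$ where $v=\log u(t)\in\C(\{t\})$ is a \emph{nonconstant} series unless $R$ is exactly the monomial $r_dt^d$. This is not cosmetic. Take $d=1$ and an invariant polynomial $g=\sum_{i\le n}a_iT^i$, $n\ge1$, $a_n\neq0$: the leading coefficient gives $a_n\in\C$ via Lemma~\ref{lemma:constants-Phi-R}, but the coefficient of $T^{n-1}$ gives $\phir(a_{n-1})-a_{n-1}=-na_nv$, so one must prove that $v$ is \emph{not} in the image of $\phir-\mathrm{id}$ on $\C((t))$ --- if it were, say $(\phir-\mathrm{id})(w)=-v$, then $T+w$ would be a nonconstant invariant and the lemma would be \emph{false}. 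Establishing this nonexistence (for instance, after replacing $R$ by an iterate so that $R=t+\alpha t^{p+1}+\cdots$, by checking that the coefficient of $t^p$ in $(\phir-\mathrm{id})(h)$ vanishes for every $h\in\C[[t]]$, and excluding principal parts by a valuation argument) is the real content of the step, and it is entirely absent from your sketch; it is invisible there precisely because the incorrect constant-$c$ formula trivializes it. For $d\ge2$ there is a parallel issue for \emph{rational} invariants $P/Q$: relative invariants genuinely exist, since $\phir\bigl((\log\tau)^k\bigr)=d^k(\log\tau)^k$ for the Böttcher coordinate $\tau$, with $\log\tau=T+\log(\tau/t)$ monic in $T$, so the coprimality argument reducing $P/Q$ to the polynomial case must actually be carried out, whereas you only assert that the degree-by-degree comparison "forces $g$ to be a scalar". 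In short, you correctly locate one delicate point (that $\C((t))^{\phir}=\C$ needs the no-iterate-is-the-identity hypothesis), but that alone does not suffice. If you prefer not to do this work by hand, the paper's own route is available: once $\C((t))^{\phir}=\C$ is known, the invariance statement for $\C((t))(\log t)$ follows from the general result \cite[Lemma~2.13]{ovchinnikov_galois_2015}, which extends \cite[Lemma~1.8]{van_der_put_galois_1997} to difference equations with respect to an endomorphism.
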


\begin{proof}[Sketch of the proof.]
First of all, notice that the differential equation $y'=\frac{1}{t}$ does not have any solution in the field of Puiseux series with coefficients in $\C$, hence any solution of such differential equation is transcendental over $\C((t))$.
Therefore we consider the transcendental extension $\C((t))(T)$
of $\C((t))$ and set $T'=\frac{1}{t}$. One can verify by hand
that its sub-field of constants is $\C$. This field can already be identified with $\C((t))(\log t)$,
but we need to define an action of $\phir$ on it, with the expected properties. That's the object of the lemma.
\par
We look at $\C(\{t\})$ as to the field of germs of meromorphic function at $0$ over the Riemann surface of the logarithm, so that
it makes sense to consider the field $\C(\{t\})(\log t)$.
Moreover, we identify $\C(\{t\})$ with a sub-field of $\C((t))$,
identifying the element of $\C(\{t\})$ with its Taylor expansion at zero.
Both $\C(\{t\})(\log t)$ and $\C((t))$ can be embedded in $\C((t))(T)$ and we can consider
the tensor product $\C(\{t\})(\log t)\otimes_{\C(\{t\})}\C((t))$ of $\C(\{t\})$-algebras,
equipped with the canonical derivative induced by $\frac{d}{dt}$,
i.e., $\frac{d}{dt}(f\otimes g)=f'\otimes g+f\otimes g'$,
for any $f\otimes g\in \C(\{t\})(\log t)\otimes_{\C(\{t\})}\C((t))$.
Therefore there is a natural morphism from
$\C(\{t\})(\log t)\otimes_{\C(\{t\})}\C((t))$ to
$\C((t))(T)$ defined by
    \[
    \sum_if_i(\log t)^i\otimes g_i
    \mapsto \sum_i f_ig_iT^i,
    \]
which is injective and commutes with the action of $\frac{d}{dt}$.
The endomorphism $\phir$ acts naturally on $\C(\{t\})(\log t)$, since in a small punctured neighborhood of $0$
on the Riemann surface of the logarithm
the function $\log(R(t))$ has an analytic meaning. Therefore we can
consider its canonical action on the tensor product, which is defined by $\phir(f\otimes g)=\phir(f)\otimes\phir(g)$,
for any $f\otimes g\in \C(\{t\})(\log t)\otimes_{\C(\{t\})}\C((t))$.
Finally, the chain rule holds on $\C(\{t\})(\log t)\otimes_{\C(\{t\})}\C((t))$, thanks to the Leibniz rule,
since it holds on $\C(\{t\})(\log t)$ and on $\C((t))$.
\par
We take $\C((t))(\log t)$ to be the field of fractions of $\C(\{t\})(\log t)\otimes_{\C(\{t\})}\C((t))$.
The sub-field of invariant elements of $\C((t))(\log t)$ with respect to
$\phir$ is still $\C$: One can prove it ``by hand'', however it is a known result under quite general assumptions.
See \cite[Lemma~2.13]{ovchinnikov_galois_2015},
which extends \cite[Lemma~1.8]{van_der_put_galois_1997}
to the case of difference equations with respect to an endomorphism.
\end{proof}

\begin{rmk}
\change{Some readers may like a more algebraic approach to the proof of Lemma~\ref{lemma:construction-F}. Indeed, there exists a series $\chi\in\C[[t]]$ such that 
$\tau(t)=t(1+t\chi(t))$, so that one can formally define 
$\log\tau=\log t+\log\left(1+t\chi(t)\right)$, 
using the definition of $\log(t)$ as a formal power series. One can show formally that the power series defining 
$\log(t)$ has the expected properties when applied to 
products, so that if $\phir(\tau)=\tau^d$ then $\phir(\log\tau(t))=d\log(\tau(t))$.}
\end{rmk}

We are ready to describe the solutions of Equation~\eqref{eq:Julia}.

\begin{prop}\label{prop:Psi}
Let $R(t)\in \C(t)$ satisfy $(\cR)$ and let $\Psi$ be a solution of $\phir(y)=R'(t)y$.
We have:
\begin{enumerate}
\item
If $R(t)$ has a zero of order $d\geq 2$ at $0$, then $\Psi\in\C((t))(\log t)$
and one of the following possibilities occurs:
\begin{enumerate}
    \item there exists a homography $m$ such that
    $R(m(t))=m(t^d)$ and $\Psi$ is a homography composed with a monomial times a logarithm.
    \item there exists a homography $m$ such that
    $R(m(t))=m(\pm T_d(t))$, where $T_d(t)$ is a Chebyshev polynomial, and
    $\Psi$ is in $\C(t,\log t)\subset\C((t))(\log t)$.
    \item $\Psi$ is differentially transcendental over $\C(t)$.
\end{enumerate}
\item
If  $R'(0)$ is a root of unity, then $\Psi\in\C((t))$ is
differentially transcendental over $\C(t)$.
\end{enumerate}
\end{prop}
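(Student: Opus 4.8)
The plan is to reduce the whole statement to the classification of the Böttcher/Abel function $\tau$ furnished by Theorem~\ref{thm:Becker-Bergweiler}, by writing a solution $\Psi$ of the Julia equation $\phir(y)=R'(t)y$ as an explicit algebraic expression in $\tau$ and its derivative, and then transporting the three alternatives and the transcendence statements across that expression. Concretely, when $R$ has a zero of order $d\geq 2$ I would start from $\tau\in t+t^2\C[[t]]$ with $\phir(\tau)=\tau^d$ and set $\lambda:=\log\tau$. Using $\tau=t(1+t\chi(t))$ as in the remark after Lemma~\ref{lemma:construction-F}, one sees $\lambda=\log t+\log(1+t\chi(t))\in\C((t))(\log t)$ and $\phir(\lambda)=d\lambda$. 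A one-line computation with the chain rule $\phir(\lambda)'=\phir(\lambda')R'$ then gives $\phir(\lambda')=d\lambda'/R'$, so that
\[
\Psi:=\frac{\lambda}{\lambda'}
\]
satisfies $\phir(\Psi)=R'\Psi$; since $\lambda'=1/t+\cdots\neq 0$ we get $\Psi\in\C((t))(\log t)$, the first assertion of part~1. When $R'(0)=1$, the same computation applied to the Abel function $\alpha$ with $\phir(\alpha)=\alpha+1$ shows that $\Psi:=1/\alpha'$ solves the Julia equation, and here $\alpha'\in\C((t))$, so $\Psi\in\C((t))$.

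Next I would record that the solution is unique up to a multiplicative constant: if $\Psi_1,\Psi_2$ are nonzero solutions of $\phir(y)=R'y$, then $\phir(\Psi_1/\Psi_2)=\Psi_1/\Psi_2$, so $\Psi_1/\Psi_2$ is a $\phir$-invariant and hence lies in $\C$ by Lemma~\ref{lemma:construction-F} (the same conclusion holds inside $\C((t))$). Thus the solution space is a $\C$-line, and it suffices to establish the stated form for the specific $\Psi$ built above. In case (a) of Theorem~\ref{thm:Becker-Bergweiler} the conjugating map is a homography $\tau=m^{-1}$, so $\Psi=\lambda/\lambda'$ is a rational function of $t$ times the logarithm of a homography and is differentially algebraic; in case (b) one substitutes the quadratic algebraic $\tau$ of Remark~\ref{rmk:Beker-Bergweiler} (equivalently works in the coordinate $\sigma(s)=\tfrac12(\zeta s+\tfrac{1}{\zeta s})$) and checks that the algebraic part cancels, leaving $\Psi\in\C(t,\log t)$.

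The crux is case (c), and the analogous transcendence statement in part~2: I must show that $\tau$ (resp.\ $\alpha$) differentially transcendental forces $\Psi$ differentially transcendental. The key is that differential algebraicity over $\C(t)$ is transitive and stable under $\partial$, under taking antiderivatives, and under $\exp$, which is exactly the content exploited in Lemma~\ref{lemma:DiffAlg}. From $\Psi=\lambda/\lambda'$ we read $(\log\lambda)'=\lambda'/\lambda=1/\Psi$, so over the differential field generated by $\Psi$ the element $\log\lambda$ is an antiderivative of $1/\Psi$, hence differentially algebraic; then $\lambda=\exp(\log\lambda)$ and $\tau=\exp(\lambda)$ are differentially algebraic too, so if $\Psi$ were differentially algebraic over $\C(t)$ then $\tau$ would be, contradicting case (c). The identical argument with $\Psi=1/\alpha'$ (an antiderivative of a differentially algebraic element being differentially algebraic) transfers the transcendence of $\alpha$ to $\Psi$ in part~2.

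Finally, for part~2 when $R'(0)=\zeta$ is a primitive $k$-th root of unity, I would apply the $R'(0)=1$ case to $R^{\circ k}$, which still satisfies $(\cR)$ with $(R^{\circ k})'(0)=1$, producing $\Psi=1/\alpha'\in\C((t))$ differentially transcendental and solving the Julia equation of $R^{\circ k}$. Any $R$-solution is automatically an $R^{\circ k}$-solution by iterating the chain rule, so by the uniqueness step the $R^{\circ k}$-solution space is the line $\C\Psi$, on which the operator $y\mapsto\phir(y)/R'$ acts; it therefore multiplies $\Psi$ by a scalar that one pins down by comparing leading terms. Here the standard local-dynamics fact that the parabolic multiplicity of $R^{\circ k}$ is a multiple of $k$ gives $\Psi$ valuation $\equiv 1\pmod k$, whence $\zeta^{\mathrm{val}(\Psi)}=\zeta$ and the scalar is $1$, i.e.\ $\Psi$ solves the Julia equation of $R$ itself. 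I expect the transcendence transfer in case (c) and the leading-term bookkeeping in this last reduction to be the delicate points; the explicit constructions and the computations in cases (a) and (b) are routine once the relation $\Psi=\lambda/\lambda'$ is in hand.
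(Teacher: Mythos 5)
Your proof is correct and takes essentially the same route as the paper: the same explicit solutions $\Psi=\frac{\tau}{\tau'}\log\tau=\lambda/\lambda'$ in the B\"ottcher case and $\Psi=1/\alpha'$ in the Abel case, the same passage to an iterate of $R$ when $R'(0)$ is a root of unity, and the same appeal to Theorem~\ref{thm:Becker-Bergweiler}, with you merely spelling out two steps the paper leaves implicit (uniqueness of the solution up to a multiplicative constant via $\phir$-invariants, and the transfer of differential transcendence from $\tau$ to $\Psi$ by closure under antiderivatives and exponentials). The only divergence is your final paragraph pinning down the scalar via parabolic multiplicity, which is superfluous for the statement as given: once the arbitrary solution $\Psi$ is known to be a constant multiple of the differentially transcendental $R^{\circ k}$-solution $1/\alpha'\in\C((t))$, both conclusions of part 2 already follow.
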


\begin{proof}
If $\tau$ is a Böttcher function then $\frac{\tau'(R(t))}{\tau(R(t))}R'(t)=d\frac{\tau'(t)}{\tau(t)}$.
Therefore we need to find a solution of the functional equation $\phir(y)=dy$.
As we have pointed out in Remark~\ref{rmk:Beker-Bergweiler}, we know that $\tau$ is a convergent series at $0$, with a zero at $0$.
Therefore the logarithmic derivative $\frac{\tau'(t)}{\tau(t)}$
admits as primitive $\log\tau(t)\in\C(\{t\})(\log t)$,
which is analytic on the Riemann surface of the logarithm, in a
punctured neighborhood of $0$. We have:
$\phir(\log\tau(t))=\log\tau(R(t))=\log\tau(t)^d=d\log\tau(t)$.
We conclude that $\Psi=\frac{\tau}{\tau'}\log\tau\in \C((t))(\log t)$ is a solution of $\phir(y)=R'(t)y$.
The result follows from Theorem~\ref{thm:Becker-Bergweiler}.
\par
Let us prove the second statement.
If $R'(0)$ is a nontrivial root of unity, then it is enough to replace $R$ by an iteration of $R$:
For any $n\geq 1$ we set $R^{\circ 1}=R$
and $R^{\circ n}=R\circ R^{\circ(n-1)}$, so that,
using the chain rule, one can prove recursively that a solution of Equation \eqref{eq:Julia}
is also a solution of $y(R^{\circ n}(t))=(R^{\circ n}(t))'y(t)$.
Therefore, if $R'(0)$ is a root of unity, replacing $R$ with a convenient iterate of $R$, we can assume that $R'(0)=1$, and deduce the result from the case $R'(0)=1$.
If $\tau$ is an Abel function, then $\tau(R(t))=\tau(t)+1$ and hence $\phir(\tau')R(t)'=\tau'$.
$\Psi=\frac{1}{\tau'}$ is a formal power series solution of the functional equation $y(R(t))=R'(t)y$.
The result follows from Theorem~\ref{thm:Becker-Bergweiler}.
\end{proof}

\begin{exa}\label{exa:Psi}
The functional equation \eqref{eq:23treeeqn} of complete $\{2,3\}$-trees provides
an example of a polynomial $R(t)$ satisfying $(\cR)$, for which $\Psi$ is differentially transcendental.
So let us prove that $R(t)=t^2+t^3$ is not conjugated
via a homography
to a monomial or a Chebyshev polynomial.
\par
The set of critical points of $R(t)$ is the set of points in which the derivative $R'(t)$ annihilates,
namely, $C:=\{0,-\frac{2}{3}\}$.
The set of critical values of $R$ is the set of values of $R$ at the points of $C$,
i.e. $V:=\{0, \frac{4}{27}\}$. One says
that $R$ is post-critically finite if the set
    \[
    P(R):=\mathop{\cup}_{n\geq 0}R^{\circ n}(V)
    \]
is finite, where we have denoted by $R^{\circ n}$ the composition of $R$ with itself $n$ times.
The property of being post-critically finite is invariant under conjugation with a
homography $m(t)$, since homographies do not have critical points and
$m\circ R^{\circ n}\circ m^{-1}=(m\circ R\circ m^{-1})^{\circ n}$.
It is easy to check that $P(t^d)$ is finite.
For the finiteness of $P(\pm T_d(t))$ we refer to~\cite[Problem 7-c, page 73]{milnor_dynamics_2006}.
\par
Since for any $t_0\in\left(-\frac{1+\sqrt{5}}{2},-\frac{1-\sqrt{5}}{2}\right)$ we have $|t_0+t_0^2|<1$, we conclude that :
    \[
    |t_0^2+t_0^3|<|t_0|.
    \]
The fact that $\frac{4}{27}\in \left(-\frac{1+\sqrt{5}}{2},-\frac{1-\sqrt{5}}{2}\right)$
implies that
the set $P(R)$ is infinite with an accumulation point at $0$.
It follows from Theorem~\ref{thm:Becker-Bergweiler} from \cite{becker_hypertranscendency_1995}, that
in this case $\Psi$ is differentially transcendental.
\end{exa}

\begin{rmk}\label{rmk:ritt-missed-case}
Notice that if $R'(0)$ is not $0$ or a root of unity, one easily shows that $\Psi\in t+t^2C[[t]]$ substituting a formal power series in the functional equation and showing that one can determine its coefficients recursively (see Lemma~\ref{lemma:iterativelogarithm}, below).
In this case, there exists a power series $\tau$ such that $\phir(\tau)=R'(0)\,\tau$ and $\Psi=\frac{\tau}{\tau'}$.
As far as the differential transcendence of $\tau$ is concerned, Ritt in \cite{ritt_transcendental_1926} has classified the cases in which
$\tau$ is differentially transcendental. Differently from the Abel and the Böttcher cases,
$\tau$ can be both transcendental and differentially algebraic over $\C(t)$, which would add an extra step to the proof of our main results (see \S\ref{sec:mainresults-1}).
\end{rmk}

For further reference, we notice
Equation~\eqref{eq:Julia} can be derived to obtain a system of functional equations satisfied by $\Psi$ and its derivatives.
Before describing it, we introduce the notation:

\begin{notation}
For $\Psi$ and $\Psi$ only, we will write $\Psi_n$ for $\Psi^{(n)}$, for any integer $n\geq 1$, and $\Psi_0$ for $\Psi$ itself.
\end{notation}

\begin{lemma}\label{lemma:derivations-psi}
The solution $\Psi$ and its derivatives satisfy the following system of functional equations:
        \[
        \l\{\begin{array}{ll}
             \phir(\Psi_0)=R'\Psi_0,&  \\
             \phir(\Psi_n)=(R')^{1-n}\Psi_n+\sum_{k=0}^{n-1}A_{n,k}\Psi_k,& \forall n\geq 1,
        \end{array}\r.
        \]
where $A_{n,k}$ is a rational expression in $R',\dots,R^{(n-k+1)}$ with coefficients in $\C$.
\end{lemma}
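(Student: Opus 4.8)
The plan is to prove the system by induction on $n$, obtaining the $n$-th equation by differentiating the $(n-1)$-th one with respect to $t$ and combining the chain rule $\phir(f)'=\phir(f')R'$ with the Leibniz rule. The base case $n=0$ is exactly Equation~\eqref{eq:Julia}. For $n=1$, differentiating $\phir(\Psi_0)=R'\Psi_0$ gives on the left $\frac{d}{dt}\phir(\Psi_0)=\phir(\Psi_1)R'$ by the chain rule, and on the right $R''\Psi_0+R'\Psi_1$; dividing by $R'$ (a nonzero element of $\C(t)$, hence invertible in $\C((t))$) yields $\phir(\Psi_1)=\Psi_1+\frac{R''}{R'}\Psi_0$, which is the claimed shape with $A_{1,0}=R''/R'$.

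For the inductive step I would assume $\phir(\Psi_n)=(R')^{1-n}\Psi_n+\sum_{k=0}^{n-1}A_{n,k}\Psi_k$ with each $A_{n,k}$ rational in $R',\dots,R^{(n-k+1)}$, and differentiate both sides. On the left, the chain rule and $\Psi_n'=\Psi_{n+1}$ give $\frac{d}{dt}\phir(\Psi_n)=\phir(\Psi_{n+1})R'$. On the right, the Leibniz rule produces the term $(R')^{1-n}\Psi_{n+1}$ together with terms involving only $\Psi_0,\dots,\Psi_n$, namely $(1-n)(R')^{-n}R''\Psi_n+\sum_{k=0}^{n-1}\bigl(A_{n,k}'\Psi_k+A_{n,k}\Psi_{k+1}\bigr)$. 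Equating the two sides and dividing by $R'$, the coefficient of $\Psi_{n+1}$ becomes $(R')^{-n}=(R')^{1-(n+1)}$, exactly the required leading coefficient, while re-indexing the shifted sum $\sum_k A_{n,k}\Psi_{k+1}$ lets me read off the recurrences $A_{n+1,0}=A_{n,0}'/R'$, then $A_{n+1,j}=(A_{n,j}'+A_{n,j-1})/R'$ for $1\le j\le n-1$, and finally $A_{n+1,n}=\bigl((1-n)(R')^{-n}R''+A_{n,n-1}\bigr)/R'$.

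The one point requiring care — and essentially the only place the induction could fail — is verifying that each $A_{n+1,k}$ is a rational expression in $R',\dots,R^{(n-k+2)}$, i.e. that the differentiation does not raise the order of the $R$-derivatives beyond the bound $R^{((n+1)-k+1)}$ allowed by the statement. This holds because, by the inductive hypothesis, $A_{n,k}$ involves $R$-derivatives only up to $R^{(n-k+1)}$, so $A_{n,k}'$ involves at most $R^{(n-k+2)}$; a short index check on each of the three recurrences then confirms the claimed bound in every case (in particular $A_{n+1,n}$ involves only $R'$ and $R''$, since $A_{n,n-1}$ involves only $R',R''$). As $R'\neq0$ in $\C(t)$, all the divisions are legitimate, and rationality of the coefficients over $\C$ is preserved throughout, which closes the induction. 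I expect the whole argument to be routine, the only genuinely nontrivial bookkeeping being this index bound.
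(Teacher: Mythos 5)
Your proof is correct and follows essentially the same route as the paper's: the paper likewise differentiates $\phir(\Psi_0)=R'\Psi_0$ to get the $n=1$ case and then invokes the chain rule and induction on $n$, leaving the bookkeeping implicit. Your explicit recurrences for the $A_{n+1,k}$ and the check that differentiation raises the order of the $R$-derivatives by exactly one (staying within the bound $R^{(n-k+2)}$) are precisely the details the paper omits, and they are all verified correctly.
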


\begin{proof}
Notice that the chain rule holds in $\C((t))$ as well as in $\C((t))(\log t)$.
Differentiating $\phir(\Psi_0)=R'\Psi_0$ we obtain the functional equation for $\Psi_1$,
namely, $\phir(\Psi_1)=\Psi_1+\frac{R''}{R'}\Psi_0$.
We can apply the chain rule to prove the statement for $\Psi_n$ by induction on $n$.
\end{proof}

\section{Proofs of Theorem~\ref{thmINTRO:a=1}
and Theorem~\ref{thmINTRO:b=0}}
\label{sec:mainresults-1}

The main workhorse results  of this section are
Theorem~\ref{thmINTRO:main-additive} and Theorem~\ref{thmINTRO:main-multiplicative},
proved in \S\ref{sec:main-additive} and \S\ref{sec:main-homogenous}, respectively.
To deduce Theorem~\ref{thmINTRO:a=1}
and Theorem~\ref{thmINTRO:b=0} from the latter, we rely on the main results of \cite{becker_hypertranscendency_1995}.

\subsection{Proof strategy}
\label{sec:proof-strategy-rational}

We recall Theorem~\ref{thmINTRO:a=1}  from the introduction:
\mainA*

We can say something more precise under some additional conditions. For example, in the application to the generating functions of complete trees, we use the following corollary (see Corollary~\ref{corINTRO:trees-chainsaw} in the introduction):

\begin{cor}\label{cor:trees-chainsaw}
Let $R\in t^2\C[t]\setminus\{0\}$ and $b\in t\C[t]$, with $b\neq 0$ and $\deg_tb<\deg_tR$.
If there exists $f\in\C((t))$ such that $\phir(f)=f+b$, then $f$ is differentially transcendental over $\C(t)$.
\end{cor}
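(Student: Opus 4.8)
The plan is to deduce the corollary from Theorem~\ref{thmINTRO:a=1}, whose hypotheses are immediate in this setting, and then to exclude the rational alternative by an elementary analysis of the poles and the degree of $f$. First I would check that $R$ satisfies $(\cR)$: since $R\in t^2\C[t]\setminus\{0\}$ we have $R(0)=0$ and $R'(0)=0$, so $R'(0)\in\{0,1,\hbox{roots of unity}\}$; moreover $R$ is a polynomial of degree $d:=\deg_t R\geq 2$, hence $\deg_t R^{\circ n}=d^n\to\infty$ and no iterate of $R$ can be the identity. Since $b\in t\C[t]\subset\C(t)$, Theorem~\ref{thmINTRO:a=1} applies and yields the dichotomy: either $f\in\C(t)$ or $f$ is differentially transcendental over $\C(t)$. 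It thus suffices to rule out $f\in\C(t)$.

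Assume for contradiction that $f\in\C(t)$. The first step is to show $f$ is regular at $0$. Writing $v$ for the $t$-adic valuation and $\delta:=v(R)\geq 2$, one has $v(\phir(f))=\delta\,v(f)$, since the monomials $f_nR^n$ have pairwise distinct valuations $n\delta$ and so no cancellation occurs. If $v(f)<0$, then $v(b)\geq 1$ forces $v(f+b)=v(f)$, and the equation $\phir(f)=f+b$ gives $\delta\,v(f)=v(f)$, whence $v(f)=0$, a contradiction; thus $f\in\C[[t]]$ and $f$ has no pole at $0$.

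Next I would show $f$ is a polynomial. Let $S\subset\C\setminus\{0\}$ be the finite set of poles of $f$. Comparing poles on both sides of $\phir(f)=f+b$ — the right-hand side having poles exactly at $S$ because $b$ is a polynomial, and the left-hand side exactly at $R^{-1}(S)$ — gives $R^{-1}(S)=S$, so $S$ is completely invariant under $R$. A counting argument then shows $S=\emptyset$: the map $R\colon S\to S$ is a bijection, and every point of $S$ has all of its $d$ preimages (with multiplicity) inside $S$, so $\sum_{\beta\in S}e_\beta=d\,|S|$ with each ramification index $e_\beta\leq d$; hence every $\beta\in S$ is totally ramified, i.e. $R'(t)$ is divisible by $(t-\beta)^{d-1}$. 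As $\deg_t R'=d-1$, at most one such finite $\beta$ can occur, and if $S=\{\alpha\}$ then $R(t)=\alpha+c(t-\alpha)^d$ with $c\neq0$ and $\alpha\neq0$, so $R'(0)=cd(-\alpha)^{d-1}\neq0$, contradicting $R'(0)=0$. Therefore $S=\emptyset$ and $f$ is a polynomial.

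Finally, $f$ cannot be constant (otherwise $b=0$), so $f$ is a polynomial of degree $e\geq 1$, and I would compare degrees in $\phir(f)=f+b$: the left-hand side has degree $ed$, whereas the right-hand side has degree $\max(e,\deg_t b)<ed$, because $e<ed$ (as $d\geq2$) and $\deg_t b<d\leq ed$. This contradiction shows $f\notin\C(t)$, so by the dichotomy $f$ is differentially transcendental. The main obstacle is the exclusion of finite poles: the crux is recognizing that the pole set is completely invariant under $R$ and that, for a polynomial of degree $\geq 2$ with $R'(0)=0$, complete invariance forces the pole set to be empty; the valuation step and the final degree count are routine bookkeeping.
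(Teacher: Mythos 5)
Your proof is correct, but the key step---ruling out a rational solution $f$---goes by a genuinely different route than the paper's. The paper writes $f=\frac{U}{V}$ in lowest terms and argues algebraically: applying $\phir$ preserves coprimality (via a B\'ezout identity), so comparing the two irreducible fractions $\frac{\phir(U)}{\phir(V)}=\frac{U+bV}{V}$ forces $\phir(V)=cV$ for some constant $c$, whence $\deg_tV\cdot\deg_tR=\deg_tV$ and $V$ must be constant since $\deg_tR\geq 2$; this makes $f$ a polynomial in one stroke, with no need to treat the point $0$ separately. You instead prove $f\in\C[t]$ dynamically: after a valuation argument at $0$, you observe that the finite pole set $S$ satisfies $R^{-1}(S)=S$, and the ramification count $\sum_{\beta\in S}e_\beta=d\,|S|$ forces every point of $S$ to be totally ramified, which for a degree-$d$ polynomial ($d\geq2$) leaves at most one candidate point, excluded because $R'(0)=0$ and $0\notin S$. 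This is essentially the classical fact that a polynomial of degree at least $2$ has at most one finite exceptional (completely invariant) point. Your argument is longer and needs the extra valuation step at $0$ (without which the case $R(t)=ct^d$, $S=\{0\}$ could not be dismissed), but it exposes the dynamical mechanism behind the rigidity; the paper's gcd argument is shorter and purely algebraic. Both proofs then finish with the same elementary degree comparison, using $\deg_tb<\deg_tR$ and $\deg_tR\geq 2$, and both correctly invoke Theorem~\ref{thmINTRO:a=1} for the dichotomy (you additionally verify hypothesis $(\cR)$ explicitly, which the paper leaves implicit).
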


\begin{proof}
Let us suppose towards a contradiction that~$f$ is differentially algebraic over $\C(t)$.
Theorem~\ref{thmINTRO:a=1}, plus the fact that $b\neq 0$, implies that $f$ is a nonconstant rational function.
We can write $f=\frac{U}{V}$, for some $U,V\in \C[t]$, such that $\gcd(U,V)=1$. Let us assume that $V$ is not a constant in $\C$.
Then the functional equation $\phir(f)=f+b$ becomes
$\frac{\phir(U)}{\phir(V)}=\frac{U+bV}{V}$, with $\gcd(U+bV,V)=1$.
Since $R$ is a polynomial, both $\phir(U)$ and $\phir(V)$ are polynomials.
Moreover there exist $u,v\in\C[t]$ such that $uU+vV=1$, hence we conclude that
$\phir(u)\phir(U)+\phir(v)\phir(V)=1$, i.e., $\gcd(\phir(U),\phir(V))=1$.
Therefore both
$\frac{\phir(U)}{\phir(V)}$ and $\frac{U+bV}{V}$ are irreducible rational functions in $\C(t)$ and there must exist $c\in \C$ such that $\phir(V)=cV$.
Since $V$ is not a constant polynomial, we obtain the identity $\deg_t V\cdot\deg_t R=\deg_t V$, hence
$\deg_tR=1$, which contradicts the fact that $R\in t^2\C[t]$. We conclude that $V$ is a constant and
$f\in \C[t]$.
\par
We deduce from
$\phir(f)=f+b$ that $\deg_tf\cdot\deg_t R\leq\max(\deg_tf,\deg_tb)\leq \max(\deg_tf,\deg_tR)$ and hence
that either $\deg_tR=1$ or $\deg_tf=1$.
Since $R$ has at least degree $2$, $f$ must have degree $1$ in $t$.
We write $f=\alpha t+\beta$, for some $\alpha,\beta\in\C$. Then
we have
$\alpha R(t)+\beta=\alpha t+\beta +b$ and hence $R(t)=t+\frac{b}{\alpha}$. We conclude that
$\deg_tR\leq\deg_tb$, counter to the assumption.
We have found a contradiction, therefore $f$ is necessarily differentially transcendental over $\C(t)$.
\end{proof}

Finally we recall the statement of Theorem~\ref{thmINTRO:b=0}:
\mainB*


The next proposition provides insight on the different conclusions of the two statements above,
namely that an equation of the form $\phir(y)=y+b$ cannot have algebraic non-rational solutions.
It could be proved as an immediate consequence of the fundamental theorems of difference Galois theory, but one can also give a relatively short and elementary proof:

\begin{prop}\label{prop:algebraicity}
Let $R\in \C(t)$ be a nonzero rational function having a zero at $0$.
Suppose that there exists $b\in \C(t)$ and~$f\in\C((t))$ such that $\phir(f)=f+b$.
Then either $f\in \C(t)$ or $f$ is transcendental over $\C(t)$.
\end{prop}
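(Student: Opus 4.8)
The plan is to prove the contrapositive: if $f\in\C((t))$ is algebraic over $\C(t)$ and satisfies $\phir(f)=f+b$, then in fact $f\in\C(t)$. So I would suppose $f$ is algebraic of degree $n$ over $\C(t)$ and write its monic minimal polynomial $P(Y)=Y^n+a_{n-1}Y^{n-1}+\cdots+a_0\in\C(t)[Y]$. The first step exploits that $\phir$ is a field endomorphism of $\C((t))$ which maps $\C(t)$ into itself, since $\phir(a)=a(R(t))\in\C(t)$ for $a\in\C(t)$. Applying $\phir$ to the relation $P(f)=0$ yields $P^{\phir}(\phir(f))=0$, where $P^{\phir}(Y):=Y^n+\phir(a_{n-1})Y^{n-1}+\cdots+\phir(a_0)\in\C(t)[Y]$ is obtained by letting $\phir$ act on the coefficients. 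Thus $\phir(f)=f+b$ is a root of the monic degree-$n$ polynomial $P^{\phir}$.

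Next I would observe that, since $b\in\C(t)$, the element $f+b$ has the same degree $n$ over $\C(t)$ as $f$, and its minimal polynomial is exactly $P(Y-b)$: this is monic of degree $n$ and irreducible, because any factorization of it would yield one of $P$. As $P^{\phir}$ is monic of degree $n$ and annihilates $f+b$, the minimal polynomial $P(Y-b)$ divides it and hence equals it:
\[
P^{\phir}(Y)=P(Y-b).
\]
Comparing the coefficients of $Y^{n-1}$ on the two sides gives $\phir(a_{n-1})=a_{n-1}-nb$. Writing $s:=-a_{n-1}\in\C(t)$ for the trace of $f$ (the sum of its conjugates), this reads $\phir(s)=s+nb$.

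I would then combine this with the original equation. Setting $g:=nf-s$, one computes
\[
\phir(g)=n\phir(f)-\phir(s)=n(f+b)-(s+nb)=nf-s=g,
\]
so that $g\in\C((t))$ is invariant under $\phir$. By Lemma~\ref{lemma:construction-F}, the subfield of $\phir$-invariant elements of $\C((t))(\log t)$, and a fortiori of $\C((t))$, is exactly $\C$; therefore $g\in\C$. Consequently $f=(s+g)/n$ lies in $\C(t)$, which is the desired conclusion.

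The hard part will be the identification $P^{\phir}=P(Y-b)$, namely that letting $\phir$ act on the coefficients of the minimal polynomial of $f$ produces precisely the minimal polynomial of the shifted root $f+b$; once this is in hand, everything else is the uniqueness of minimal polynomials and a single coefficient comparison. I also rely on the fact that the $\phir$-invariants of $\C((t))$ reduce to the constants $\C$, which is where the standing hypothesis that no iterate of $R$ equals the identity (built into $(\cR)$ and into the setting of Lemma~\ref{lemma:construction-F}) is genuinely used; for instance, when $R$ has a zero of order $d\ge 2$ at $0$, this follows immediately by comparing orders of vanishing at $0$ on the two sides of $\phir(g)=g$.
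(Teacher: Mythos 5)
Your proof is correct and takes essentially the same route as the paper's: apply $\phir$ to the monic minimal polynomial of $f$, identify the result with the minimal polynomial of the shifted root $f+b$ by uniqueness of monic minimal polynomials, compare coefficients of $Y^{n-1}$ to get $\phir(a_{n-1})=a_{n-1}-nb$, and conclude that $nf+a_{n-1}$ is $\phir$-invariant, hence constant, so $f\in\C(t)$. The only cosmetic difference is that the paper cites Lemma~\ref{lemma:constants-Phi-R} (the invariants of $\C((t))$ under $\phir$ are exactly $\C$, proved under the standing hypothesis that no iterate of $R$ is the identity) rather than deducing this a fortiori from Lemma~\ref{lemma:construction-F}, which is the more natural reference here.
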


The proof is based on the following crucial property of $\phir$, which for further reference,
we state for any $R\in t\C[[t]]$, not necessarily rational:

\begin{lemma}\label{lemma:constants-Phi-R}
Let $R=\sum_{n\geq 1}r_nt^n\in t \C[[t]]$, $R\neq 0$ such that no iteration of $R(t)$ is equal to $t$.
The field $\C((t))^{\phir}$
of invariant elements of $\C((t))$ with respect to $\phir$ coincides with $\C$.
\end{lemma}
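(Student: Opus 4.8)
The plan is to assume, toward a contradiction, that there is a nonconstant $f\in\C((t))$ with $\phir(f)=f$, and to extract a contradiction from a single well-chosen Laurent coefficient. Since $\phir$ fixes $\C$, I may subtract the constant term and work with $g:=f-f_0$, which is nonzero, satisfies $\phir(g)=g$, and has vanishing constant coefficient; in particular its $t$-adic valuation $M:=v(g)$ is nonzero. Writing $d:=v(R)\geq 1$ and $r_1:=R'(0)$, I split the argument according to these two quantities, each time comparing the lowest-order terms of $\phir(g)=\sum_n g_nR(t)^n$ with those of $g$.

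When $d\geq 2$, each summand $g_nR(t)^n$ has valuation $dn$, so there is no cancellation at the bottom and $v(\phir(g))=dM$ with leading coefficient $g_M r_d^M\neq 0$ (where $r_d$ is the leading coefficient of $R$). The equation $\phir(g)=g$ then forces $dM=M$, hence $M=0$, contradicting $M\neq 0$. When $d=1$, each $R(t)^n$ has valuation $n$ and leading coefficient $r_1^n$, so comparing the coefficients of $t^M$ yields $r_1^M g_M=g_M$, that is $r_1^M=1$. If $r_1$ is not a root of unity this is impossible since $M\neq 0$, and we are done.

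It remains to treat $d=1$ with $r_1$ a root of unity. If $r_1$ has order $\ell$, I reduce to the case $r_1=1$: indeed $\phir^{\circ\ell}=\Phi_{R^{\circ\ell}}$, so $g$ is also invariant under $\Phi_{R^{\circ\ell}}$, and since no iterate of $R$ is the identity, no iterate of $R^{\circ\ell}$ is the identity either, so $R^{\circ\ell}$ satisfies the hypotheses of the lemma with $(R^{\circ\ell})'(0)=r_1^\ell=1$. Thus it suffices to prove the statement when $r_1=1$, where $R=t+r_kt^k+\cdots$ with $k\geq 2$ and $r_k\neq 0$, the integer $k$ being finite precisely because $R\neq t$. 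Using $R(t)^n=t^n+nr_k\,t^{n+k-1}+O(t^{n+k})$, I expand $\phir(g)-g=\sum_n g_n\bigl(R(t)^n-t^n\bigr)$: each nonzero summand has valuation exactly $n+k-1$ with coefficient $n g_n r_k$ at that order, and for $n>M$ this order exceeds $M+k-1$. Hence the coefficient of $t^{M+k-1}$ in $\phir(g)-g$ is exactly $M g_M r_k$, which is nonzero because $M\neq 0$, $g_M\neq 0$ and $r_k\neq 0$, contradicting $\phir(g)=g$.

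I expect the case $r_1=1$ to be the main obstacle: there the leading-coefficient comparison is vacuous, and the contradiction must come from a secondary coefficient at order $M+k-1$. This is also the step where the hypothesis that no iterate of $R$ equals $t$ is genuinely used—both to guarantee that $k$ is finite and, through the reduction above, to exclude finite-order maps such as $R(t)=-t$, for which $t^2$ is a nonconstant invariant and the conclusion fails.
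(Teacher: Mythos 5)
Your proof is correct and takes essentially the same route as the paper's: both compare lowest-order coefficients of $\phir(g)=g$ to force $v(R)=1$ and $R'(0)$ a root of unity, replace $R$ by an iterate to reduce to $R'(0)=1$, and then derive the contradiction from the coefficient at order $v(g)+k-1$, which equals $v(g)\,g_{v(g)}r_k\neq 0$ where $r_k$ is the first higher nonzero coefficient of $R$. The only cosmetic differences are that the paper first reduces to $f\in\C[[t]]$ by passing to $1/f$ (instead of allowing negative valuation as you do) and iterates $R$ using the exponent $N=v(f)$ satisfying $r_1^N=1$ rather than the order of the root of unity.
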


\begin{proof}
We suppose by contradiction that there exists
$f=\sum_nf_nx^n\in\C((t))\smallsetminus \C$ such that $\phir(f)=f$.
Since either $f\in \C[[t]]$ or $\frac{1}{f}\in \C[[t]]$, we can suppose
without loss of generality that $f\in \C[[t]]$.
Let $f_0$ be the constant term of $f$. Since $\phir(f-f_0)=f-f_0$,
we can also suppose that $f_0=0$, i.e. $f$ is a nonzero element of $t\C[[t]]$.
Let $f_Nt^N$ and $r_Mt^M$ be the monomials of
lowest degree in $t$
of $f$ and $R$, respectively, so that $f_N,r_M\neq 0$.
We derive from $\phir(f)=f$ that $f_Nr_M^Nt^{NM}=f_Nt^N$.
We conclude that we must have $M=1$, $r_1^N=1$ and $f_N\in\C$.
By replacing $R$ by its $N$-fold iterate, we can suppose that $r_1=1$.
Finally, the development of $\phir(f)=f$ gives the identity:
    \[
    \sum_{n\geq N}\l(\sum_{h=N}^n f_h
    \sum_{m_1+\dots+m_h=n}r_{m_1}\cdots r_{m_h}\r)  t^n
    =\sum_{n\geq N}f_nt^n.
    \]
Identifying the coefficients of $t^N$ on both side of the identity above,
we obtain once more the identity $f_Nr_1^N=f_N$.
Calculating the coefficients of $t^{N+1}$,
we obtain $Nr_1^{N-1}r_2f_{N}+r_1^{N+1}f_{N+1}=f_{N+1}$, hence $Nr_2f_{N}+f_{N+1}=f_{N+1}$ and $r_2=0$.
Let $P$ be the smallest integer~$\geq 2$ such that $r_P\neq 0$.
Identifying the coefficients of $t^{N+P-1}$ in the equation above, we obtain that
$Nr_1^{N-1}r_Pf_N+r_1^{N+P-1}f_{N+P-1}=f_{N+P-1}$. Therefore
$Nr_Pf_N+f_{N+P-1}=f_{N+P-1}$, and hence $f_N=0$ contrary to the assumption.
Therefore the only invariant elements with respect to $\phir$ are the elements of $\C$.
\end{proof}

\begin{proof}[Proof of Proposition~\ref{prop:algebraicity}.]
It is enough to prove that if $f$ is algebraic over $\C(t)$, then it is a rational function.
If $f$ is algebraic over $\C(t)$, there exists a polynomial $P(t,X)\in \C(t)[X]$ of minimal
degree in $X$ such that $P(t,f)=0$. We can suppose that $P$ is monic of degree $N$,
i.e., $P(t,X)=\sum_{i=0}^{N-1}p_i(t)X^i+X^N$, for some integer $N\geq 1$ and
$p_0(t),\dots,p_{N-1}(t)\in \C(t)$.
Applying $\phir$ to $P(t,f)=0$ we find another monic polynomial of degree~$N$ in~$\C(t)[X]$ having $f$ as a root, namely $P(R(t),X+b)=\sum_{i=0}^{N-1}p_i(R(t))(X+b)^i+(X+b)^N$.
Since $P(t,X)$ and $P(R(t),X+b)$ are both monic minimal polynomials of $f$ over $\C(t)$ they must coincide.
In particular they must have the same coefficients of $X^{N-1}$, forcing $p_{N-1}(t)=p_{N-1}(R(t))+Nb$, that is, $\phir(p_{N-1}(t))=p_{N-1}(t)-Nb$.
Now consider the image of $Nf+p_{N-1}(t)$ under $\phir$:
    \[
    \phir(Nf+p_{N-1}(t))=N\phir(f)+\phir(p_{N-1}(t))=Nf+Nb+p_{N-1}(t)-Nb=Nf+p_{N-1}(t).
    \]
Hence $Nf+p_{N-1}(t)$ is invariant with respect to $\phir$ and so Lemma~\ref{lemma:constants-Phi-R} implies that $Nf+p_{N-1}\in \C$. By solving for $f$ we conclude $f\in \C(t)$.
\end{proof}

In general, the same is not true for homogeneous equations. For example,  the irrational, but algebraic, function $f(t) = \frac{1}{\sqrt{1 - 4t^2}}$ satisfies $f\left(\frac{t^2}{1-2t^2}\right)=(1-2t^2)f(t)$.

Next we give some more detail on the strategy of the proof of Theorem~\ref{thmINTRO:a=1} and Theorem~\ref{thmINTRO:b=0}.

\begin{proof}[Idea of the proof of Theorem~\ref{thmINTRO:a=1}]
Let us suppose that $R(t)$ has a zero at $0$ of order $d\geq 2$ and
let~$\tau$ be the associated solution of $\phir(\tau)=\tau^d$ and~$\sigma$ its compositional inverse.
By Theorem~\ref{thm:Becker-Bergweiler} and Remark~\ref{rmk:Beker-Bergweiler},
if $R$ is conjugate via a homography either to~$t^d$ or
to~$\pm T_d(t)$, then $\sigma(t)\in\C(t)$ is obtained composing either $\zeta t$
or $\frac{1}{2}\left(\zeta s+\frac{1}{\zeta s}\right)$ with a homography and hence it is a rational function.
In this case,~$\tau$ is at worse algebraic.
We consider the functional equation $\phir(f)=f+b$ and
we set $w=f\circ\sigma$, $g=b\circ\sigma$ and $s=\tau(t)$. Then $\phir(f)=f+b$ becomes
$f(\sigma(\tau(R(t))))=f(\sigma(\tau(t)))+b(\sigma(\tau(t)))$ and hence
$w(s^d)=w(s)+g(s)$, with $g\in\C(s)$.
Then we know that $w(s)=f(\sigma(s))$ is either a rational function or differentially transcendental over $\C(s)$ (see \cite[page~22]{rande_mahler_1992}).
Since $\tau(t)$ is an algebraic function, $f(t)=w(\tau(t))$  is either an algebraic function of $t$ or differentially transcendental over the field of algebraic functions, and \emph{a fortiori} differentially transcendental over $\C(t)$.
Thanks to Proposition~\ref{prop:algebraicity}, we conclude that $f$ is either rational or differentially transcendental  over $\C(t)$.
\par
It remains to prove the theorem when $\tau$ is differentially transcendental over $\C(t)$,
which furthermore implies that $\Psi$ of the previous section is
differentially transcendental (see Proposition~\ref{prop:Psi}).
This case is handled in Theorem~\ref{thmINTRO:main-additive},  whose proof is in \S\ref{sec:main-additive}.
\end{proof}

\begin{proof}[Idea of the proof of Theorem~\ref{thmINTRO:b=0}]
The same argument as above can be used to deduce Theorem~\ref{thmINTRO:b=0}
from Theorem~\ref{thmINTRO:main-multiplicative} (see \S\ref{sec:main-homogenous} below for the proof).
\change{Let us give some details.
The same change of variable and unknown
function that we have considered in the proof of Theorem~\ref{thmINTRO:a=1} allows to prove that, when $\tau$ is differentially algebraic, $f$ is either an algebraic function or differentially transcendental. To conclude, we need to show that there exists a positive integer $N$ such that $f^N$ is rational. We proceed as in the proof of Proposition~\ref{prop:algebraicity}.
Let us consider the minimal monic polynomial of $f$ over $\C(t)$, so that
$f^N+\sum_{i=0}^{N-1}p_i(t)f^i=0$, with $p_i(t)\in\C(t)$ and $N$ minimal.
Applying $\phir$ and dividing by $a^N$ we obtain:
    \[
    0=f^N+\sum_{i=0}^{N-1}\frac{p_i(t)}{a^{N-i}}f^i.
    \]
The two polynomials must coincide. In particular, we must have $\phir(p_0)=a^Np_0$. Since $f^N$ also satisfies
$\phir(f^N)=a^Nf^N$, we conclude that there must exists a constant $c\in\C$ such that $f^N=cp_0(t)\in\C(t)$, as claimed.}
\end{proof}

In the proofs above,
we reduce to the case of Mahler equation whenever $R'(0)=0$ and $\Psi$ is differentially algebraic over $\C(t)$. In practice, such a condition may be difficult to check, so it is nice not have to. However, one may ask whether
the case of a differentially transcendental $\Psi$ actually occurs,
or if the known theorems on Mahler equations alone would suffice to
conclude differential transcendency in the  applications.
We know from Example~\ref{exa:Psi} that the answer is no:

\begin{prop}\label{prop:novelty}
The functional equation of the complete $(2,3)$-tree, with $R(t)=t^2+t^3$,
cannot be reduced to a Mahler equation via a rational conjugation.
\end{prop}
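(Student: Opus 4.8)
The plan is to translate the phrase ``reducible to a Mahler equation via a rational conjugation'' into the trichotomy of Theorem~\ref{thm:Becker-Bergweiler}. Since $R(t)=t^2+t^3$ has a double zero at the origin, the relevant order is $d=2$, and such a reduction exists precisely when one of the alternatives (a) or (b) of that theorem holds, i.e.\ when there is a homography $m$ with $R(m(t))=m(t^2)$ or with $R(m(t))=m(\pm T_2(t))$. In the first case composing the functional equation with $m$ turns $\phir$ into $s\mapsto s^2$ directly; in the second one composes further with the degree-two map $\sigma(s)=\tfrac12\bigl(\zeta s+\tfrac{1}{\zeta s}\bigr)$, which satisfies $T_d(\sigma(s))=\sigma(s^d)$ by Remark~\ref{rmk:Beker-Bergweiler}, again producing a genuine Mahler equation. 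Thus it suffices to show that $R$ is homographically conjugate neither to a monomial nor to $\pm T_d$, i.e.\ that $R$ falls into alternative (c), where $\Psi$ is differentially transcendental.

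The mechanism I would use to rule out (a) and (b) is post-critical finiteness. Because homographies have no critical points and $m\circ R^{\circ n}\circ m^{-1}=(m\circ R\circ m^{-1})^{\circ n}$, the finiteness of the post-critical set $P(R)=\bigcup_{n\geq 0}R^{\circ n}(V)$ is invariant under homographic conjugation; moreover both $t^d$ and $\pm T_d(t)$ are post-critically finite. Consequently, if $R$ were reducible it would itself be post-critically finite, and the whole proposition collapses to the single assertion that $R(t)=t^2+t^3$ is \emph{not} post-critically finite, which is exactly the content of Example~\ref{exa:Psi}.

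To establish that assertion I would compute the critical set $C=\{t:R'(t)=0\}=\{0,-\tfrac23\}$ and the critical values $V=R(C)=\{0,\tfrac{4}{27}\}$, so that post-critical finiteness reduces to the finiteness of the forward orbit of $\tfrac{4}{27}$. Writing $R(t)=t\,(t+t^2)$, the factor $t+t^2$ lies in $(0,1)$ for every $t$ in $\bigl(0,\tfrac{\sqrt5-1}{2}\bigr)$, so on that interval $0<R(t)<t$. Since $\tfrac{4}{27}$ lies in this interval, an immediate induction shows that the whole orbit of $\tfrac{4}{27}$ remains there and is strictly decreasing; a strictly decreasing sequence of positive reals has pairwise distinct terms, hence the orbit, and therefore $P(R)$, is infinite. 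This contradicts post-critical finiteness and completes the argument.

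The only genuinely delicate point is this last invariance step: one must check that the single contraction $R(\tfrac{4}{27})<\tfrac{4}{27}$ propagates along the entire orbit, so that the sequence is truly infinite rather than eventually trapped at a fixed point. This is settled by the inductive invariance of $\bigl(0,\tfrac{\sqrt5-1}{2}\bigr)$ under $R$ together with the strict inequality $R(t)<t$ there; everything else is the routine computation already recorded in Example~\ref{exa:Psi}.
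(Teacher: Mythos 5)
Your proof is correct and takes essentially the same approach as the paper: both reduce the statement, via Theorem~\ref{thm:Becker-Bergweiler}, to showing that $R(t)=t^2+t^3$ is not homographically conjugate to a monomial or $\pm T_d$, and both rule this out using the invariance of post-critical finiteness under homographic conjugation together with the infinitude of the forward orbit of $\tfrac{4}{27}$. Your only variation is cosmetic: you track the orbit in the invariant interval $\bigl(0,\tfrac{\sqrt{5}-1}{2}\bigr)$ where $0<R(t)<t$, whereas the paper (Example~\ref{exa:Psi}) uses $|R(t_0)|<|t_0|$ on $\bigl(-\tfrac{1+\sqrt{5}}{2},\tfrac{\sqrt{5}-1}{2}\bigr)$; both yield a strictly decreasing, hence infinite, orbit.
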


\begin{proof}
We have shown in Example~\ref{exa:Psi} that $R(t)$ cannot be conjugate to a monomial or
a Chebyshev polynomial, via a homography. Therefore
Theorem~\ref{thm:Becker-Bergweiler} implies that Equation~\ref{eq:23treeeqn} cannot be
reduced to a Mahler equation via a rational variable change.
\end{proof}

\subsection{\texorpdfstring{Iterative difference equations of the form $\phir(y)=y+b$}{y(R)=y+b}. Proof of Theorem~\ref{thmINTRO:main-additive}}
\label{sec:main-additive}

We fix a \emph{nonzero} $R\in\C(t)$, with $R(0)=0$, with no further assumptions on $R'(0)$.
We denote by $\F$ the field generated by $\C((t))$ and $\Psi$ and all its derivatives with respect to $\frac{d}{dt}$.
Until the end of the paper we suppose the following assumptions to be verified:

\begin{quote}\mbox{}
\begin{minipage}{.1\textwidth}$(\cDT)$\end{minipage}
\begin{minipage}{.7\textwidth}
We fix $R\in\C(t)$, with $R(0)=0$, such that no iteration of $R$ is the identity.
We suppose that there exists $\Psi\in\F$ such that
$\phir(\Psi)=R'(t)\Psi$ and that $\Psi$ is differentially transcendental over~$\C(t)$ with respect to $\frac{d}{dt}$.
\end{minipage}
\end{quote}
This means that either $\F=\C((t))$ if $R'(0)\neq 0$ or $\F=\C((t))(\log t)$, if $R'(0)=0$ (see Proposition~\ref{prop:Psi}).
We have seen in \S\ref{sec:iterative-log} that the field $\F$ comes equipped with an extension of the endomorphism~$\phir$ and an extension of the derivative $\frac{d}{dt}$. Moreover, the sub-field of $\F$ of constants is $\C$, which is also the sub-field of $\F$ of invariant elements with respect to $\phir$.
We recall Theorem~\ref{thmINTRO:main-additive}, whose proof is the purpose of this section:

\mainD*


The proof of Theorem~\ref{thmINTRO:main-additive} is based on a result of difference Galois theory,
that we will recall shortly. The following lemma explains the crucial role of $\Psi$ in the proofs below:

\begin{lemma}\label{lemma:CommutationPartialPhi}
The derivation $\partial:=\Psi\frac{d}{dt}$ commutes with $\phir$, namely
$\partial\circ\phir=\phir\circ\partial$.
\end{lemma}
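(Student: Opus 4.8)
The claim is that the derivation $\partial := \Psi \frac{d}{dt}$ commutes with $\phir$. The plan is to verify this directly by computing how both $\partial \circ \phir$ and $\phir \circ \partial$ act on an arbitrary element $f \in \F$, and to show the results agree using the defining functional equation $\phir(\Psi) = R'(t)\Psi$ together with the chain rule.

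First I would recall the two structural facts established earlier in the excerpt. By the chain rule (stated just before Equation~\eqref{eq:Julia}, and extended to $\F$ in Lemma~\ref{lemma:construction-F}), we have the commutation relation
\[
\frac{d}{dt}\circ\phir = R'(t)\,\phir\circ\frac{d}{dt},
\]
meaning that for any $f$, $\frac{d}{dt}\big(\phir(f)\big) = R'(t)\,\phir\!\left(\frac{df}{dt}\right)$. Secondly, $\Psi$ satisfies the Julia equation $\phir(\Psi) = R'(t)\,\Psi$.

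The computation then proceeds as follows. Applying the definition of $\partial$,
\[
\partial\big(\phir(f)\big) = \Psi\,\frac{d}{dt}\big(\phir(f)\big) = \Psi\,R'(t)\,\phir\!\left(\frac{df}{dt}\right).
\]
For the other composition, I would use that $\phir$ is a ring endomorphism (so it respects products) together with the Julia equation:
\[
\phir\big(\partial(f)\big) = \phir\!\left(\Psi\,\frac{df}{dt}\right) = \phir(\Psi)\,\phir\!\left(\frac{df}{dt}\right) = R'(t)\,\Psi\,\phir\!\left(\frac{df}{dt}\right).
\]
Since the two right-hand sides are visibly equal, $\partial\circ\phir = \phir\circ\partial$ on all of $\F$, which is the desired conclusion.

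There is no real obstacle here: the statement is a one-line algebraic identity once the chain rule and the Julia equation are in hand, and both are already available in the excerpt. The only point requiring a modicum of care is checking that all three operators — $\phir$, $\frac{d}{dt}$, and multiplication by $\Psi$ — are genuinely defined on the full field $\F$ (not merely on $\C((t))$), so that the manipulations above are legitimate; but this was precisely arranged by the construction of $\F$ and the extensions of $\phir$ and $\frac{d}{dt}$ to it in Lemma~\ref{lemma:construction-F}. It is also worth verifying that $\partial = \Psi\frac{d}{dt}$ is indeed a derivation on $\F$, which is immediate since multiplying a derivation by an element of the field yields another derivation (the Leibniz rule is preserved).
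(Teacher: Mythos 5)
Your proof is correct and follows essentially the same route as the paper's: both compute $\partial(\phir(f))$ via the chain rule $\frac{d}{dt}\circ\phir = R'\,\phir\circ\frac{d}{dt}$ and then invoke the Julia equation $\phir(\Psi)=R'\Psi$ together with multiplicativity of $\phir$ to match it with $\phir(\partial(f))$. The paper condenses this into a single chain of equalities, whereas you compute the two compositions separately and compare, but the mathematical content is identical.
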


\begin{proof}
For any $f\in\F$ we have:
$(\partial\circ\phir)(f)=\Psi\phir(f')R'=\phir(f')\phir(\Psi)=\phir(f'\Psi)=(\phir\circ\partial)(f)$.
\end{proof}

We call $\K$ the field generated by $\C(t)$, the function $\Psi$ and its derivatives with respect to $d/dt$.
We are exactly in the situation considered in \cite{di_vizio_difference_2021}, namely:
we have a base field $\K$ contained in a larger field $\F$;
the endomorphism $\phir$ acts on both $\K$ and $\F$ and in both cases its subfield
of constants is $\C$;
the derivative $\partial:=\Psi\frac{d}{dt}$ stabilizes $\K$ in $\F$ and commutes to $\phir$.
Therefore we can study the differential transcendence of $f$ using \cite[Corollary 2.6.7]{di_vizio_difference_2021},
which is a restatement of \cite{hardouin_hypertranscendance_2008}:

\begin{prop}\label{prop:DiffAlgHanoi}
Let $\F$ be a field equipped with an endomorphism $\Phi_R$ and a derivation $\partial$, such that
$\Phi\circ\partial=\partial\circ\Phi$ and let $\K$ be a sub-field of $\F$
stable by $\Phi$ and $\partial$ and containing the field $C$ of invariant elements of $\F$
with respect to $\Phi$.
Let $b\in\K$, $b\neq 0$, and $f\in\F$ be  a solution of $\phir(y)=y+b$. The following statements are equivalent:
\begin{enumerate}
  \item There exist $n\geq 0$, $\lambda_0,\dots,\lambda_n\in  C$, not all zero, and $g\in\K$ such that $\lambda_0b+\lambda_1\partial(b)+\dots+\lambda_n \partial^n(b)=\phir(g)-g$.
  \item There exist $n\geq 0$, $\lambda_0,\dots,\lambda_n\in  C$, not all zero, such that $\lambda_0f+\lambda_1\partial(f)+\dots+\lambda_n \partial^n(f)\in\K$.
  \item $f$ satisfies a linear differential equation in $\partial$ with coefficients in
  $\K$, i.e., $f$ is $\partial$-finite over $\K$.
  \item $f$ satisfies an algebraic differential equation in $\partial$ with coefficients in
  $\K$, i.e., $f$ is differentially algebraic over $\K$.
\end{enumerate}
\end{prop}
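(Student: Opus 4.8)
\emph{Plan.} I would establish the cycle $(1)\Rightarrow(2)\Rightarrow(3)\Rightarrow(4)\Rightarrow(1)$. The first three implications are elementary manipulations that use only the commutation $\partial\circ\phir=\phir\circ\partial$ and the fact that $C$ is the field of $\phir$-invariants of $\F$; all the weight falls on the arrow $(4)\Rightarrow(1)$, which is where difference Galois theory enters.

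\emph{The easy arc.} For $(1)\Rightarrow(2)$, applying $\partial^{k}$ to $\phir(f)-f=b$ and using the commutation gives $\phir(\partial^{k}f)-\partial^{k}f=\partial^{k}b$ for every $k$. Hence, for the operator $L=\sum_{k=0}^{n}\lambda_{k}\partial^{k}$ supplied by~(1) (its coefficients being $\phir$-invariant), one computes $\phir\bigl(L(f)\bigr)-L(f)=\sum_{k}\lambda_{k}\partial^{k}b=\phir(g)-g$, so that $L(f)-g$ is $\phir$-invariant and therefore lies in $C\subseteq\K$; thus $L(f)=g+\bigl(L(f)-g\bigr)\in\K$, which is~(2). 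For $(2)\Rightarrow(3)$, if $m$ is the top index with $\lambda_{m}\neq0$ and $L(f)=h\in\K$, then $f$ satisfies an inhomogeneous linear $\partial$-equation of order $m$ over $\K$, hence a homogeneous one of order $m+1$, so $f$ is $\partial$-finite over $\K$. Finally $(3)\Rightarrow(4)$ is immediate, a linear differential equation being in particular an algebraic one.

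\emph{The hard arc $(4)\Rightarrow(1)$.} Here I would attach to $\phir(y)=y+b$ a parametrized Picard--Vessiot extension, taking the $(\phir,\partial)$-ring $\K\{f\}=\K[f,\partial f,\partial^{2}f,\dots]\subseteq\F$, after extending the field of constants $C$ to a differentially closed field $\mathcal{U}$ so that the Galois formalism applies. Every Galois automorphism $\sigma$ fixes $\K$ and commutes with $\phir$, so $\phir(\sigma f-f)=\sigma f-f$ forces $\sigma f=f+c_{\sigma}$ with $c_{\sigma}$ a constant; this embeds the Galois group $G$ as a $\partial$-algebraic subgroup of the additive group $\mathbb{G}_{a}$. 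By Cassidy's classification of the $\partial$-subgroups of $\mathbb{G}_{a}$, either $G=\mathbb{G}_{a}$, in which case the fundamental theorem gives $\partial\text{-}\mathrm{tr.deg}_{\K}\K\{f\}=1$, i.e.\ $f$ is differentially transcendental over $\K$, contradicting~(4); or $G$ is the zero locus of a nonzero linear differential operator $L=\sum_{k}\lambda_{k}\partial^{k}$ with constant coefficients $\lambda_{k}$ (which descend into $C$). In the latter case $L(c_{\sigma})=0$ for all $\sigma$, so $\sigma\bigl(L(f)\bigr)=L(f+c_{\sigma})=L(f)$, and the Galois correspondence places $L(f)$ in $\K$; applying $\phir-\mathrm{id}$ then yields $\sum_{k}\lambda_{k}\partial^{k}b=\phir\bigl(L(f)\bigr)-L(f)=\phir(g)-g$ with $g:=L(f)\in\K$, which is exactly~(1).

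\emph{Main obstacle.} The crux is the arc $(4)\Rightarrow(1)$. It rests on three nontrivial inputs: a well-behaved parametrized Picard--Vessiot theory together with a controlled extension of constants to a $\partial$-closed field (one must check that no new $\phir$-invariants or spurious differential relations are created, so that the conclusion descends back to $\K$ and $C$); Cassidy's structure theorem for the $\partial$-subgroups of $\mathbb{G}_{a}$; and the parametrized Galois correspondence tying differential algebraicity of $f$ to the $\partial$-dimension of $G$. This is precisely the package cited as a black box in \cite[Corollary~2.6.7]{di_vizio_difference_2021}, itself a restatement of \cite{hardouin_hypertranscendance_2008}; reproving it in full would require redeveloping that Galois machinery, which is why it is invoked rather than reestablished here.
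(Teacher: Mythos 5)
Your proposal matches the paper's own treatment: the paper does not prove this proposition either, but invokes it as a black box from \cite[Corollary 2.6.7]{di_vizio_difference_2021} (a restatement of \cite{hardouin_hypertranscendance_2008}), which are exactly the references you cite for the hard arc $(4)\Rightarrow(1)$, and your sketch of the Galois-theoretic content (Picard--Vessiot ring, embedding of the group into $\mathbb{G}_a$, Cassidy's classification, descent of constants) is an accurate pr\'ecis of what that black box contains. Your elementary arguments for $(1)\Rightarrow(2)\Rightarrow(3)\Rightarrow(4)$ coincide with the computation given in the paper's Remark~\ref{rmk:DiffAlgHanoi} on the equivalence of the first three assertions, so the two approaches are essentially the same.
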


\begin{rmk}\label{rmk:DiffAlgHanoi}
We are not proving the proposition above, but we can quickly comment on the equivalence between the first three assertions.
Since $ \Phi$ and $\partial$ commute, if $f$ satisfies a differential equation of order $n$ over $\K$, the system of functional
equations
    \begin{align*}
      &  \Phi(y)=y+b \\
      &  \Phi(\partial(y))= \partial(y)+\partial(b)\\
      & \vdots \\
      &  \Phi(\partial^n(y))= \partial^n(y)+\partial^n(b)
    \end{align*}
has a set of solutions $f,\partial(f),\dots,\partial^n(f)$, which is algebraically dependent over $\K$.
For any $\lambda_0,\dots,\lambda_n\in  C$, by the $\mathbb{C}$-linearity of $ \Phi$,
the element $\widetilde f=\lambda_0f+\lambda_1\partial(f)+\dots+\lambda_n \partial^n(f)$ of $\F$ verifies the functional equation
$ \Phi(\widetilde f)=\widetilde f+\lambda_0b+\lambda_1\partial(b)+\dots+\lambda_n \partial^n(b)$.
If $g\in\K$ is as in the first assertion above, then $\widetilde{f}$ and $g$ satisfy the
same functional equation, therefore $\widetilde{f}-g\in C$.
Summarizing, $g\in\K$ if and only if $\widetilde{f}\in\K$ if and only if $f$ is solution of a linear differential equation in $\K$.
The strength of this result is therefore the equivalence between (each one of) the first
three assertions and the fourth one, which requires some Galoisian insight.
\end{rmk}

Before proving Theorem~\ref{thmINTRO:main-additive}, we prove the following lemma,
in which we use the definitions and properties stated in \S\ref{subsec:DiffAlgebra}:

\begin{lemma}\label{lemma:derivatives}
We assume, as above, that $\Psi$ is differentially transcendental over $\C(t)$.
Let $L\subset\C((t))$ be a differentially algebraic extension of $\C(t)$.
Then for any $f\in L$ and any integer $n\geq 2$, we have $\partial^{n}f\in f'\Psi_0^{n-1}\Psi_{n-1}+L[\Psi_0,\ldots,\Psi_{n-2}]$.
\end{lemma}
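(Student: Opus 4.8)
The plan is to prove the membership statement by induction on $n$, differentiating the explicit shape of $\partial^n f$ at each step and tracking which $\Psi_i$ can occur. I will use throughout that $\partial=\Psi_0\frac{d}{dt}$, that $L$ is a differential subfield of $\C((t))$ so that $f^{(k)}\in L$ for every $f\in L$ and every $k\geq 0$, and that $\frac{d}{dt}$ sends $\Psi_i$ to $\Psi_{i+1}$ by the definition $\Psi_i=\Psi^{(i)}$.

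For the base case $n=2$ I would compute directly
\[
\partial^2 f=\Psi_0\tfrac{d}{dt}(\Psi_0 f')=\Psi_0(\Psi_1 f'+\Psi_0 f'')=f'\Psi_0\Psi_1+\Psi_0^2 f''.
\]
Since $f''\in L$, the last summand lies in $L[\Psi_0]$, so $\partial^2 f\in f'\Psi_0^{\,1}\Psi_1+L[\Psi_0]$, which is exactly the claim for $n=2$.

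For the inductive step I would assume $\partial^n f=f'\Psi_0^{\,n-1}\Psi_{n-1}+P_n$ with $P_n\in L[\Psi_0,\ldots,\Psi_{n-2}]$ and apply $\partial=\Psi_0\frac{d}{dt}$. Differentiating the leading term and multiplying by $\Psi_0$ gives
\[
\Psi_0\tfrac{d}{dt}\bigl(f'\Psi_0^{\,n-1}\Psi_{n-1}\bigr)=f'\Psi_0^{\,n}\Psi_n+f''\Psi_0^{\,n}\Psi_{n-1}+(n-1)f'\Psi_0^{\,n-1}\Psi_1\Psi_{n-1},
\]
whose first summand is exactly the new leading term $f'\Psi_0^{(n+1)-1}\Psi_{(n+1)-1}$, while the other two involve only $\Psi_0,\ldots,\Psi_{n-1}$ with coefficients $f',f''\in L$, hence lie in $L[\Psi_0,\ldots,\Psi_{n-1}]$. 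For the remaining contribution $\Psi_0\frac{d}{dt}(P_n)$, I would note that $\frac{d}{dt}$ preserves $L$ and raises every $\Psi_i$ with $i\leq n-2$ occurring in $P_n$ to $\Psi_{i+1}$ with $i+1\leq n-1$, so that $\frac{d}{dt}(P_n)\in L[\Psi_0,\ldots,\Psi_{n-1}]$ and multiplication by $\Psi_0$ keeps it there. Adding the two pieces yields $\partial^{n+1}f\in f'\Psi_0^{\,n}\Psi_n+L[\Psi_0,\ldots,\Psi_{n-1}]$, completing the induction.

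There is no genuine obstacle in the computation itself; the only delicate point is the bookkeeping guaranteeing that the top derivative $\Psi_n$ enters solely through the single monomial $f'\Psi_0^{\,n}\Psi_n$. It is here that the standing hypothesis plays its role: by Lemma~\ref{lemma:DiffAlg}(2) applied to the differentially algebraic extension $L/\C(t)$, the differential transcendence of $\Psi$ over $\C(t)$ upgrades to differential transcendence over $L$, so that $\Psi_0,\Psi_1,\ldots$ are algebraically independent over $L$ and $L[\Psi_0,\ldots,\Psi_{n-1}]$ is a genuine polynomial ring. This independence is what makes the stated decomposition canonical and ensures that the coefficient $f'$ of the leading monomial cannot be absorbed into the lower-order part, which is the feature the lemma is designed to record for later use.
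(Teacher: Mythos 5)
Your proof is correct and follows essentially the same route as the paper's: the same base case computation $\partial^2 f = f'\Psi_0\Psi_1 + f''\Psi_0^2$, followed by induction on $n$ via the Leibniz rule applied to the leading term $f'\Psi_0^{n-1}\Psi_{n-1}$, with the lower-order part absorbed into $L[\Psi_0,\ldots,\Psi_{n-1}]$. The paper likewise opens by invoking Lemma~\ref{lemma:DiffAlg} to get differential transcendence of $\Psi$ over $L$, matching your closing remark on why the decomposition is meaningful.
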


\begin{proof}
First of all, Lemma~\ref{lemma:DiffAlg} implies
that $\Psi$ is differentially transcendental over $L$.
As far as the first statement is concerned, by definition, $\partial f=f'\Psi_0$, with $f'\in L$.
For $n=2$, we have:
$\partial^{2}f=f''\Psi_0^2 +f'\Psi_{1}\Psi_0 $. The general statement
is proved by induction on $n$ using the Leibniz formula,
since
$\partial(f'\Psi_0^{n-1}\Psi_{n-1})=f'\Psi_0^n\Psi_n+f'\partial(\Psi_0^{n-1})\Psi_{n-1}
+f''\Psi_0^n\Psi_{n-1}$.
\end{proof}

\begin{proof}[Proof of Theorem~\ref{thmINTRO:main-additive}]
It is enough to prove that if $f\in\C((t))$ is differentially algebraic over $\C(t)$, then $f\in\C(t)$.
The proof is divided in two steps: first we prove that $f'\in\C(t)$ and then we deduce that $f\in\C(t)$.
\par
We start with some elementary but crucial remarks.
If $b=0$, then $f\in\C\subset\C(t)$, therefore we can suppose $b\neq 0$.
If $f$ is differentially algebraic over $\C(t)$ then $f$ is differentially algebraic over $\K$, with respect to $\partial$.
The commutativity of $\partial$ and $\phir$ implies that for any positive integer $n$ we have
$\phir(\partial^n(f))=\partial^n (f)+\partial^n(b)$. Moreover, the definition of $\phir$ plus the chain rule imply that
    \begin{equation}\label{eq:z'-functeq}
    \phir(f')R'=f'+b'.
    \end{equation}
Since $b\neq 0$, $f$ is not a constant and hence $f'\neq 0$.
\par
It follows from the second assertion of Proposition~\ref{prop:DiffAlgHanoi} that there exist
an integer $n\geq 0$, $\lambda_0,\dots,\lambda_n\in\C$, not all zero, and
$g\in\K$ such that:
\begin{equation}\label{eq:telescoper-1}
    \sum_{i=0}^n\lambda_i\partial^i(f)=g.
\end{equation}
We can suppose that $\lambda_n\neq 0$.
If $n=0$, we have $f=\frac{g}{\lambda_0}$,
therefore $f\in\K$. By assumption, $\K/\C(t)$ is a purely differentially transcendental extension
and $f$ is differentially algebraic over $\C(t)$, hence we conclude that $f\in\C(t)$ (see Lemma~\ref{lemma:DiffAlg}) .
This proves the whole theorem for $n=0$, hence we suppose from now on that $n\geq 1$.
\par
We are now ready to actually prove the theorem.

\begin{step}
If $f\in\C((t))$ is differentially algebraic over $\C(t)$ then $f'\in\C(t)$.
\end{step}

Let $L$ be the differentially algebraic extension of $\C(t)$ generated by $f$ and its derivatives.
Notice that Lemma~\ref{lemma:DiffAlg} implies that $\Psi$ is differentially transcendental over $L$.
Lemma~\ref{lemma:derivatives} implies that \[g\in\K\cap\left(\lambda_n f'\Psi_0^{n-1}\Psi_{n-1}+L[\Psi_0,\dots,\Psi_{n-2}]\right).\]
Since the $\Psi_i$'s are algebraically independent both on $\C(t)$ and $L$, we can apply $\frac{\partial}{\partial\Psi_0}$
and $\frac{\partial}{\partial\Psi_{n-1}}$ to $g$ and we obtain:
    \[
    \lambda_n f'=\frac{1}{(n-1)!}\left(\frac{\partial}{\partial\Psi_0}\right)^{n-1}
    \left(\frac{\partial}{\partial\Psi_{n-1}}\right)(g)\in\K.
    \]
Since $\lambda_n\neq 0$,
we have proved that $f'$, in addition to being differentially algebraic over $\C(t)$, belongs to $\K$.
We conclude thanks to Lemma~\ref{lemma:DiffAlg} that $f'\in\C(t)$.

\begin{step}
If $f'\in\C(t)$, then actually $f\in\C(t)$.
\end{step}

Since $f'\in\C(t)$, then $L=\C(t)(f)$ and there exist an integer
$I\geq 0$, $c_1,\dots,c_I,a_1,\dots,a_I\in\C$ and $w\in\C(t)$, such that:
    \[
    f'=\frac{c_1}{t-a_1}+\dots+\frac{c_I}{t-a_I}+w'.
    \]
If we set $s_i=\frac{R(t)-a_i}{t-a_i}$, then
    \[
    0=\phir(f')R'(t)-f'-b'=\sum_{i=1}^Ic_i\frac{s_j'}{s_j}+\phir(w')R'(t)-w'-b'.
    \]
The summands of $\sum_{i=1}^Ic_i\frac{s_j'}{s_j}$ are logarithmic derivatives of rational functions
and $\phir(w')R'(t)-w'-b'$ has zero residue at any pole,
because it admits a primitive in $\C(t)$.
Therefore these two terms are both equal to zero. In particular, $\phir(w')R'(t)=w'+b'$.
We deduce that $\phir(f'-w')R'(t)=f'-w'$, and finally that $\phir((f'-w')\Psi_0)=(f'-w')\Psi_0$.
Since $\F^{\phir}=\C$, there exists $c\in\C$ such that
$f'-w'=\frac{c}{\Psi_0}$. If $c\neq 0$ we obtain a contradiction since $f'-w'\in\C(t)$, while $\Psi_0$ is differentially transcendental over $\C(t)$.
Hence $c=0$ and $f'=w'$. The latter equality implies that $f\in\C(t)$, since by construction $w\in\C(t)$.
\end{proof}

\subsection{\texorpdfstring{Iterative difference equations of the form $\phir(y)=ay$}{y(R)=ay}.
Proof of Theorem~\ref{thmINTRO:main-multiplicative}}
\label{sec:main-homogenous}

Let us consider a functional equation $\phir(y)=ay$, with $a\in\C(t)$.
We recall the statement of Theorem~\ref{thmINTRO:main-multiplicative}, whose proof is the object of this
subsection:

\mainE*


We start with a lemma for
functional equations of order $1$ with constant coefficients, which partially explains the conclusion of the theorem
above.

\begin{lemma}\label{lemma:mainthm-multiplicative}
Under the assumptions $(\cDT)$, in the notation of Theorem~\ref{thmINTRO:main-multiplicative}, let $a\in\C$, $a\neq 0$.
Then either $a=1$ and $z\in\C$ or
$z$ is differentially transcendental over $\C(t)$ and there exists a nonzero $c\in\C$ such that
$\frac{z'}{z}=\frac{c}{\Psi}$.
\end{lemma}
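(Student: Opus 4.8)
The plan is to pass to the logarithmic derivative $u=z'/z$, which turns the homogeneous equation $\phir(z)=az$ into an equation that can be directly compared with the defining equation $\phir(\Psi)=R'\Psi$ of the iterative logarithm. We may assume $z\neq 0$, so that $u=z'/z\in\C((t))\subset\F$ is well defined. Differentiating $\phir(z)=az$ and using both that $a\in\C$ is constant and the chain rule $\phir(z)'=\phir(z')R'$, I would first obtain $\phir(z')R'=az'$; dividing this by $\phir(z)=az$ then gives
\[
\phir(u)=\frac{\phir(z')}{\phir(z)}=\frac{az'/R'}{az}=\frac{u}{R'},
\]
so that $u$ is a solution of $\phir(y)=(R')^{-1}y$.

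The crux of the argument is that $u$ and $\Psi$ solve reciprocal equations. Indeed, since $\phir$ is a field endomorphism and $\phir(\Psi)=R'\Psi$, we get
\[
\phir(u\Psi)=\phir(u)\,\phir(\Psi)=(R')^{-1}u\cdot R'\Psi=u\Psi,
\]
so that $u\Psi$ is invariant under $\phir$. As the sub-field of $\phir$-invariant elements of $\F$ is exactly $\C$ (recorded in the setup of $(\cDT)$), there exists $c\in\C$ with $u\Psi=c$, that is $\frac{z'}{z}=\frac{c}{\Psi}$.

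It then remains to examine the two cases. If $c=0$ then $z'=0$, so $z\in\C$, and substituting the nonzero constant $z$ into $\phir(z)=az$, where $\phir$ fixes constants, forces $z=az$ and hence $a=1$. If $c\neq 0$ I would argue by contradiction: in this case $z'\neq 0$ (otherwise $z'/z=0$ would force $c=0$), so the identity $\frac{z'}{z}=\frac{c}{\Psi}$ can be rearranged as $\Psi=cz/z'$. Were $z$ differentially algebraic over $\C(t)$, then $z'$ and therefore $cz/z'$ would lie in the differential field generated by $z$ over $\C(t)$, which is differentially algebraic over $\C(t)$; this would make $\Psi$ differentially algebraic over $\C(t)$, contradicting assumption $(\cDT)$. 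Hence $z$ is differentially transcendental over $\C(t)$, which completes the analysis. The computation is short, and the only two places where the hypotheses genuinely enter are the use of $\F^{\phir}=\C$ to produce the constant $c$, and the differential transcendence of $\Psi$ to exclude the second alternative; I expect no serious obstacle beyond bookkeeping the case $c\neq 0$ to ensure $z'\neq 0$.
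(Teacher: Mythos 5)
Your proof is correct and takes essentially the same route as the paper: both arguments differentiate $\phir(z)=az$, use the chain rule together with $\F^{\phir}=\C$ to produce the key identity $\Psi z'=cz$ (your $u\Psi=c$ is the same relation), and then invoke the differential transcendence of $\Psi$ from $(\cDT)$ to rule out $z$ being differentially algebraic when $c\neq 0$. The only cosmetic difference is that you organize the cases by $c=0$ versus $c\neq 0$, while the paper splits on $a=1$ versus $a\neq 1$; these are equivalent.
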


\begin{proof}
For $a=1$ the statement is trivial, so let us suppose that $a\neq 1$.
Differentiating $\phir(z)=az$, one sees that $\Psi z'$ is solution of the same functional equation, i.e. $\phir(\Psi z')=a(\Psi z')$.
Using the fact that $\F^{\phir}=\C$, we deduce that there exists $c\in\C$ such that
$\Psi z'=cz$.
If $z$ is differentially algebraic over $\C(t)$,
then $\frac{z'}{z}=c\Psi^{-1}\in\K$ is also differentially algebraic over $\C$.
By assumption, $\Psi$ is differentially transcendental over $\C(t)$, we deduce that $c=0$,
hence $z$ is a constant
in contradiction with the fact that $a\neq 1$.
\end{proof}


\begin{proof}[Proof of Theorem~\ref{thmINTRO:main-multiplicative}]
From the lemma above, if $a=1$, then $z\in\C\subset\C(t)$.
If $a=0$, then $f=0\in\C(t)$.
Moreover, if $a\in\C\setminus\{0,1\}$ then $z$ is differentially algebraic
over $\K$, by Lemma~\ref{lemma:mainthm-multiplicative}.
Therefore let us suppose that $a$ is not a constant and that $z$ is differentially algebraic over $\C(t)$, and hence over $\K$.
By taking the logarithmic derivative we find that
    \[
    \phir\l(\frac{\partial z}{z}\r)=\frac{\partial z}{z}+\frac{\partial a}{a},
    \hbox{~or equivalently that~}
    \phir\l(\frac{z'}{z}\r)R(t)'=\frac{z'}{z}+\frac{a'}{a}.
    \]
Since $a$ is not a constant, the logarithmic derivative $\frac{a'}{a}$ is nonzero, hence
$\frac{z'}{z}\neq 0$.
The proof follows the structure of the proof of Theorem~\ref{thmINTRO:main-additive} replacing $f$ with $\frac{z'}{z}$.
\setcounter{step}{0}
\begin{step}
If $z$ is differentially algebraic over $\K$ then $\frac{z'}{z}\in\C(t)$.
\end{step}

Notice that $\frac{\partial z}{z}$ is differentially algebraic over $\K$
 with respect to $\partial$.
It follows from Proposition~\ref{prop:DiffAlgHanoi}
that there exist an integer $n\geq 0$, $\lambda_0,\dots,\lambda_n\in\C$, not all zero, and $g\in\K$ such that:
    \begin{equation}\label{eq:identity-z'/z}
    \lambda_0\frac{\partial z}{z}+\dots
    +\lambda_n\partial^n\l(\frac{\partial z}{z}\r)=g\in\K\,.
    \end{equation}
We can suppose that $\lambda_n\neq 0$.
If $n=0$ then $\frac{z'}{z}=\frac{g}{\lambda_0\Psi_0}$, therefore
$\frac{g}{\lambda_0\Psi_0}\in\K$ is differentially algebraic over $\C(t)$.
Since $\K/\C(t)$ is a purely differential extension, we conclude that
$\frac{g}{\lambda_0\Psi_0}\in\C(t)$, and hence $\frac{z'}{z}\in\C(t)$.
\par
Let us consider the case $n\geq 1$. Let $L$ be the differentially algebraic extension of $\C(t)$ generated by
$z$ and all its derivatives.
Then $\frac{\partial z}{z}=\Psi_0\frac{z'}{z}\in\Psi_0\,L$ and $\Psi_0$ is differentially transcendental over $L$
by Lemma~\ref{lemma:DiffAlg}.
\par
One proves recursively that $\partial^n\l(\frac{\partial z}{z}\r)\in \frac{z'}{z}\Psi_0^n\Psi_n+L[\Psi_0,\dots,\Psi_{n-1}]$,
since on one hand we have:
    \[
    \partial\l(\frac{\partial z}{z}\r)=\Psi_0\l(\Psi_0\frac{z'}{z}\r)'=\frac{z'}{z}\Psi_0\Psi_1+\l(\frac{z'}{z}\r)'\Psi_0^2,
    \]
and on the other hand, as in Lemma~\ref{lemma:derivatives}, we have:
    \[
    \partial\l(\frac{z'}{z}\Psi_0^{n}\Psi_n\r)=\frac{z'}{z}\Psi_0^{n+1}\Psi_{n+1}+\frac{z'}{z}n\Psi_0^n\Psi_1\Psi_n
    +\l(\frac{z'}{z}\r)'\Psi_0^{n+1}\Psi_n\,.
    \]
This proves in particular that the left hand side of
\eqref{eq:identity-z'/z} belongs to $\lambda_n\frac{z'}{z}\Psi_0^n\Psi_n+L[\Psi_0,\dots,\Psi_{n-1}]$, hence:
    \[
    \lambda_n \frac{z'}{z}=\frac{1}{n!}\left(\frac{\partial}{\partial\Psi_0}\right)^{n}
    \left(\frac{\partial}{\partial\Psi_{n}}\right)(g)\in\K.
    \]
Since $\lambda_n\neq 0$, we have proved that $\frac{z'}{z}$, in addition to being
differentially algebraic over $\C(t)$,
belongs to $\K$.
We conclude thanks to Lemma~\ref{lemma:DiffAlg} that $\frac{z'}{z}\in\C(t)$.

\begin{step}
If $\frac{z'}{z}\in\C(t)$ then $z$ is algebraic over $\C(t)$.
\end{step}

Notice that we have $\phir\l(\frac{z'}{z}\r)R'(t)=\frac{z'}{z}+\frac{a'}{a}$. We proceed exactly as in the proof of Theorem~\ref{thmINTRO:main-additive}.
Indeed, there exist an integer $I\geq 0$, $c_1,\dots,c_I,a_1,\dots,a_I\in\C$ and $w\in\C(t)$, such that:
    \[
    \frac{z'}{z}=\frac{c_1}{t-a_1}+\dots+\frac{c_I}{t-a_I}+w'.
    \]
If we set $s_i=\frac{R(t)-a_i}{t-a_i}$ then
    \[
    0=\phir\l(\frac{z'}{z}\r)R'(t)-\frac{z'}{z}-\frac{a'}{a}=
    \sum_{i=1}^Ic_i\frac{s_i'}{s_i}-\frac{a'}{a}+\phir(w')R'(t)-w'.
    \]
Since $\sum_{i=1}^Ic_i\frac{s_i'}{s_i}-\frac{a'}{a}$ is a sum of logarithmic derivatives
of rational functions and $\phir(w')R'(t)-w'$ has zero residue at any pole,
each one of these two terms must be zero, therefore $\phir(w')R'(t)=w'$.
This implies that $\phir(w'\Psi_0)=w'\Psi_0$ and hence that $w'\in\C(t)\cap\C\Psi_0^{-1}=\{0\}$, therefore
$\frac{z'}{z}=\frac{c_1}{t-a_1}+\dots+\frac{c_I}{t-a_I}$. Let us now consider the identity  $\sum_{i=1}^Ic_i\frac{s_i'}{s_i}=\frac{a'}{a}$.
We consider a basis $l_1,\dots,l_J$ of the $\Q$-vector space generated by $c_1,\dots,c_I$.
Therefore there exist $\lambda_i,j\in\Q$ such that
$c_i=\sum_{j=0}^J\lambda_{i,j}l_j$ so that
    \[
    \frac{a'}{a}=\sum_{i=1}^Ic_i\frac{s_i'}{s_i}=\sum_{j=1}^Jl_j\sum_{i=1}^I\lambda_{i,j}\frac{s_i'}{s_i}\,.
    \]
Since $a$ is not constant, $\frac{a'}{a}$ has at least a pole $t_0$ with a nonzero residue in $\Z$.
Calculating the residue at $t_0$ on the right hand side we find that there exists a non-trivial $\Q$-linear combination
of the $l_j$'s which belongs to $\Z$. This means that we can assume that $l_1=1$ and that $\Q$ is not contained in the
$\Q$-vector space generated by $l_2,\dots,l_J$.
In other words, $\sum_{j=2}^Jl_j\sum_{i=1}^I\lambda_{i,j}\frac{s_i'}{s_i}$ must have a rational residue at each pole,
but this cannot happen because $l_2,\dots,l_J$ are linearly independent over $\Q$.
We conclude that $\sum_{i=1}^I\lambda_{i,j}\frac{s_i'}{s_i}=0$ for any $j=2,\dots,J$,
as well as $\sum_{j=2}^Jl_j\sum_{i=1}^I\lambda_{i,j}\frac{s_i'}{s_i}=0$.
Hence $ \frac{a'}{a}=\sum_{i=1}^I\lambda_{i,1}\frac{s_i'}{s_i}$, with $\lambda_{i,1}\in\Q$.
Therefore there exists a constant $c\in\C$ such that $a=c\prod_{i=1}^I\l(\frac{R(t)-a_i}{t-a_i}\r)^{\lambda_{i,1}}\in\C(t)$.
Since $\prod_{i=1}^I(t-a_i)^{\lambda_{i,1}}$ is an algebraic function,
we can replace $z$ by $\widetilde{z}=z\prod_{i=1}^I(t-a_i)^{-\lambda_{i,1}}$.
Notice that $\widetilde z$ is differentially algebraic over
$\C(t)$ and satisfies $\phir(\widetilde z)=c\widetilde z$.
Therefore, Lemma~\ref{lemma:mainthm-multiplicative} implies that $\widetilde z$ is a constant and, hence, that $c=1$.
Therefore we have proved that $z=\widetilde z\prod_{i=1}^I(t-a_i)^{\lambda_{i,1}}$ is an algebraic function, as claimed.
\end{proof}

\section{Main result (and the algebraic case)}
\label{sec:Main}

The main purpose of this section is proving the first part of Theorem~\ref{thmINTRO:main-0},
whose statement we recall for the reader's  convenience:

\mainzero*


Like Theorem~\ref{thmINTRO:b=0} and Theorem~\ref{thmINTRO:a=1},
thanks to Proposition~\ref{prop:Psi},
we will deduce the theorem above from an analogous statement
with the extra assumption that $\Psi$ is differentially transcendental:

\begin{proof}[Idea of the proof of Theorem~\ref{thmINTRO:main-0}]
The proof is quite similar to the proofs of Theorem~\ref{thmINTRO:a=1} and Theorem~\ref{thmINTRO:b=0} (see \S\ref{sec:proof-strategy-rational}).
Using the same notation as in \S\ref{sec:proof-strategy-rational},
if $\Psi$ is differentially algebraic over $\C(t)$
there exists a rational function $\sigma$ such that $\sigma\circ\tau$ is the identity.
If we set $s:=\tau(t)$, $g:=f\circ\sigma$, $\alpha:=a\circ\sigma$ and $\beta=b\circ\sigma$, we find that
$g(s^d)=\alpha(s)g(s)+\beta(s)$, with $\alpha(s),\beta(s)\in\C(s)$.
We conclude once more
applying the known results on Mahler equations that
$f$ is either algebraic or differentially transcendental over $\C(t)$. To conclude the proof in this case we need to show that $f$ satisfies an homogeneous differential equation of order $1$ with rational coefficients. We proceed as we have already done in other proofs, namely we consider the minimal monic polynomial of $f$ over $\C(t)$, so that
$f^N+\sum_{i=0}^{N-1}p_i(t)f^i=0$. Applying $\phir$ and dividing by $a^N$ we obtain:
    \[
    0=\frac{(af+b)^N}{a^N}+\sum_{i=0}^{N-1}\phir(p_i(t))\frac{(af+b)^i}{a^N}=f^N+\left(N\frac{b}{a}+
    \frac{\phir(p_{N-1})}{a}\right)f^{N-1}+\dots,
    \]
which is still a minimal monic polynomial. Comparing the coefficients of $f^{N-1}$, we obtain that
$\phir(p_{N_1}(t))=ap_{N-1}(t)+Nb$, therefore
$p_{N-1}(t)$ and $Nf$ are solution of the same functional equation and $p_{N-1}-Nf$ is a solution of $\phir(y)=ay$. It follows from Theorem~\ref{thmINTRO:b=0}, that there exists $\alpha \in\C(t)$ such that $(p_{N-1}-Nf)'=\alpha(p_{N-1}-Nf)$,
which provides a differential equation for $f$ of the claimed form.
\par
Finally, we only need to prove
Theorem~\ref{thmINTRO:main-0} under the assumption $(\cDT)$, i.e., when
$\Psi$ is differentially transcendental over $\C(t)$, which is the main purpose of this section.
\end{proof}

\subsection{Algebraic setting}

In \S\ref{sec:mainresults-1}, we  studied functional equations whose space of solutions
is completely determined by a given solution in $\C((t))$ up to either a multiplicative or an additive constant.
This means that the space of solutions is contained in $\C((t))$, which is a field.
Although this is true in many applications, this is a major assumption,
since a general linear iterative functional equation does not have a
full set of solutions in a field.
In general, the solutions of $\phir(y)=ay$ have no reason to be Laurent series, as we have already seen with the solutions to the equation $\phir(y)=R'(t)y$.
As a matter of fact, they may even not be Puiseux series.
However, in order to describe the differential properties of $f$
we need to grasp some properties of the whole affine space of solutions of functional equations
of the form $\phir(y)=ay+b$.
For this reason we need to construct a $\K$-algebra equipped with an extension of $\phir$ and $\partial$ in which we can find the solutions that we need.
To do so, we will need a series of results from \cite{hardouin_differential_2008},
\cite{wibmer_existence_2012} and \cite{ovchinnikov_galois_2015}.
The theory is surveyed with similar notation and in view of similar applications in \cite[\S2]{bostan_differential_2020},
therefore we will refer to \emph{loc.cit.} instead of the original references.

\begin{notation}\label{notation:argebraic-R}
To be able to use the results in  \cite[\S2]{bostan_differential_2020}
we need to enlarge our base field, to ensure that $\phir$ is an isomorphism and hence
the existence of an abstract algebra containing ``enough''
solutions of our functional equations.
We consider the field $\bF$ of Puiseux series, i.e., the direct limit over $p\in\Z$, $p\geq 1$,
of the fields of Laurent series in $t^{1/p}$, with the natural inclusions.
The field $\bF$ is the algebraic closure of $\C((t))$. We call $\bK$ the algebraic closure of $\C(t)$ in $\bF$ and we fix a nonzero element $R=\sum_{n\geq 1}r_nt^{n/p}\in\bK$.
Since we can always consider a variable change $s=t^{1/p}$, without changing the nature of
our functional equation and the differential properties of the solutions, we will suppose
that $R\in\bK\cap\C[[t]]$: this simplifies  the presentation slightly since we won't have
to derive $t^{1/p}$ with respect to $t$ and therefore to carry some rational coefficients in the calculation.
Hence we suppose that $R=\sum_{n\geq 1}r_nt^{n}\in\bK$.
Let~$d$ be the order of zero of $R$ at $0$, so that $d$ is the smallest integer
such that $r_d\neq 0$.
Fixing a compatible system of roots of $R$ in $\bF$, we can extend
$\phir$ to an automorphism of $\bF$: Indeed $R$ admits a compositional inverse in $\bF$ even if $d>1$.
Lemma~\ref{lemma:constants-Phi-R} implies that $\bF^{\phir}=\C$, since for any $f\in\bF$ such that
$\phir(f)=f$ there exists a variable change of the form $\tilde s=t^{1/p}$, so that both $f$ and $R$ are in $\C((\tilde s))$.
Notice that the chain rule $\phir(f)'=\phir(f')R'$ still holds for any $f\in\bF$.
\end{notation}

\paragraph{Existence of $\Psi$.}
As before, we are interested in the functional equation
$y(R(t))=R'(t)y(t)$.
We start by showing the existence of a solution $\Psi$, which is a well-known fact:

\begin{lemma}\label{lemma:iterativelogarithm}
Let $R\in\bK\cap t\C[[t]]$.
The functional equation $\phir(y)=\frac{R'(t)}{d}y$ always admits a solution in $\C[[t]]$,
which is uniquely determined up to a multiplicative constant.
\end{lemma}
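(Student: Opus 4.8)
The plan is to dispose of uniqueness first, since it is immediate and reduces the statement to the construction of a single nonzero solution. If $y_1,y_2\in\C[[t]]$ are nonzero solutions of $\phir(y)=\frac{R'(t)}{d}y$, their quotient is a well-defined element of the Puiseux field $\bF$ and satisfies $\phir(y_1/y_2)=\phir(y_1)/\phir(y_2)=\frac{(R'/d)y_1}{(R'/d)y_2}=y_1/y_2$. Since $\bF^{\phir}=\C$ (Lemma~\ref{lemma:constants-Phi-R}, applied over $\bF$ as in Notation~\ref{notation:argebraic-R}), we conclude $y_1/y_2\in\C$, so the nonzero solutions form a single $\C^{\times}$-orbit. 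Everything therefore comes down to exhibiting one nonzero power series solution.

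For existence I would substitute $y=\sum_{n\ge\nu}c_nt^n$ with $c_\nu\neq0$ and determine the coefficients recursively. Writing $R=\sum_{k\ge d}r_kt^k$ with $r_d\neq0$ and $\frac{R'}{d}=\sum_{j\ge d-1}\rho_jt^j$ with $\rho_{d-1}=r_d$, the composite $y(R)=\sum_n c_nR^n$ has order $\nu d$ while $\frac{R'}{d}y$ has order $(d-1)+\nu$. For $d\ge2$ these orders agree exactly when $\nu=1$, and then the two sides share the same lowest-order term $r_dc_1t^d$ for \emph{every} value of $c_1$; hence $\nu=1$ and $c_1$ is a free multiplicative parameter. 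The recursion is now transparent: comparing coefficients of $t^N$ for $N\ge d+1$, the unknown $c_{N-d+1}$ appears on the right-hand side with coefficient $\rho_{d-1}=r_d\neq0$, while on the left only the $c_n$ with $nd\le N$, that is $n\le\lfloor N/d\rfloor<N-d+1$, can occur (the strict inequality holds precisely because $N>d$). Thus each $c_{N-d+1}$, for $N-d+1=2,3,\dots$, is pinned down uniquely by the lower-order coefficients, and the series is completely and consistently determined. This settles the case $d\ge2$. Equivalently, one may take $\Psi=\tau/\tau'$ for the Böttcher coordinate $\tau\in t+t^2\C[[t]]$ of Theorem~\ref{thm:Becker-Bergweiler}, which solves $\phir(y)=\frac{R'}{d}y$ upon differentiating $\tau(R)=\tau^d$.

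The case $d=1$ is exactly the Julia equation $\phir(y)=R'(t)y$. Here the recursion has pivot $r_1^{\,N-1}-1$ (up to the factor $r_1=R'(0)$), so the same argument gives a solution of order $\nu=1$ whenever $r_1$ is not a root of unity. The genuinely delicate situation, which I expect to be the main obstacle, is $d=1$ with $r_1$ a root of unity: the pivots vanish at the resonant orders $N\equiv1$, and the minimal solution is forced to have order strictly larger than $1$ (for instance $R=-t+t^2$ has no order-$1$ solution but does admit one of order $3$). I would resolve this by passing to the iterate $R^{\circ k}$, for which $(R^{\circ k})'(0)=r_1^{k}=1$; there the parabolic analysis produces a genuine power series solution, of order $m=\mathrm{ord}_0(R^{\circ k}-t)$, with a nonvanishing pivot of the form $(N-m)\cdot(\text{leading coefficient of }R^{\circ k}-t)$. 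One then transfers this back to $R$ by noting that $y\mapsto\phir(y)/R'$ preserves the one-dimensional solution line of the $R^{\circ k}$-equation and acts on it by a $k$-th root of unity; reading that scalar off the leading term, and using that $m\equiv1\pmod k$ for such parabolic iterates, forces it to equal $1$, so the $R^{\circ k}$-solution solves the original equation as well.
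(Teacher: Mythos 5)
Your proof is correct, and it diverges from the paper's in two instructive ways. For $d\geq 2$ the paper invokes Böttcher's theorem ($\tau(R)=\tau^{d}$, then $\psi=\tau/\tau'$), whereas your recursion with pivot $r_{d}\neq 0$ is elementary and yields existence and uniqueness in one stroke; what the paper's route buys is the coordinate $\tau$ itself, which it reuses immediately after the lemma to form $\Psi=\frac{\tau}{\tau'}\log\tau\in\Falg$ when $d\geq 2$. For $d=1$ with $r_{1}$ not a root of unity your recursion is exactly the paper's. The substantive difference is the root-of-unity case: the paper disposes of it with ``replacing $R$ with a convenient iterate of $R$, we can assume that $r_{1}$ is equal to $1$'', but the justification it points back to (the proof of Proposition~\ref{prop:Psi}) only gives the implication ``solution for $R$ $\Rightarrow$ solution for $R^{\circ k}$'', while existence requires the converse, which is not automatic; your transfer argument supplies precisely this missing implication, and it is sound: $T(y):=\phir(y)/R'$ preserves solutions of the $R^{\circ k}$-equation because iterates of $R$ commute, so $(R\circ R^{\circ k})'=(R^{\circ k}\circ R)'$; that solution line is one-dimensional; $T$ acts on it by the scalar $r_{1}^{m-1}$ (compare leading coefficients); and $r_{1}^{m-1}=1$ because comparing the coefficients of $t^{m}$ in $R\circ R^{\circ k}=R^{\circ k}\circ R$ gives exactly $r_{1}a=r_{1}^{m}a$, i.e. $m\equiv 1\pmod{k}$. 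So your argument in fact repairs a compressed step in the paper's own proof. Two points to tighten: the uniqueness statement, and the one-dimensionality used in the transfer, genuinely require the ambient hypothesis that no iterate of $R$ is the identity (for $R=-t$ the solutions of $y(-t)=-y(t)$ are all odd series), so you should cite the standing assumptions of Notation~\ref{notation:argebraic-R} rather than the bare statement of the lemma; and the congruence $m\equiv 1\pmod{k}$, which you assert as known, should either be proved by the one-line commutation computation just indicated or attributed explicitly to the classical theory of parabolic fixed points.
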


\begin{proof}
If $d>1$, i.e., $r_1=\dots=r_{d-1}=0$,
it follows from Böttcher's Theorem (See \cite[\S II]{ritt_iteration_1920} or \cite[\S8.3A]{kuczma_iterative_1990})
that there exists $\tau(t)\in t+t^2C[[t]]$, such that
$\tau(R(t))=\tau(t)^d$. Taking the logarithmic derivative, we see that
$\psi:=\frac{\tau(t)}{\tau'(t)}$ verifies the functional equation
$y(R(t))=\frac{R'(t)}{d}y(t)$.
\par
We consider now the case $d=1$, i.e. $r_1\neq 0$.
It follows from the functional equation $y(R(t))=R'(t)y(t)$ that, if  solution
$\psi=\sum_{n\geq 0} \psi_nt^n\in\C[[t]]$ exists,
then:
    \[
    \sum_{n\geq 1}\l(\sum_{h=1}^n\psi_h
    \sum_{k_1+\dots+k_h=n}r_{k_1}\cdots r_{k_h}\r)  t^n+\psi_0
    =
    \sum_{n\geq 1}
    \l(\sum_{h=0}^n(n-h+1)r_{n-h+1}\psi_h\r)t^n+r_1\psi_0.
    \]
As we have already pointed out, if $r_1$ is a root of unity,
replacing $R$ with a convenient iterate of $R$, we can assume that $r_1$ is equal to $1$.
Therefore we have to consider the case: $r_1=1$ and $r_1$ not a root of unity.
If $r_1$ is not a root of unity, then $\psi_0=0$ and we can choose any constant for $\psi_1$.
Recursively we see that
$(r_1^n-r_1)\psi_n$ is a linear combination of $\psi_1,\dots,\psi_{n-1}$.
That means that we can determine the coefficients of $\psi$ one by one, and hence the series $\psi$.
\par
Let us suppose that $r_1=1$ and that $r_2\neq 0$.
Identifying the coefficients of each $t^n$ in the expressions above, we obtain:
    \[
    \begin{array}{ll}
         \psi_0=\psi_0, & \hbox{for $n=0$, which is a tautology;}\\
         \psi_1=\psi_1+2r_2\psi_0, & \hbox{for $n=1$, hence $\psi_0=0$;}\\
         \psi_2+r_2\psi_1=\psi_2+2r_2\psi_1+3r_3\psi_0, & \hbox{for $n=2$, hence $\psi_1=0$;}\\
         \psi_3+2r_1r_2\psi_2+r_3\psi_1=\psi_3+2r_2\psi_2+3r_3\psi_1+4r_3\psi_0,
            & \hbox{for $n=3$, hence $\psi_2$ can be any constant;}
    \end{array}
    \]
and we keep solving inductively the system.
If $r_2=\dots,r_m=0$, with $m>2$, the calculation is similar and it is developed in full details,
for instance, in \cite[\S8.5A]{kuczma_iterative_1990}.
\end{proof}

In what follows we call $\Falg$ the field obtained in the following way: If $r_1\neq 0$ then $\Falg=\bF$, while $\Falg:=\bF(\log t)$ if $r_1=0$.
Notice that $\bF(\log t)$ can be defined as $\C((t))(\log t)$ in Lemma~\ref{lemma:construction-F},
looking at the elements of $\bF$ convergent in a punctured disk as Laurent series in some $t^{1/p}$ as
meromorphic functions on the Riemann surface of the logarithm.
We have $\Falg^{\phir}=\C$, thanks to the same theorems cited in the proof of Lemma~\ref{lemma:construction-F}.
The notation $\Falg$ mirrors the fact that
we have constructed $\Falg$ in the same way as $\F$, apart that we have replaced $\C((t))$ with
its algebraic closure. The same remark applies to $\Kalg$. Notice that they are both obtaining
adjoining the solution $\Psi$, constructed in the next corollary, and all its derivatives to $\bF$ and $\bK$, respectively.

\begin{cor}
The functional equation $\phir(y)=R'y$ admits a solution
$\Psi\in\Falg$.
Moreover $\Psi$
satisfies the system of functional equations described in Lemma~\ref{lemma:derivations-psi} and
the derivation $\partial:=\Psi\frac{d}{dt}$ verifies $\phir\circ\partial=\partial\circ\phir$.
\end{cor}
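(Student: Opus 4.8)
The plan is to reduce everything to Lemma~\ref{lemma:iterativelogarithm} and to the purely formal computations already performed in Lemma~\ref{lemma:derivations-psi} and Lemma~\ref{lemma:CommutationPartialPhi}, now carried out over $\bF$ rather than $\C((t))$. First I would invoke Lemma~\ref{lemma:iterativelogarithm} to obtain a nonzero $\psi\in\C[[t]]\subset\bF$ with $\phir(\psi)=\frac{R'}{d}\psi$. When $r_1\neq 0$, i.e. $d=1$, we have $\Falg=\bF$ and $\psi$ already solves $\phir(y)=R'y$, so one may take $\Psi=\psi$. The interesting case is $r_1=0$, i.e. $d\geq 2$, where $\Falg=\bF(\log t)$.

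In that case I would use the Böttcher function $\tau\in t+t^2\C[[t]]$ of Lemma~\ref{lemma:iterativelogarithm}, for which $\phir(\tau)=\tau^d$ and, by the logarithmic-derivative computation establishing that lemma, $\psi=\frac{\tau}{\tau'}$. Writing $\tau=t(1+t\chi(t))$ with $\chi\in\C[[t]]$, the element $\log\tau=\log t+\log(1+t\chi)$ is a well-defined element of $\bF(\log t)=\Falg$, and applying $\phir$ to $\tau(R)=\tau^d$ gives $\phir(\log\tau)=\log(\tau^d)=d\log\tau$. Hence, setting $\Psi:=\psi\,\log\tau\in\Falg$, I would compute
\[
\phir(\Psi)=\phir(\psi)\,\phir(\log\tau)=\frac{R'}{d}\,\psi\cdot d\log\tau=R'\,\psi\log\tau=R'\Psi,
\]
so $\Psi$ is the desired solution. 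This mirrors the construction in the proof of Proposition~\ref{prop:Psi}.

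For the two remaining assertions I would observe that they follow formally and use no rationality. The chain rule $\phir(f)'=\phir(f')R'$ holds for every $f\in\bF$ (recalled in Notation~\ref{notation:argebraic-R}) and extends to $\bF(\log t)$ since $\phir$ and $\frac{d}{dt}$ act compatibly on $\log t$; therefore the inductive differentiation of $\phir(\Psi_0)=R'\Psi_0$ from the proof of Lemma~\ref{lemma:derivations-psi} goes through verbatim in $\Falg$, yielding the claimed system for $\Psi$ and its derivatives with the same coefficients $A_{n,k}$. Likewise, the commutation $\partial\circ\phir=\phir\circ\partial$ for $\partial=\Psi\frac{d}{dt}$ is exactly the one-line computation of Lemma~\ref{lemma:CommutationPartialPhi}: for $f\in\Falg$, $(\partial\circ\phir)(f)=\Psi\phir(f')R'=\phir(f')\phir(\Psi)=\phir(f'\Psi)=(\phir\circ\partial)(f)$, using only $\phir(\Psi)=R'\Psi$, the chain rule, and the multiplicativity of $\phir$.

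The only genuinely delicate point, and the step I would treat most carefully, is the passage to $\Falg=\bF(\log t)$: one must check that $\log\tau$ really defines an element on which $\phir$ acts with $\phir(\log\tau)=d\log\tau$, and that both the chain rule and the identity $\Falg^{\phir}=\C$ persist after adjoining $\log t$. These facts are already secured by the construction of $\bF(\log t)$ recalled after Lemma~\ref{lemma:iterativelogarithm}, paralleling Lemma~\ref{lemma:construction-F}, so no new analytic input is needed and everything else is formal.
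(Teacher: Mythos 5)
Your proposal is correct and follows essentially the same route as the paper: the paper's proof also sets $\Psi=\psi$ when $r_1\neq 0$ and $\Psi=\psi\log\tau=\frac{\tau}{\tau'}\log\tau\in\Falg$ when $r_1=0$, and then notes that Lemma~\ref{lemma:derivations-psi} and Lemma~\ref{lemma:CommutationPartialPhi} are formal consequences of the chain rule and hence apply verbatim over $\Falg$. Your additional explicit verification of $\phir(\Psi)=R'\Psi$ and of the meaning of $\log\tau$ in $\bF(\log t)$ only spells out details the paper leaves implicit.
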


\begin{proof}
If $r_1\neq 0$, i.e. $d=1$, then we set $\Psi=\psi$. Otherwise, in the notation of the proof of the lemma above,
we have $\Psi=\psi\log\tau=\frac{\tau}{\tau'}\log\tau\in\Falg$.
The proofs of Lemma~\ref{lemma:derivations-psi} and Lemma~\ref{lemma:CommutationPartialPhi} are
a formal consequence of the chain rule, therefore they apply also to the present situation.
\end{proof}

We make the following assumptions:
\begin{quote}\mbox{}
    \begin{minipage}{.1\textwidth}$(\cDT_{alg})$\end{minipage}
    \begin{minipage}{.7\textwidth}
        We fix a nonzero $R\in\bK\cap t\C[[t]]$, such that no iteration of $R$ is equal to the identity.
        We suppose that $\Psi\in \Falg$ is differentially transcendental over $\bK$
        with respect to $\frac{d}{dt}$.
    \end{minipage}
\end{quote}
Notice that $\bK/\C(t)$ is an algebraic extension, therefore it is a differentially algebraic extension.
It follows from Lemma~\ref{lemma:DiffAlg} that being differentially transcendental over $\bK$
is equivalent to being differentially transcendental over $\C(t)$.

For further reference we prove the following rationality result, which generalizes Proposition~\ref{prop:algebraicity}:

\begin{lemma}\label{lemma:rat-2}
\change{Under the assumptions $(\cDT_{alg})$, if a functional equation $\phir(y)=ay+b$, with $a,R,b\in\C(t)$ and $a,b\neq 0$, has an algebraic solution $f\in\bK$, but the
associated homogenous equation $\phir(y)=ay$ has no nonzero algebraic solutions,
then $f$ is actually a rational function.}
\end{lemma}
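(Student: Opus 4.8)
The plan is to adapt the argument of Proposition~\ref{prop:algebraicity}, the only new ingredient being that the role played there by the $\phir$-invariants is now played by the hypothesis on the homogeneous equation. It suffices to prove that the algebraic solution $f\in\bK$ is in fact rational. First I would let $P(t,X)=X^N+\sum_{i=0}^{N-1}p_i(t)X^i$ be the minimal monic polynomial of $f$ over $\C(t)$, with $p_i\in\C(t)$ and $N=[\C(t)(f):\C(t)]$. Since $R\in\C(t)$, the endomorphism $\phir$ maps $\C(t)$ into $\C(t)$ and $\bK$ into $\bK$, and it is injective because it extends to an automorphism of $\bF$ (Notation~\ref{notation:argebraic-R}).

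Next I would apply $\phir$ to the relation $P(t,f)=0$. Using $\phir(p_i)=p_i(R)\in\C(t)$ and $\phir(f)=af+b$, this yields $\sum_{i=0}^{N-1}\phir(p_i)(af+b)^i+(af+b)^N=0$. Dividing by $a^N$, which is licit since $a\neq 0$, produces a monic polynomial of degree $N$ in $X$ with coefficients in $\C(t)$ vanishing at $f$. As $f$ has degree $N$ over $\C(t)$, this polynomial must coincide with $P(t,X)$. Comparing the coefficients of $X^{N-1}$—whose only contributions come from $(af+b)^N$ (giving $Na^{N-1}b$) and from $\phir(p_{N-1})(af+b)^{N-1}$ (giving $a^{N-1}\phir(p_{N-1})$)—and dividing through by $a^N$, I expect to arrive at the relation $\phir(p_{N-1})=a\,p_{N-1}-Nb$.

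To conclude, I would set $g:=p_{N-1}+Nf$ and compute, using $\phir(f)=af+b$ and the relation just obtained,
\[
\phir(g)=\phir(p_{N-1})+N\phir(f)=\bigl(a\,p_{N-1}-Nb\bigr)+N(af+b)=a(p_{N-1}+Nf)=ag,
\]
so that $g$ is a solution of the homogeneous equation $\phir(y)=ay$. Moreover $g$ is algebraic over $\C(t)$, since $p_{N-1}\in\C(t)$ and $f\in\bK$. By hypothesis this homogeneous equation has no nonzero algebraic solution, whence $g=0$ and therefore $f=-p_{N-1}/N\in\C(t)$, as claimed.

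The computation is essentially routine; the two points requiring care are the verification that the transformed polynomial is again monic and minimal (which follows from the degree count together with monicity after division by $a^N$), and the extraction of the $X^{N-1}$-coefficient, where one must check that no other term contributes. I expect no genuine obstacle: in particular, the differential-transcendence content of $(\cDT_{alg})$ is not needed, the argument resting only on $\phir$ being defined and injective on $\bK$ and on the absence of nonzero algebraic solutions of $\phir(y)=ay$.
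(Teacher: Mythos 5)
Your proof is correct and takes essentially the same route as the paper: apply $\phir$ to the minimal monic polynomial of $f$, divide by $a^N$, invoke uniqueness of the minimal monic polynomial to compare coefficients of $X^{N-1}$ and deduce $\phir(p_{N-1})=a\,p_{N-1}-Nb$, then observe that $p_{N-1}+Nf$ is an algebraic solution of the homogeneous equation and hence vanishes. If anything, your sign bookkeeping is more careful than the paper's, which asserts that $N^{-1}p_{N-1}$ (rather than $-N^{-1}p_{N-1}$) satisfies the same equation as $f$; your conclusion $f=-p_{N-1}/N\in\C(t)$ is the correct form of the same argument, and your closing remark that $(\cDT_{alg})$ plays no role here is also accurate.
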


\begin{rmk}
Let $R\in\C(t)$. Then $\phir(y)=t$, has the inverse of $R$ as solution, which in general is not rational.
This shows that the assumption $a\neq 0$ is necessary.
\end{rmk}

\begin{proof}
We use once more a classical trick: 
Let $\sum_{i=0}^Np_i(t)f^i=0$, with $p_i(t)\in\C(t)$ and $p_N=1$, be the minimal monic polynomial for an algebraic solution $f$ of
$\phir(y)=ay+b$. Applying $\phir$ to $\sum_{i=0}^Np_i(t)f^i=0$ and dividing by $a^N$, we obtain
    \[
    0=(a)^{-N}\sum_{i=0}^N\phir(p_i)\left(af+b\right)^i=f^N+\left(\frac{Nb+\phir(p_{N-1})}{a}\right)f^{N-1}+....,
    \]
which is also a minimal monic polynomial for $f$.
It follows that
    \[
    \phir(p_{N-1})=ap_{N-1}-Nb,
    \]
hence $N^{-1}p_{N-1}\in\C(t)$ and $f$ are solution of the same functional equation.
If $f\neq N^{-1}p_{N-1}$, then $f-N^{-1}p_{N-1}$ is an algebraic solutions of $\phir(y)=ay$, contradicting the assumptions.
Therefore $f=N^{-1}p_{N-1}$, i.e., $f$ is a rational function.
\end{proof}



\subsection{The particular cases \texorpdfstring{$a=1$}{a=1} and \texorpdfstring{$b=0$}{b=0} }
\label{subsec:a=1-b=0-algebraic}

We are going to start from the following partial result:

\begin{prop}\label{prop:main-additive-ALG}
Under the assumptions $(\cDT_{alg})$, we suppose that there exists $b\in\bK$ and $f\in\bF$ such that $f(R)=f+b$.
Then $f$ is either differentially transcendental over $\C(t)$ or the derivative $f'$ of $f$ is an algebraic function,
i.e., $f'\in\bK$.
\end{prop}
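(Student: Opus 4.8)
The plan is to transpose the first step of the proof of Theorem~\ref{thmINTRO:main-additive} to the algebraic setting; this suffices because the present statement only asks for the analogue of that step (the conclusion $f'\in\bK$), so the residue computation forming its second step is not needed here. First I would dispose of the case $b=0$: then $\phir(f)=f$, whence $f\in\bF^{\phir}=\C$ by Lemma~\ref{lemma:constants-Phi-R} (extended to $\bF$ in Notation~\ref{notation:argebraic-R}) and $f'=0\in\bK$. So I assume $b\neq 0$ and that $f$ is differentially algebraic over $\C(t)$, aiming to show $f'\in\bK$. By the remark following $(\cDT_{alg})$, differential algebraicity over $\C(t)$ and over $\bK$ coincide; and since $\Psi\in\Kalg$, the ordinary derivatives $f^{(k)}$ and the iterated $\partial$-derivatives $\partial^j f$ generate the same field over $\Kalg$, so $f$ is also differentially algebraic over $\Kalg$ with respect to $\partial=\Psi\frac{d}{dt}$.

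Next I would apply Proposition~\ref{prop:DiffAlgHanoi} to the field $\Falg$ equipped with $\phir$ and $\partial$, which commute, and to the $\phir$- and $\partial$-stable subfield $\Kalg$ containing $\C=\Falg^{\phir}$ and $b$. As $f$ is differentially algebraic over $\Kalg$, the fourth assertion of that proposition holds, hence so does its second: there exist $n\geq 0$, constants $\lambda_0,\dots,\lambda_n\in\C$ not all zero, and $g\in\Kalg$ with $\sum_{i=0}^n\lambda_i\partial^i(f)=g$, and I may take $\lambda_n\neq 0$. If $n=0$ then $f=g/\lambda_0\in\Kalg$; since $f$ lies in the differentially algebraic extension $L$ of $\bK$ it generates and $\Kalg/\bK$ is purely differentially transcendental, Lemma~\ref{lemma:DiffAlg} gives $f\in L\cap\Kalg=\bK$, so $f'\in\bK$.

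For $n\geq 1$, let $L\subset\bF$ be the differentially algebraic extension of $\bK$ generated by $f$ and its derivatives; by Lemma~\ref{lemma:DiffAlg} the series $\Psi$ is differentially transcendental over $L$, so $\Psi_0,\Psi_1,\dots$ are algebraically independent over $L$. The proof of Lemma~\ref{lemma:derivatives} uses only the Leibniz rule and this algebraic independence, so it applies verbatim over $L$, giving $\partial^i f\in f'\Psi_0^{i-1}\Psi_{i-1}+L[\Psi_0,\dots,\Psi_{i-2}]$ for $i\geq 2$ (with $\partial f=f'\Psi_0$ as the base case). Consequently $g\in\lambda_n f'\Psi_0^{n-1}\Psi_{n-1}+L[\Psi_0,\dots,\Psi_{n-2}]$. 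Because the $\Psi_i$ are algebraically independent over both $\bK$ and $L$, the formal operator $\frac{1}{(n-1)!}\bigl(\frac{\partial}{\partial\Psi_0}\bigr)^{n-1}\frac{\partial}{\partial\Psi_{n-1}}$ extracts the coefficient of $\Psi_0^{n-1}\Psi_{n-1}$, is compatible with the two expansions, and maps $\Kalg$ into $\Kalg$; applied to $g$ it yields $\lambda_n f'\in\Kalg$, hence $f'\in\Kalg$. Finally $f'$ is differentially algebraic over $\bK$, so the same intersection argument with Lemma~\ref{lemma:DiffAlg} and the pure differential transcendence of $\Kalg/\bK$ gives $f'\in\bK$, as claimed.

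The main difficulty I anticipate is not conceptual but lies in verifying that the abstract hypotheses of Proposition~\ref{prop:DiffAlgHanoi} really hold in this enlarged setting: that $\Kalg$ is stable under $\phir$ (which follows from $\phir(\bK)\subseteq\bK$, since $R\in\bK$ and composition preserves algebraicity, together with the system in Lemma~\ref{lemma:derivations-psi}) and under $\partial$, and that $\bF^{\phir}=\Falg^{\phir}=\C$. Once this framework is secured, the only step requiring genuine care is the coefficient extraction, which rests on the algebraic independence of the $\Psi_i$ over $L$; everything else transfers unchanged from the rational case.
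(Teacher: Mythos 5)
Your proposal is correct and follows essentially the same route as the paper's own proof: dispose of $b=0$, transfer differential algebraicity from $\C(t)$ to $\Kalg$ with respect to $\partial=\Psi\frac{d}{dt}$, apply Proposition~\ref{prop:DiffAlgHanoi} to the extension $\Falg/\Kalg$, treat $n=0$ via Lemma~\ref{lemma:DiffAlg}, and for $n\geq 1$ use Lemma~\ref{lemma:derivatives} together with the algebraic independence of the $\Psi_i$ over $L$ to extract $\lambda_n f'\in\Kalg$, concluding $f'\in\bK$. Your explicit remark that Lemma~\ref{lemma:derivatives} transfers verbatim to the base field $\bK$ (since its proof only uses the Leibniz rule and algebraic independence) is a point the paper passes over silently, but it is the same argument.
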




\begin{proof}
If $b=0$, then $f$ is a constant and there is nothing to prove. Therefore we suppose that $b\neq 0$ and
hence that $f$ is nonconstant, i.e., $f'\neq 0$.
Notice that if $f$ is differentially algebraic over $\bK$, \
then $f$ is differentially algebraic over $\Kalg$, with respect to $\partial$.
Moreover $\phir(f')R'=f'+b'$.
\par
It follows from Proposition~\ref{prop:DiffAlgHanoi} (applied to the field extension $\Falg/\Kalg$)
that there exist a non-negative integer $n$, $\lambda_{0},\ldots\lambda_{n}\in\C$, not all zero,
and $g\in\Kalg$ such that:
    \begin{equation}\label{eq:g_and_z}
    \sum_{i=0}^n\lambda_i\partial^i(f)=g.
    \end{equation}
We can suppose without loss of generality that $\lambda_n\neq 0$.
If $n=0$, we have $f=\frac{g}{\lambda_0}$,
therefore $f\in\Kalg$. By assumption, $\Kalg/\bK$ is a purely differentially transcendental extension
and $f$ is differentially algebraic over $\bK$, hence we conclude that $f\in L\cap\Kalg=\bK$ (see Lemma~\ref{lemma:DiffAlg}) .
This proves the proposition for $n=0$, hence we suppose from now on that $n\geq 1$.


Let $L$ be the differentially algebraic extension of $\bK$ generated by $f$ and its derivatives.
By Lemma~\ref{lemma:DiffAlg}, $\Psi$ is differentially transcendental over $L$.
Lemma~\ref{lemma:derivatives} implies that
\[g\in\Kalg\cap\left(\lambda_n f'\Psi_0^{n-1}\Psi_{n-1}+L[\Psi_0,\dots,\Psi_{n-2}]\right).\]
Since the $\Psi_i$'s are algebraically independent both on $\bK$ and $L$,
we can consider the partial derivatives $\frac{\partial}{\partial\Psi_0}$
and $\frac{\partial}{\partial\Psi_{n-1}}$ defined over $\Kalg$ and apply them to $g$.
We obtain:
    \[
    \lambda_n f'=\frac{1}{(n-1)!}\left(\frac{\partial}{\partial\Psi_0}\right)^{n-1}
    \left(\frac{\partial}{\partial\Psi_{n-1}}\right)(g)\in\Kalg.
    \]
Since $\lambda_n\neq 0$, we have proved that $f'$ is both differentially algebraic over $\bK$ and
belongs to $\Kalg$. We conclude thanks to Lemma~\ref{lemma:DiffAlg} that $f'\in\bK$.
\end{proof}


Now we consider the case $b=0$.
As we have already pointed out in the introduction, we need consider solutions of the functional equation
$\phir(y)=ay$ in a general $\Kalg$-algebra.
The reason for showing such a general statement is that Theorem~\ref{thmINTRO:main-0} provides a solution $f$ for $\phir(y)=ay+b$ by assumption,
but one has to construct some abstract auxiliary solutions of the associated homogeneous equation and recover the whole space of solution, to be able to study the nature of $f$ itself.

Thanks to \cite[Theorem, page 164]{wibmer_existence_2012} (see also \cite[Proposition~23]{bostan_differential_2020}), we know that for a given
functional equation $\phir(y)=ay$ there exists
a $\Kalg$-algebra $F$ such that:
\begin{quote}\mbox{}
\begin{minipage}{.1\textwidth}$(\cF)$\end{minipage}
\begin{minipage}{.7\textwidth}
\begin{enumerate}[itemsep=0em]
    \item $F$ comes equipped with an extension
        of $\phir$, such that $F^{\phir}=\C$, and an extension of $\frac{d}{dt}$, respecting the
        commutation
        $\frac{d}{dt}\circ\phir=R'(t)\phir\circ\frac{d}{dt}$;
    \item there exists a nonzero solution $z\in F$ of $\phir(y)=ay$ such that $F$ is generated over $\Kalg$ by $z,z^{-1}$ and all the derivatives of $z$;
    \item $F$ has no nilpotent elements and any element which is not invertible is a zero divisor.
\end{enumerate}
\end{minipage}
\end{quote}

In this case we have:

\begin{prop}\label{prop:maint-multiplicative-ALG}
Under the assumptions $(\cDT_{alg})$, let $a\in\bK$ and let $F$ be a $\Kalg$-algebra satisfying $(\cF)$.
Then either $z$ is differentially transcendental over $\C(t)$, with respect to $\frac{d}{dt}$, or
$\frac{z'}{z}$ is algebraic over $\C(t)$, i.e., $\frac{z'}{z}\in\bK$.
\end{prop}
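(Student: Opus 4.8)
The plan is to transpose Step~1 of the proof of Theorem~\ref{thmINTRO:main-multiplicative} from the field $\C((t))$ to the reduced $\Kalg$-algebra $F$, replacing the base field $\C(t)$ by $\bK$ and $\F,\K$ by $\Falg,\Kalg$. If $z$ is differentially transcendental over $\C(t)$ there is nothing to prove, so I assume $z$ is differentially algebraic over $\C(t)$, equivalently over $\bK$ (as $\bK/\C(t)$ is algebraic), and aim to show $\frac{z'}{z}\in\bK$. Note that $z$ is invertible in $F$ by $(\cF)$, so that $\frac{z'}{z}$ and $\frac{\partial z}{z}=\Psi\frac{z'}{z}$ are well-defined elements of $F$.

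First I would derive the functional equation governing the logarithmic derivative. Differentiating $\phir(z)=az$ and using $\phir(z)'=\phir(z')R'$ gives $\phir(z')R'=a'z+az'$; dividing by $\phir(z)=az$ yields $\phir\!\left(\frac{z'}{z}\right)R'=\frac{z'}{z}+\frac{a'}{a}$, that is $\phir\!\left(\frac{\partial z}{z}\right)=\frac{\partial z}{z}+\frac{\partial a}{a}$ with $\frac{\partial a}{a}=\Psi\frac{a'}{a}\in\Kalg$ (here $a\in\bK$ and $\Psi\in\Kalg$). If $a$ is a constant, then $\frac{\partial a}{a}=0$, so $\frac{\partial z}{z}\in F^{\phir}=\C$; writing $\frac{\partial z}{z}=c$ gives $\frac{z'}{z}=c\Psi^{-1}$, and since $\frac{z'}{z}$ is differentially algebraic over $\bK$ while $\Psi$ is not, this forces $c=0$ and $\frac{z'}{z}=0\in\bK$. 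Hence I may assume $a$ is nonconstant, so that $\frac{\partial a}{a}\neq 0$.

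Now $\frac{\partial z}{z}$ solves the inhomogeneous equation $\phir(Y)=Y+B$ with $B=\frac{\partial a}{a}\in\Kalg\setminus\{0\}$ and is differentially algebraic over $\Kalg$ with respect to $\partial$. Applying the Hardouin--Singer criterion of Proposition~\ref{prop:DiffAlgHanoi}, in the form valid for the reduced $\phir$-$\partial$-algebra $F$ satisfying $(\cF)$, produces $n\geq 0$, constants $\lambda_0,\dots,\lambda_n\in\C$ with $\lambda_n\neq 0$, and $g\in\Kalg$ such that $\sum_{i=0}^n\lambda_i\partial^i\!\left(\frac{\partial z}{z}\right)=g$. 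For $n=0$ this reads $\frac{z'}{z}=\frac{g}{\lambda_0\Psi_0}\in\Kalg$. For $n\geq 1$, the same induction as in Theorem~\ref{thmINTRO:main-multiplicative}, based on $\partial\!\left(\frac{z'}{z}\Psi_0^{m}\Psi_m\right)=\frac{z'}{z}\Psi_0^{m+1}\Psi_{m+1}+m\frac{z'}{z}\Psi_0^{m}\Psi_1\Psi_m+\left(\frac{z'}{z}\right)'\Psi_0^{m+1}\Psi_m$, shows $\partial^n\!\left(\frac{\partial z}{z}\right)\in\frac{z'}{z}\Psi_0^n\Psi_n+L[\Psi_0,\dots,\Psi_{n-1}]$, where $L$ is the differential subring of $F$ generated over $\bK$ by $z$ and its derivatives. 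Extracting the coefficient of $\Psi_0^n\Psi_n$ via the operator $\frac{1}{n!}\left(\frac{\partial}{\partial\Psi_0}\right)^{n}\frac{\partial}{\partial\Psi_n}$ then gives $\lambda_n\frac{z'}{z}\in\Kalg$, hence $\frac{z'}{z}\in\Kalg$. In both cases $\frac{z'}{z}\in\Kalg$ is differentially algebraic over $\bK$; since $\Kalg/\bK$ is purely differentially transcendental (because $\Psi$ satisfies $(\cDT_{alg})$), the definition of a purely differentially transcendental extension, together with Lemma~\ref{lemma:DiffAlg}, forces $\frac{z'}{z}\in\bK$, which is the claim.

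The step I expect to be the main obstacle is the coefficient extraction when $n\geq 1$: it requires $\Psi_0,\dots,\Psi_n$ to be algebraically independent over the subring $L$ generated by $z$, so that $\frac{1}{n!}(\partial/\partial\Psi_0)^n(\partial/\partial\Psi_n)$ is well defined and returns the coefficient of $\Psi_0^n\Psi_n$. In the field setting of Proposition~\ref{prop:main-additive-ALG} the solution lives in $\bF$ and this is immediate from Lemma~\ref{lemma:DiffAlg}, but here $z$ lies in the algebra $F$, which need not be a domain, so $L$ need not be a field; this is precisely the price of needing abstract solutions of the homogeneous equation. The plan is to recover the required independence from the transcendence-degree argument underlying Lemma~\ref{lemma:DiffAlg}, which uses only that $z$ is differentially algebraic over $\bK$ and $\Psi$ differentially transcendental over $\bK$, applied after decomposing the reduced, $\phir$-simple algebra $F$ into its finite product of integral domains cyclically permuted by $\phir$. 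Checking that Proposition~\ref{prop:DiffAlgHanoi} applies to $F$ rather than to a field is a related point, but it is exactly the situation for which $F$ was built to satisfy $(\cF)$ and for which the underlying Hardouin--Singer result is designed.
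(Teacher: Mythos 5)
Your proposal follows essentially the same route as the paper's proof: take the logarithmic derivative, invoke a Hardouin--Singer-type criterion to obtain $\sum_{i}\lambda_i\partial^i\!\left(\frac{\partial z}{z}\right)=g\in\Kalg$, split into the cases $n=0$ and $n\geq 1$ with the same recursion and extraction of the coefficient of $\Psi_0^n\Psi_n$, and finish with Lemma~\ref{lemma:DiffAlg} to descend from $\Kalg$ to $\bK$. The obstacle you single out --- that Proposition~\ref{prop:DiffAlgHanoi} is stated for fields while $z$ lives in the algebra $F$, which need not be a domain --- is exactly the point the paper must also address; it does so not by your proposed decomposition of $F$ into domains permuted by $\phir$ (which is a viable but unwritten argument), but simply by citing the ring-adapted version of the criterion, \cite[Proposition~28]{bostan_differential_2020}.
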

We need to prove a generalization of Lemma~\ref{lemma:mainthm-multiplicative}, which has a very similar proof to the lemma below: 

\begin{lemma}\label{lemma:main-multiplicative-alg}
Under the assumptions of Proposition~\ref{prop:maint-multiplicative-ALG},
let $a\in\C$, $a\neq 0$, and $F$ be a $\Kalg$-algebra satisfying the assumption $(\cF)$.
Then either $a=1$ and $z\in\C$ or
$z$ is differentially transcendental over $\C(t)$ and there exists a nonzero $c\in\C$ such that
$\frac{z'}{z}=\frac{c}{\Psi}$.
\end{lemma}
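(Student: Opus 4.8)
The plan is to transcribe the proof of Lemma~\ref{lemma:mainthm-multiplicative} into the present setting, using the axioms $(\cF)$ in place of the field structure of $\F$. If $a=1$, then $z$ solves $\phir(y)=y$, so $z\in F^{\phir}=\C$ by $(\cF)$ and we are in the first alternative; assume henceforth $a\neq 1$. Differentiating $\phir(z)=az$ and using the commutation $\frac{d}{dt}\circ\phir=R'\,\phir\circ\frac{d}{dt}$ from $(\cF)$ gives $R'\,\phir(z')=az'$, and since $\phir(\Psi)=R'\Psi$ this yields
\[
\phir(\Psi z')=\phir(\Psi)\,\phir(z')=\Psi\cdot\bigl(R'\phir(z')\bigr)=a\,\Psi z'.
\]
Thus $\Psi z'$ is again a solution of $\phir(y)=ay$. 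As $z$ is invertible by $(\cF)$, the quotient $\frac{\Psi z'}{z}$ is $\phir$-invariant, hence lies in $F^{\phir}=\C$; so there is $c\in\C$ with $\Psi z'=cz$, and dividing by the invertible $\Psi\in\Kalg$ gives $\frac{z'}{z}=c\,\Psi^{-1}\in\Kalg$.

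Next I would run the dichotomy exactly as in Lemma~\ref{lemma:mainthm-multiplicative}. Suppose $z$ is differentially algebraic over $\C(t)$; then so is $\frac{z'}{z}=c\Psi^{-1}$. Since $\Kalg/\bK$ is purely differentially transcendental, $\bK/\C(t)$ is algebraic, and $\Psi$ is differentially transcendental over $\bK$ by $(\cDT_{alg})$, Lemma~\ref{lemma:DiffAlg} forces $c\Psi^{-1}\in\bK$; because $\Psi\notin\bK$, this is possible only for $c=0$. Hence $z'=0$, so $z$ is a $\frac{d}{dt}$-constant, which (as explained below) forces $z\in\C$ and then $\phir(z)=z=az$, contradicting $a\neq1$. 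Therefore $z$ is differentially transcendental over $\C(t)$; in particular $z'\neq0$, so $c\neq0$ and $\frac{z'}{z}=\frac{c}{\Psi}$, as claimed.

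The only points requiring genuine care — the main obstacle — are that $F$ is merely a ring, so the two field-theoretic steps used above must be justified. For the implication ``$z$ differentially algebraic $\Rightarrow$ $\frac{z'}{z}$ differentially algebraic'', I would use that $F$ has no nilpotents by $(\cF)$: choosing a minimal differential prime $\mathfrak p$ yields a differential domain $F/\mathfrak p$ into which the field $\Kalg$ embeds (as $\mathfrak p\cap\Kalg=(0)$), and in the differential field $\mathrm{Frac}(F/\mathfrak p)$ the invertibility of the image of $z$ gives the implication by the finite-transcendence-degree argument of Lemma~\ref{lemma:DiffAlg}. For the step ``$z'=0\Rightarrow z\in\C$'', note that $z$ being a $\frac{d}{dt}$-constant makes $F=\Kalg[z,z^{-1}]$; if $z$ is algebraic over $\Kalg$, the minimal-polynomial trick together with $\Kalg^{\frac{d}{dt}}=\C$ gives $z\in\C$, while the transcendental case is ruled out by the structural properties of the constructed $F$ (when $a$ is a root of unity one even gets $z^m\in F^{\phir}=\C$, forcing $z\in\C$ and $a=1$ at once). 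It is precisely this last verification — ruling out $z'=0$, i.e. establishing $c\neq0$ — where the hypotheses of $(\cF)$ are indispensable, and I expect it to be the delicate part of the argument.
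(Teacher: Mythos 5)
You follow the paper's own skeleton: the case $a=1$ via $F^{\phir}=\C$, the relation $\Psi z'=cz$ obtained because $\Psi z'/z$ is $\phir$-invariant, and the dichotomy via the differential transcendence of $\Psi$. Your first patch --- passing to $F/\mathfrak{p}$ for a minimal (differential) prime $\mathfrak{p}$, so that the implication ``$z$ differentially algebraic $\Rightarrow$ $z'/z$ differentially algebraic'' can be run in the field $\mathrm{Frac}(F/\mathfrak{p})$ --- is correct, and is in fact more careful than the paper, whose five-line proof tacitly argues as if $F$ were a field. The genuine gap is exactly in the step you yourself single out as delicate: the exclusion of $c=0$. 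Both mechanisms you offer there fail. The minimal-polynomial trick, applied to the minimal monic $P=X^N+p_{N-1}X^{N-1}+\dots+p_0\in\Kalg[X]$ killing $z$ (well defined even though $F$ is not a domain, since the polynomials vanishing at $z$ form an ideal of the principal ideal domain $\Kalg[X]$), yields when $z'=0$ only that every $p_i$ lies in $\Kalg^{d/dt}=\C$ and that $a^N=1$ (from $\phir(p_0)=a^Np_0$ with $p_0\in\C^*$); it does not yield $z\in\C$. Likewise ``$z^m\in F^{\phir}=\C$ forces $z\in\C$'' is false: $F$ has zero divisors, so $z^m=\gamma\in\C^*$ does not make $z$ a scalar.

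Indeed this step cannot be repaired as stated. Take $a=-1$ and $F=\Kalg[X]/(X^2-1)$, with $\phir(X)=-X$ and $X'=0$ (the only possible extension of $d/dt$, since $2XX'=0$ forces $X'=0$). Then $F\cong\Kalg\times\Kalg$ is reduced and every non-unit is a zero divisor; $F$ is generated over $\Kalg$ by $z:=X=z^{-1}$ and its (vanishing) derivatives; and $F^{\phir}=\C$, because $\phir(f+gX)=\phir(f)-\phir(g)X$, so invariance forces $\phir(f)=f$ and $\phir(g)=-g$, whence $g^2\in\Kalg^{\phir}=\C$, so $g\in\C$ and then $g=0$, while $f\in\Kalg^{\phir}=\C$. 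So all of $(\cF)$ holds, yet $z$ is differentially algebraic ($z'=0$), $z\notin\C$, and $a\neq 1$: precisely the forcing you assert, and with it the dichotomy of the lemma, fails for this $F$. The obstruction is that $(\cF)$ says nothing about the $d/dt$-constants of $F$. The paper's own proof elides the same point (``$z$ is a constant, in contradiction with the fact that $a\neq 1$''), so you have located a real soft spot; but your resolution does not work, and none can unless one either assumes $a$ is not a root of unity (then $a^N=1$ is the desired contradiction, and your algebraic/transcendental case split closes the proof), or adds a hypothesis such as $F^{d/dt}=\C$, or weakens the conclusion to the dichotomy actually needed in Proposition~\ref{prop:maint-multiplicative-ALG}, namely $\frac{z'}{z}=\frac{c}{\Psi}$ with $c\in\C$ possibly zero.
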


\begin{proof}
For $a=1$ the statement is trivial, so let us suppose that $a\neq 1$.
Since $\phir(z)=az$, we also have $\phir(\Psi z')=a(\Psi z')$.
As
$F^{\phir}=\C$, there exists $c\in\C$ such that
$\Psi z'=cz$.
By contradiction, if $z$ is differentially algebraic over $\C(t)$,
then $\frac{z'}{z}=c\Psi^{-1}\in\K$ is also differentially algebraic over $\C(t)$.
By assumption, $\Psi$ is differentially transcendental over $\C(t)$, hence we deduce that $c=0$
and that $z$ is a constant,
in contradiction with the fact that $a\neq 1$.
We conclude that $z$ is differentially transcendental over $\C(t)$.
\end{proof}


\begin{proof}[Proof of Proposition~\ref{prop:maint-multiplicative-ALG}.]
If $a=1$ (resp. $a=0$), then $z\in\C\subset\C(t)$ (resp. $z=0\in\C(t)$).
Moreover, if $a\in\C\setminus\{0,1\}$ then $z$ is differentially transcendental
over $\C(t)$, by Lemma~\ref{lemma:main-multiplicative-alg}.
Therefore let us suppose that $a$ is not a constant and that $z$ is differentially algebraic over $\bK$
(or equivalently over $\C(t)$, since $\bK$ is the
algebraic closure of $\C(t)$)
and prove that $\frac{z'}{a}\in\bK$.
By taking the logarithmic derivative of $\phir(z)=az$ we find that
    \[
    \phir\l(\frac{\partial z}{z}\r)=\frac{\partial z}{z}+\frac{\partial a}{a},
    \hbox{~or equivalently that~}
    \phir\l(\frac{z'}{z}\r)R(t)'=\frac{z'}{z}+\frac{a'}{a}.
    \]
We point out that $\frac{a'}{a}\neq 0$, hence $\frac{z'}{z}\neq 0$.
The proof follows the structure of the proof of Theorem~\ref{thmINTRO:main-additive}, apart from the fact we cannot apply Proposition~\ref{prop:DiffAlgHanoi} since $F$ is not a field.
We will apply \cite[Proposition~28]{bostan_differential_2020} instead, which is a similar statement.

%
It follows from \cite[Proposition~28]{bostan_differential_2020}, 
that there exist an integer $n\geq 0$, $\lambda_0,\dots,\lambda_n\in\C$, not all zero, and $g\in\Kalg$ such that:
    \begin{equation}\label{eq:identity-z'/z-alg}
    \lambda_0\frac{\partial z}{z}+\dots
    +\lambda_n\partial^n\l(\frac{\partial z}{z}\r)=g\in\Kalg\,.
    \end{equation}
We can suppose that $\lambda_n\neq 0$.
If $n=0$ then $\frac{z'}{z}=\frac{g}{\lambda_0\Psi_0}$, therefore
$\frac{g}{\lambda_0\Psi_0}\in\Kalg$ is differentially algebraic over $\C(t)$.
Since $\Kalg/\bK$ is a purely differentially transcendental extension, we conclude that
$\frac{g}{\lambda_0\Psi_0}\in\bK$, and hence $\frac{z'}{z}\in\bK$.
\par
Let us consider the case $n\geq 1$. Let $L$ be the differentially algebraic extension of $\bK$ generated by
$z$ and all its derivatives.
Then $\frac{\partial z}{z}=\Psi_0\frac{z'}{z}\in\Psi_0\,L$.
One proves recursively that $\partial^n\l(\frac{\partial z}{z}\r)\in \frac{z'}{z}\Psi_0^n\Psi_n+L[\Psi_0,\dots,\Psi_{n-1}]$,
since on one hand we have:
    \[
    \partial\l(\frac{\partial z}{z}\r)=\Psi_0\l(\Psi_0\frac{z'}{z}\r)'=\frac{z'}{z}\Psi_0\Psi_1+\l(\frac{z'}{z}\r)'\Psi_0^2,
    \]
and on the other hand, as in Lemma~\ref{lemma:derivatives}, we have:
    \[
    \partial\l(\frac{z'}{z}\Psi_0^n\Psi_n\r)=\frac{z'}{z}\Psi_0^{n+1}\Psi_{n+1}+\frac{z'}{z}n\Psi_0^n\Psi_1\Psi_n
    +\l(\frac{z'}{z}\r)'\Psi_0^{n+1}\Psi_n\,.
    \]
Since $g\in\Kalg\cap(\frac{z'}{z}\Psi_0^n\Psi_n+L[\Psi_0,\dots,\Psi_{n-1}])$,
we conclude that $\frac{z'}{z}\in\bK$, applying
$\frac{1}{n!}\frac{\partial^n}{\partial\Psi_0^n}\frac{\partial}{\partial\Psi_n}$,
reasoning as we have done several time before.
\end{proof}


%
%

\subsection{A key result of parameterized Galois theory}
\label{subsec:GaloisTheory}

In this section, using the theory developed in \cite{hardouin_differential_2008}, to prove the main ingredient of Theorem~\ref{thmINTRO:main-0}.
This is the only section of the paper where we actively use some Galois theory.
For a short survey on of parameterized Galois theory the reader can also
see \cite{di_vizio_approche_2012}.

\par
Let $(C,\partial)$ be a differential closure of $\C$ (the latter being equipped with the trivial derivation).
This implies that (see, for instance \cite[Definition~1.7 and Theorem~2.2]{bouscaren_differentially_1998}):
\begin{enumerate}
    \item any differential equation with coefficients in $C$ that
    has a solution in a $C$-algebra, already has a solution in $C$;
    \item the field $C$ is a differentially algebraic extension of $\C$;
    \item the subfield of constants of $C$ is $\C$.
\end{enumerate}
For the reader which is not familiar with the notion of differential closure, the list of property above should be enough to
reason by analogy with the notion of algebraic closure, although the differential closure is a much more subtle objet and the analogy should be handled be some precautions.
The necessity of introducing such a huge and  mysterious field will become clear in a few lines.
\par
We want to extend the scalars from $\C$ to $C$, which we can do thanks to the following lemma:

\begin{lemma}
Let $L$ be one of the fields $\C(t)$, $\bK$, $\Kalg$, $\bF$ or $\Falg$.
Then $L\otimes_\C C$ is a domain.
\end{lemma}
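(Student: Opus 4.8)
The plan is to reduce the statement to a standard fact about \emph{regular} field extensions, taking advantage of the fact that the base field $\C$ is algebraically closed of characteristic zero. Recall that a field extension $L/k$ is called regular if it is separable and $k$ is algebraically closed in $L$, and that the relevant black-box theorem from field theory is: $L/k$ is regular if and only if $L\otimes_k M$ is an integral domain for \emph{every} field extension $M/k$ (equivalently, $L$ is linearly disjoint from $\overline{k}$ over $k$). Since the conclusion of the lemma only concerns the ring structure of a tensor product of $\C$-algebras, the differential structure on $C$ plays no role here; I would simply view $C$ as a plain field extension of $\C$.

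First I would observe that each of $\C(t)$, $\bK$, $\Kalg$, $\bF$, $\Falg$ is, by its construction, a field containing $\C$, and that $C$, being a differential closure of $\C$, is in particular a field extension of $\C$. Next I would verify that \emph{any} field extension $L/\C$ is regular: because $\C$ has characteristic zero, $L/\C$ is automatically separable; and because $\C$ is algebraically closed, every element of $L$ that is algebraic over $\C$ already lies in $\C$, so $\C$ is algebraically closed in $L$. Hence $L/\C$ is regular for all five choices of $L$.

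Applying the cited theorem with $k=\C$, with $L$ one of the five listed fields, and with $M=C$, I conclude that $L\otimes_\C C$ is a domain. I do not expect any genuine obstacle: the only points to check are that the listed objects are truly fields over $\C$ (immediate from their definitions) and that one may invoke the regular-extension theorem, which is standard. Should one prefer a self-contained justification rather than quoting the theorem, one can note that $\overline{\C}=\C$ forces $L$ to be trivially linearly disjoint from $\overline{\C}$ over $\C$, which is precisely the characterization of regularity yielding the same conclusion.
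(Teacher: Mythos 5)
Your proof is correct and is essentially the paper's approach: the paper's entire proof is a citation of the same standard fact (Stacks Project Tag 0FWF, or G\"ortz--Wedhorn, Prop.~5.51, in its geometric formulation), namely that tensoring field extensions over an algebraically closed field preserves being a domain. Your detour through regular extensions and linear disjointness is just the field-theoretic name for that black box, and your verification of its hypotheses (characteristic zero, $\C$ algebraically closed in $L$, all five $L$ are fields over $\C$) is correct.
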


\begin{proof}
See   \cite[\href{https://stacks.math.columbia.edu/tag/0FWF}{Tag 0FWF}]{the_stacks_project_authors_stacks_2023} or
the equivalent geometric formulation \cite[Proposition~5.51]{gortz_algebraic_2010}.
\end{proof}

For any choice of $L$ we can consider the field of fractions of $L\otimes_\C C$.
Extending intuitively the Notation~\ref{notation:argebraic-R}, we obtain the following fields:
    \[
    C\subset C(t) \subset \bK_C \subset \Kalg_{C} \subset \bF_C \subset \Falg_{C}.
    \]
For instance, $\Falg_{C}$ is the field of fraction of $\Falg\otimes_\C C$.
The map $\phir$ acts naturally on all such fields and the fields of $\phir$-invariant elements is $C$ for each one of them.
\change{Notice that $\bK_C$ is contained in the algebraic closure of $C(t)$ and $\bF_C$ is contained in the field of Puiseux series in $t$
with coefficients in $C$.}
\par
We have supposed that $\Psi$ is differentially transcendental over $\bK$. Since $C$ is differentially algebraic over $\C$,
we conclude that $\bK_C$ is differentially algebraic over $\bK$ and, hence, that
$\Psi$ is differentially transcendental over $\bK_C$, by Lemma~\ref{lemma:DiffAlg}.
\par
Let us consider a functional equation of the form
\begin{equation}\label{eq:system}
    \phir(\Vec{y})=
    \begin{pmatrix}
    a &b \\0 &1
    \end{pmatrix}\Vec{y},
\end{equation}
with $a,b\in\Kalg_{C}$.
There exists a $\Kalg_{C}$-algebra $\cR$ (see \cite[Definition~2.3]{hardouin_differential_2008}) such that:
\begin{enumerate}[itemsep=0em]
    \item $\cR$ comes equipped with an extension of $\phir$
    and an extension of $\partial$, respecting the commutation: $\phir\circ\partial =\partial\circ\phir$;
    \item there exists a nonzero solution
    $Z\in\GL_2(\cR)$ such that (each column of) $Z$ is a solution of \eqref{eq:system} and $\cR$ is generated over $\Kalg$, as a ring, by $Z$ and all its derivatives with respect to $\partial$ and $\det Z^{-1}$;
    \item $\cR$ has no nontrivial ideals which are
    invariant by both $\partial$ and $\phir$.
\end{enumerate}
Notice that $Z$ can be chosen of the form
$\begin{pmatrix}w & f\\0 &1\end{pmatrix}$,
with $\phir(w)=aw$ and $\phir(f)=af+b$. Moreover,
$\cR^{\phir}=C$ and $\cR$ is unique up to an isomorphism of
$\Kalg_{C}$-algebras, commuting with $\partial$ and $\phir$.
See \cite[Proposition~2.4]{hardouin_differential_2008}.
\par
The group
$G:=\hbox{Aut}^{\partial,\phir}(\cR,\Kalg_{C})$ of the automorphisms of
$\cR$ as a $\Kalg_C$-algebra, commuting to $\partial$ and $\phir$, is the
parameterized Galois group of \eqref{eq:system}.
It acts on $Z$ respecting the functional equations satisfied by its entries, therefore for any
$\phi\in G$ we have:
    \[
    \phi(Z)=Z\begin{pmatrix}c_\phi & d_\phi\\0&1\end{pmatrix},
    \]
with $c_\phi,d_\phi\in C$, so that $\phi(w)=c_\phi w$ and $\phi(f)=f+d_\phi w$.
An important property of $G$ is that
it can be identified to a subgroup of $\GL_2(C)$ ``defined by differential equations''.
Roughly, $c_\phi$ and $d_\phi$ are solutions of a set of differential equations that
are compatible with the multiplication of matrices. See \cite[Theorem~2.6]{hardouin_differential_2008}.
Such a description of the parameterized Galois group explains the need to extend
the constants to a differentially closed field: if $C$ is not differentially closed we are not sure
to be able to find $c_\phi$ and $d_\phi$, and therefore
an automorphism of $\cR$ over $\Kalg_{C}$, commuting with $\partial$ and $\phir$, and different
from the identity automorphism.
Its existence is a key point of the proof of Proposition~\ref{prop:key-Galois-theory} below.
\par
\change{Let $F$ be the total ring of fractions of $\cR$, so that any $\phi\in G$ extends to an isomorphism of $F$.
Then the elements of $F$ have following important properties: the $g\in F$ has the property that $\phi(g)=g$ for all $\phi\in G$ if and only if $g\in\Kalg_{C}$. See \cite[Theorem~2.7]{hardouin_differential_2008}.}
Two \change{other} (independent) facts play a crucial role in the proof below.
First of all, the algebra $F$ satisfies the third assumption of $(\cF)$.
Secondly, $\Psi_n\in\Kalg_{C}$ for any $n\geq 0$,
therefore $\phi(\Psi_n)=\Psi_n$, for any $\phi$ in $G$.
It follows that $\phi$ commutes not only with $\partial=\Psi_0\frac{d}{dt}$ and all its iterations,
but also with $\frac{d}{dt}$ and all its higher order derivatives.
Namely we have $\phi(g')=\phi(g)'$ for any $g\in F$.

We are now ready to prove the main statement of this subsection:

\begin{prop}\label{prop:key-Galois-theory}
Let us consider an equation of the form $\phir(y)=ay+b$, with $a,b\in\bK$, such that $a\neq 0,1$ and $b\neq 0$.
Let $f\in \Falg\setminus \Kalg$
satisfy the equation $\phir(f)=af+b$.
\change{If $f$ is differentially algebraic over $\bK$, then $\phir(y)=ay$ has a solution $z$ in a convenient $\bK$-algebra, such that
$\frac{z'}{z}\in\bK$.}
\end{prop}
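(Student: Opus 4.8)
The plan is to run the parameterized Galois machinery assembled above and to \emph{manufacture} a nonzero solution of the homogeneous equation as a difference $\phi(f)-f$ for a suitably chosen Galois automorphism $\phi$, after which the multiplicative statement Proposition~\ref{prop:maint-multiplicative-ALG} does the rest. Concretely, I would first extend the constants to the differential closure $C$ and build, exactly as in the preceding paragraphs, the $\Kalg_{C}$-algebra $\cR$ attached to the system \eqref{eq:system} whose fundamental matrix $Z=\begin{pmatrix}w & f\\0&1\end{pmatrix}$ carries the given $f$ as its upper-right entry, together with its total ring of fractions $F$ and its Galois group $G=\mathrm{Aut}^{\partial,\phir}(\cR,\Kalg_{C})$. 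I will use repeatedly that every $\phi\in G$ fixes $\Kalg_{C}$ (hence $a$ and $b$), commutes with $\phir$, and, because each $\Psi_n$ lies in $\Kalg_{C}$, commutes with $\frac{d}{dt}$ as well; and that $g\in F$ is fixed by all of $G$ if and only if $g\in\Kalg_{C}$.

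The first genuine step is to produce $\phi\in G$ with $\phi(f)\neq f$. By the invariance property just recalled it suffices to verify $f\notin\Kalg_{C}$. Since $f\in\Falg$ has coefficients in $\C$, it lies in $\Falg\cap\Kalg_{C}$; decomposing $\Falg\otimes_\C C$ over a $\C$-basis of $C$ containing $1$ and comparing components shows that $\Falg\cap\Kalg_{C}=\Kalg$, so the hypothesis $f\in\Falg\setminus\Kalg$ forces $f\notin\Kalg_{C}$, and hence $f$ is not fixed by all of $G$. This is precisely where the enlargement of the constants to a differentially closed field is indispensable: over $\C$ the Galois group could be too small to move $f$, whereas over $C$ the Galois correspondence guarantees the existence of a nontrivial such $\phi$.

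Fix such a $\phi$ and set $z:=\phi(f)-f\neq 0$. Because $\phi$ commutes with $\phir$ and fixes $a,b$, we obtain $\phir(z)=a\phi(f)+b-(af+b)=az$, so $z$ is a \emph{nonzero} solution of the homogeneous equation $\phir(y)=ay$. I then check that $z$ is differentially algebraic over $\bK$: since $\phi$ fixes $\bK$ and commutes with $\frac{d}{dt}$, applying $\phi$ to a differential polynomial relation for $f$ over $\bK$ shows that $\phi(f)$ satisfies the same relation, so $f$ and $\phi(f)$ generate over $\bK$ a differential extension of finite transcendence degree. Working componentwise in $F$ (which, satisfying the third clause of $(\cF)$, is a finite product of differential fields), the difference $z=\phi(f)-f$ therefore satisfies an algebraic differential equation over $\bK$.

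Finally I realize $z$ inside a $\Kalg$-algebra satisfying $(\cF)$ and invoke Proposition~\ref{prop:maint-multiplicative-ALG}: its dichotomy says that either $z$ is differentially transcendental over $\C(t)$ or $\frac{z'}{z}\in\bK$. The previous step excludes the first alternative, so $\frac{z'}{z}\in\bK$, which is exactly the asserted homogeneous solution living in the convenient $\bK$-algebra $F$. I expect the main obstacle to be the careful bookkeeping in the middle: guaranteeing that the concrete (Puiseux-type) solution $f$ genuinely sits inside the abstract ring $F$ on which $G$ acts with invariant field $\Kalg_{C}$, and that $z=\phi(f)-f$ is a bona fide nonzero homogeneous solution to which Proposition~\ref{prop:maint-multiplicative-ALG} applies; this requires keeping precise control of the product-of-fields structure of $F$ and of the comparison between the concrete field $\Falg_{C}$ and the abstract Picard--Vessiot ring $\cR$.
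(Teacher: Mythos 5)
Your proposal is correct, and its engine is the same as the paper's: extend constants to a differential closure $C$, realize the concrete solution $f$ inside the parameterized Picard--Vessiot ring $\cR$ (with total ring of fractions $F$) for the system \eqref{eq:system}, use the Galois correspondence to produce $\phi\in G$ with $\phi(f)\neq f$, note that $z=\phi(f)-f$ is a nonzero differentially algebraic solution of $\phir(y)=ay$ (because $\phi$ fixes $\bK$, commutes with $\phir$, and commutes with $\frac{d}{dt}$ since each $\Psi_n\in\Kalg_C$), and conclude with Proposition~\ref{prop:maint-multiplicative-ALG}. The one place where you genuinely diverge is the descent step $f\notin\Kalg_{C}$. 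The paper first proves $f\notin\bK_{C}$: it writes a putative relation $P(t,f)=0$ over $C$ as $\sum_i p_iP_i(t,f)$ with $p_1,\dots,p_r$ a $\C$-basis of the span of the coefficients and $P_i$ over $\C$, and compares Puiseux coefficients to contradict the linear independence of the $p_i$; it then upgrades from $\bK_C$ to $\Kalg_{C}$ using the hypothesis that $f$ is differentially algebraic together with the pure differential transcendence of $\Kalg_{C}/\bK_{C}$. You instead prove the cleaner identity $\Falg\cap\Kalg_{C}=\Kalg$ directly: from $g\in\Falg$ and $gv=u$ with $u,v\in\Kalg\otimes_\C C$, decomposing over a $\C$-basis of $C$ inside the domain $\Falg\otimes_\C C$ gives $gv_\lambda=u_\lambda$ with $u_\lambda,v_\lambda\in\Kalg$, hence $g\in\Kalg$. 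This is more uniform (it never treats $f$ as a Puiseux series, so the case $\Falg=\bF(\log t)$ needs no separate comment) and it does not consume the differential-algebraicity hypothesis at this stage. Both routes then share the same lightly glossed bookkeeping: that an $\Falg_C$-algebra satisfying $(\cF)$ containing $f$ exists and contains a copy of $\cR$ (the paper settles this by invoking \cite{hardouin_differential_2008} twice), and that $z$ can be re-housed in a $\Kalg$-algebra satisfying $(\cF)$ so that Proposition~\ref{prop:maint-multiplicative-ALG} applies; you flagged these points rather than carried them out, and they are the only details left to complete.
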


\begin{proof}
Notice that $\bK\subset\bK_C$, $\Kalg\subset\Kalg_{C}$ and $\Falg\subset\Falg_{C}$.
By assumption, $f\in\Falg\setminus\Kalg$. In particular,
we know that $f$ does not belong to $\bK$ and we want to show that $f$ does not belong to $\bK_C$ either.
Let us suppose towards a contradiction that $f\in\bK_C$, i.e., that there exists a
polynomial $P$ in $2$ variables and with coefficients in $C$ such that
    \[
    P(t, f)=0.
    \]
\change{The polynomial $P$ cannot have all its coefficients in $\C$, because $f\not\in\bK$, by assumption. 
Moreover, the polynomial $P$ has a finite number of coefficients, 
therefore they generate a finite dimensional $\C$-vector space, contained in $C$. 
Let $p_1,\dots,p_r\in C$ be a $\C$-basis of such a vector space. Then we can rewrite 
$P$ as $P=\sum_{i=1}^rp_iP_i$, where each $P_i$ is a polynomial in $2$ variable with coefficients in $\C$, so that:
    \[
    0=P(t, f)\sum_{i=1}^rp_iP_i(t,f).
    \]
Once more we must have $P_i(t, f)\neq 0$, for any $i=1,\dots,r$.
Since $f$ is a Puiseux series with coefficients in $\C$,
any $P_i(t,f)$ is also a nonzero Puiseux series with coefficients in $\C$.
Therefore, we can develop the equation above according to the fractional powers of $t$ involved and find at least one nontrivial $\C$-linear combination 
of $p_1,\dots,p_r$ equal to $0$, which contradicts the construction of the $p_i$'s. 
So $f\not\in\bK_{C}$. Moreover, since $f$ is differentially algebraic over $\bK_C$, and
$\Kalg_{C}$ is purely differentially transcendental over $\bK_C$,
we also conclude that $f\not\in\Kalg_{C}$.}
\par
Now we are getting to the core of the proof.
By \cite[Proposition~6.17]{hardouin_differential_2008} or \cite[Theorem, page 164]{wibmer_existence_2012},
there exists an $\Falg_{C}$-algebra, satisfying $(\cF)$.
Since such a ring also contains $f$, applying  \cite[Proposition~6.17]{hardouin_differential_2008} again, one shows that it
contains a copy of the ring $\cR$ introduced above,
which is unique up to a isomorphism commuting with $\phir$ and $\partial$.
\par
Let $G$ be the Galois group of $\phir(y)=ay+b$ over $\Kalg_{C}$, that we have described above.
Since $f\not\in\Kalg_{C}$, there exists $\phi\in G$ such that $\phi(f)\neq f$.
By assumption, $f$ is differentially algebraic over $\bK$
with respect to $\frac{d}{dt}$,
hence there exists an integer $N\geq 0$ such
that the differential equation satisfied by $f$
can be written in the form:
    \[
    Q(f,f',\dots, f^{(N)})=0,
    \]
where $Q$ is a polynomial in $N+1$ indeterminates with coefficients in $\bK$.
Applying $\phi$ to the equation above we find that $\phi(f)$ is also differentially algebraic over $\bK$,
and actually satisfies the same differential
equation $Q(\phi(f),\phi(f)',\dots,\phi(f)^{(N)})=0$.
It follows that $z:=f-\phi(f)$, which is a nonzero solution of $\phir(y)=ay$, is differentially algebraic over
$\bK$. Therefore, we have proved that there exists a $\Kalg$-algebra satisfying $(\cF)$ and containing a
differentially algebraic solution $z$ of $\phir(y)=ay$.
It follows from Proposition~\ref{prop:maint-multiplicative-ALG} that $\frac{z'}{z}\in\bK$. This ends the proof.
\end{proof}

The following corollary is simply a rephrasing of Proposition~\ref{prop:key-Galois-theory}:

\begin{cor}\label{cor:key-Galois-theory}
We assume $(\cDT_{alg})$.
Let us consider an equation of the form $\phir(y)=ay+b$, with $a,b\in\bK$.
Let $f\in \Falg$
satisfy the equation $\phir(f)=af+b$.
If the functional equation $\phir(y)R'=y+\frac{a'}{a}$ does not have any algebraic solution, then either $f\in\bK$ or
$f$ is either differentially transcendental over $\bK$.
\end{cor}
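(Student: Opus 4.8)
The plan is to establish the equivalent statement that if $f$ is differentially algebraic over $\bK$ then $f\in\bK$; this, together with the definition of differential transcendence, yields the stated dichotomy. The engine of the argument is a reinterpretation of the conclusion of Proposition~\ref{prop:key-Galois-theory} by means of the logarithmic derivative. If $z$ is any nonzero solution of the homogeneous equation $\phir(y)=ay$, then the chain rule $\phir(z)'=\phir(z')R'$ yields
\[
\phir\!\left(\frac{z'}{z}\right)R'=\frac{\phir(z)'}{\phir(z)}=\frac{(az)'}{az}=\frac{z'}{z}+\frac{a'}{a},
\]
so that $u:=z'/z$ is a solution of $\phir(y)R'=y+\frac{a'}{a}$; this solution is algebraic precisely when $z'/z\in\bK$. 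Thus the hypothesis that $\phir(y)R'=y+\frac{a'}{a}$ has no (nonzero) algebraic solution says exactly that no nonzero solution of $\phir(y)=ay$ can have its logarithmic derivative in $\bK$.

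With this dictionary, I would first treat the case $f\in\Kalg$: since $(\cDT_{alg})$ makes $\Kalg/\bK$ purely differentially transcendental, Lemma~\ref{lemma:DiffAlg} forces any differentially algebraic $f\in\Kalg$ to lie in $\bK$, and we are done. So I may assume $f\in\Falg\setminus\Kalg$ and, seeking a contradiction, that $f$ is differentially algebraic over $\bK$. In the generic range $a\neq0,1$ and $b\neq0$, Proposition~\ref{prop:key-Galois-theory} then produces a nonzero solution $z$ of $\phir(y)=ay$ with $z'/z\in\bK$. If $a$ is nonconstant, the displayed identity makes $z'/z$ a \emph{nonzero} algebraic solution of $\phir(y)R'=y+\frac{a'}{a}$ (nonzero because $a'/a\neq0$), excluded by hypothesis; if $a$ is a nonzero constant different from $1$, it contradicts Lemma~\ref{lemma:main-multiplicative-alg}, by which $z'/z=c\,\Psi^{-1}\notin\bK$. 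Either way a differentially algebraic $f$ cannot lie in $\Falg\setminus\Kalg$, and combined with the previous case we obtain $f\in\bK$.

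It remains to absorb the coefficient values excluded from Proposition~\ref{prop:key-Galois-theory}. The value $a=0$ is ruled out by the very form of the hypothesis (and in any case $\phir(f)=b$ gives $f=\phir^{-1}(b)\in\bK$, since $\phir$ preserves and is invertible on $\bK$). If $b=0$ and $a=1$ then $f\in\Falg^{\phir}=\C\subseteq\bK$. If $b=0$ and $a\neq1$, the crucial point is that $f$ itself lies in the \emph{field} $\Falg$, so Proposition~\ref{prop:DiffAlgHanoi} applies verbatim to $\Falg/\Kalg$ (unlike in the abstract homogeneous setting, where one had to pass to a possibly non-field algebra); rerunning Step~1 of the proof of Theorem~\ref{thmINTRO:main-multiplicative} then gives $f'/f\in\bK$ whenever $f\neq0$ is differentially algebraic, again a forbidden nonzero algebraic solution. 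Finally, for $a=1$ and $b\neq0$ I would run both steps of the proof of Theorem~\ref{thmINTRO:main-additive} inside $\Falg/\Kalg$, passing from $f'\in\bK$ (Proposition~\ref{prop:main-additive-ALG}) to $f\in\bK$, the only change being that residues and partial fractions are now taken over the algebraically closed base $\bK$ rather than over $\C(t)$.

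The genuine mathematical difficulty lies entirely inside Proposition~\ref{prop:key-Galois-theory} — the extension of scalars to a differentially closed field of constants and the construction of a parameterized Galois automorphism $\phi$ with $\phi(f)\neq f$, whence $z:=f-\phi(f)$ is a differentially algebraic solution of $\phir(y)=ay$. For the corollary itself the only genuine care is the bookkeeping between $\bK$ and $\Kalg$, handled by the pure transcendence of $\Kalg/\bK$; and if one insists on a self-contained treatment of the degenerate case $a=1$, the main obstacle is reproducing the Step~2 residue computation over $\bK$ in place of $\C(t)$.
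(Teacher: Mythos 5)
Your core argument is the same as the paper's: after disposing of degenerate coefficient values, you use Lemma~\ref{lemma:DiffAlg} and the pure differential transcendence of $\Kalg/\bK$ to reduce to $f\in\Falg\setminus\Kalg$, invoke Proposition~\ref{prop:key-Galois-theory} to produce a nonzero solution $z$ of $\phir(y)=ay$ with $z'/z\in\bK$, and read $z'/z$ as an algebraic solution of $\phir(y)R'=y+\frac{a'}{a}$, contradicting the hypothesis. That is exactly the paper's proof. The differences are all in the bookkeeping of the excluded coefficient values. For $b=0$ you rerun Step~1 of Theorem~\ref{thmINTRO:main-multiplicative} via Proposition~\ref{prop:DiffAlgHanoi} applied to the field extension $\Falg/\Kalg$, where the paper cites Proposition~\ref{prop:maint-multiplicative-ALG}; your variant is legitimate, and arguably cleaner, since Proposition~\ref{prop:DiffAlgHanoi} is stated for fields and needs no generation hypothesis of type $(\cF)$ --- though note it requires the inhomogeneous term $\partial a/a$ to be nonzero, i.e.\ $a$ nonconstant.

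The one step that would not survive scrutiny is your treatment of $a=1$, $b\neq 0$. Step~2 of Theorem~\ref{thmINTRO:main-additive} rests on partial-fraction decomposition and residue computations for \emph{rational} functions, and this does not transfer to $\bK$: an algebraic function need not decompose as a $\C$-linear combination of logarithmic derivatives plus an exact derivative (differentials of the first kind on a curve of positive genus are precisely the obstruction), so the implication ``$f'\in\bK\Rightarrow f\in\bK$'' cannot be obtained this way, and Proposition~\ref{prop:main-additive-ALG} is deliberately stated with the weaker conclusion $f'\in\bK$. Fortunately this case --- like every case where $a$ is constant --- is vacuous: then $\frac{a'}{a}=0$ and $y=0$ is itself an algebraic solution of $\phir(y)R'=y$, so the hypothesis of the corollary fails. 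This observation is also what implicitly licenses the paper (and you) to invoke Proposition~\ref{prop:key-Galois-theory}, whose statement requires $a\neq 0,1$, without ever discussing $a=1$. If you replace your $a=1$ paragraph by this one-line vacuousness remark, your proof is complete and coincides with the paper's in all essential points.
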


\begin{proof}
If $a=0=b$, then $f=0\in\bK$. If $a=0$, but $b\neq 0$, then $\phir(f)=b$, hence $f=\phir^{-1}(b)\in\bK$.
Finally, if $b=0$, but $a\neq 0$, then $f$ is differentially transcendental by Proposition~\ref{prop:maint-multiplicative-ALG},
since $\frac{f'}{f}$ is a solution of $\phir(y)R'=y+\frac{a'}{a}$, which has no algebraic solution. So we can suppose that both $a$ and $b$ are nonzero.
\par
Let us suppose that $f$ is not algebraic and show that $f$ is necessarily differentially transcendental over $\bK$.
If, by contradiction, $f\notin\bK$ is differentially algebraic over $\bK$, then $f\not\in\Kalg$:
Indeed, $\Kalg/\bK$ is a purely differentially transcendental extension and it is enough to apply Lemma~\ref{lemma:DiffAlg}.
It follows from Proposition~\ref{prop:key-Galois-theory} that there must exist a solution of
$\phir(y)=ay$ whose logarithmic derivative is algebraic, in contradiction with the assumption that
$\phir(y)R'=y+\frac{a'}{a}$ does not have any algebraic solution.
Therefore $f$ is differentially transcendental over $\bK$ and we have proved the corollary.
\end{proof}

\subsection{End of proof of Theorem~\ref{thmINTRO:main-0}}

Applying Lemma~\ref{lemma:rat-2} both to $\phir(y)=ay+b$ and $\phir(y)R'=y+\frac{a'}{a}$,
we can restate
Corollary~\ref{cor:key-Galois-theory} as follows:

\begin{cor}\label{cor:key-Galois-theory-bis}
We assume that $R$ satisfies $(\cR)$.
Let us consider an equation of the form $\phir(y)=ay+b$, with $R,a,b\in\C(t)$ and $a,b\neq 0$.
Let $f\in \Falg$
satisfy the equation $\phir(f)=af+b$.
If the functional equation $\phir(y)R'=y+\frac{a'}{a}$ does not have any rational solution,
then either $f\in\C(t)$ or
$f$ is differentially transcendental over $\C(t)$.
\end{cor}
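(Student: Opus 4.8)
The statement is Corollary~\ref{cor:key-Galois-theory} with the word ``algebraic'' replaced by ``rational'' in both its hypothesis and its conclusion, so my plan is to bridge these two notions by two applications of Lemma~\ref{lemma:rat-2}: one to the inhomogeneous equation $\phir(y)=ay+b$ and one to the auxiliary equation $\phir(y)R'=y+\frac{a'}{a}$. Two preliminary reductions cost nothing. First, since $\bK/\C(t)$ is algebraic and hence differentially algebraic, Lemma~\ref{lemma:DiffAlg} gives that $f$ is differentially transcendental over $\bK$ if and only if it is so over $\C(t)$, so the transcendental alternatives of the two dichotomies coincide. Second, under $(\cR)$ Proposition~\ref{prop:Psi} shows that either $\Psi$ is differentially transcendental, so that $(\cDT_{alg})$ holds and Corollary~\ref{cor:key-Galois-theory} is available, or $R$ is conjugate by a homography to $t^d$ or $\pm T_d(t)$; in these B\"ottcher cases a rational change of variable turns the equation into a Mahler equation and one concludes, exactly as in the proofs of Theorem~\ref{thmINTRO:a=1} and Theorem~\ref{thmINTRO:b=0}, by the classical Mahler dichotomy followed by the same rationality argument. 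I may therefore assume $(\cDT_{alg})$. Finally, if $a$ is a nonzero constant then $\frac{a'}{a}=0$, so $y=0$ is a rational solution of $\phir(y)R'=y+\frac{a'}{a}$, the hypothesis fails, and there is nothing to prove; hence I may assume $a$ nonconstant, i.e. $\frac{a'}{a}\neq 0$.

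The engine of the proof is the auxiliary equation. I would first check that $\phir(y)R'=y$ has no nonzero algebraic solution: if $y$ solves it then $\phir(y\Psi)=\phir(y)\phir(\Psi)=(y/R')(R'\Psi)=y\Psi$, so $y\Psi$ lies in $\Falg^{\phir}=\C$ and $y=c/\Psi$ for some $c\in\C$; an algebraic such $y$ forces $c=0$ because $\Psi$ is differentially transcendental. Now rewrite $\phir(y)R'=y+\frac{a'}{a}$ as $\phir(y)=\frac{1}{R'}y+\frac{a'}{aR'}$, an inhomogeneous equation with nonzero rational coefficients whose homogeneous part is precisely $\phir(y)R'=y$. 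Lemma~\ref{lemma:rat-2} then applies and says that any algebraic solution of this auxiliary equation is rational; contrapositively, the assumed absence of a rational solution upgrades to the absence of any algebraic solution, which is exactly the hypothesis of Corollary~\ref{cor:key-Galois-theory}.

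Applying Corollary~\ref{cor:key-Galois-theory} now gives that $f\in\bK$ or $f$ is differentially transcendental over $\bK$; in the latter case the transcendence equivalence finishes the proof. In the former case I must upgrade $f\in\bK$ to $f\in\C(t)$. For this I would apply Lemma~\ref{lemma:rat-2} to the original equation $\phir(y)=ay+b$, which has nonzero rational coefficients; its hypothesis requires that $\phir(y)=ay$ have no nonzero algebraic solution, and this follows from the absence of an algebraic solution of the auxiliary equation established above, since a nonzero algebraic $z$ with $\phir(z)=az$ would produce the algebraic solution $z'/z$ of $\phir(y)R'=y+\frac{a'}{a}$ (as one sees by taking the logarithmic derivative of $\phir(z)=az$ and using the chain rule). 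Lemma~\ref{lemma:rat-2} then makes $f$ rational, completing the dichotomy.

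The serious mathematical content---the Galois-theoretic production of a differentially algebraic solution of the homogeneous equation with rational logarithmic derivative, and the rationality of algebraic solutions---is entirely contained in Corollary~\ref{cor:key-Galois-theory} and Lemma~\ref{lemma:rat-2}. The main obstacle in assembling the corollary is therefore essentially organizational: one must route the algebraic-versus-rational passage through the logarithmic derivative correctly, notice that Lemma~\ref{lemma:rat-2} applies to the auxiliary equation only when $a$ is nonconstant, and dispatch the constant-$a$ case by the vacuity remark. The reduction from $(\cR)$ to $(\cDT_{alg})$ through the Mahler cases is the one step that imports genuine work, but it repeats verbatim the reductions already carried out for Theorems~\ref{thmINTRO:a=1} and~\ref{thmINTRO:b=0}.
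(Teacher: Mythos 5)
Your proposal is correct and follows essentially the same route as the paper: the paper's own (very terse) proof likewise reduces to $(\cDT_{alg})$ via the Mahler/B\"ottcher dichotomy and then applies Lemma~\ref{lemma:rat-2} twice---once to $\phir(y)R'=y+\frac{a'}{a}$ and once to $\phir(y)=ay+b$---to pass between ``algebraic'' and ``rational'' before invoking Corollary~\ref{cor:key-Galois-theory}. Your write-up merely makes explicit the verifications the paper leaves implicit (that $\phir(y)R'=y$ has no nonzero algebraic solution via the $y\Psi$ argument, the logarithmic-derivative link between the homogeneous and auxiliary equations, and the vacuity of the constant-$a$ case), all of which are sound.
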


\begin{proof}
Because of the previous remark, the statement hold under the assumption $(\cDT)$. If $\Psi$ is differentially algebraic, this is the aforementioned statement
on Mahler equation.
\end{proof}

Notice that, thanks to Theorem~\ref{thmINTRO:b=0},
the corollary above is nothing else then Theorem~\ref{thmINTRO:main-R-algebraic}, mentioned in the introduction.
We are finally ready to complete the proof of Theorem~\ref{thmINTRO:main-0}.
We can work under the assumption $(\cDT)$, since the case where $\Psi$ is differentially algebraic has been already discussed at the beginning of this section.
We summarize the situations in which assuming $(\cDT)$  we have already proved Theorem~\ref{thmINTRO:main-0}:
\begin{enumerate}
  \item If $a=0$, the functional equation $\phir(y)=b$ has an algebraic solution, while if $b=0$, we have already proved the statement in
  Theorem~\ref{thmINTRO:main-multiplicative}.
  \item If all the nonzero solutions of $\phir(y)=ay$ are differentially transcendental over $\C(t)$,
  i.e.  $\phir(y)R'=y+\frac{a'}{a}$ does not have any rational solution, then
  we have prove the theorem in Corollary~\ref{cor:key-Galois-theory-bis}.
  \item If $\phir(y)=ay$ has a rational solution $z$, then we apply Theorem~\ref{thmINTRO:main-additive} to the solution $\frac{f}{z}$ of the functional equation
  $\phir(y)=y+\frac{b}{az}$. Therefore we have proved the theorem in this case, too. This covers in particular the case $a=1$.
\end{enumerate}
We need to show the statement under the assumption that $\phir(y)=ay$ has a nonzero differentially algebraic solution, which is not rational.
We have seen in
Theorem~\ref{thmINTRO:main-multiplicative} that in this case $z\in\bK$ and $\alpha:=\frac{z'}{z}\in\C(t)$.
The quotient $\frac{f}{z}$ is solution of the functional equation with algebraic coefficients $\phir(y)=y+\frac{b}{az}$, therefore
we know by Proposition~\ref{prop:main-additive-ALG} that
    \[
    \left(\frac{f}{z}\right)'=\frac{f'-\alpha f}{z}\in\bK.
    \]
We set $\beta:=f'-\alpha f$, which belongs to $\bK$, because $z$ does. Since
    \[
    \phir\left(\Big(\frac{f}{z}\Big)'\right)R'=\left(\frac{f}{z}\right)'+\frac{1}{z}\left[\left(\frac{b}{a}\right)'-\alpha\frac{b}{a}\right],
    \]
one obtain by a direct calculation that $\phir(\beta)=\frac{a}{R'}\beta+a\left[\left(\frac{b}{a}\right)'-\alpha\frac{b}{a}\right]$.
If $\left(\frac{b}{a}\right)'=\alpha\frac{b}{a}$, then $z$ and $\frac{b}{a}$ coincide up to a multiplicative constant, hence $z\in\C(t)$, against the assumption.
Therefore $a\left[\left(\frac{b}{a}\right)'-\alpha\frac{b}{a}\right]\neq 0$. Notice that $\frac{a}{R'}\neq 0$, by assumption.
By Lemma~\ref{lemma:rat-2}, if we show that the functional equation $\phir(y)=\frac{a}{R'}y$ does not have any nonzero algebraic solution,
we can conclude that $\beta\in\C(t)$, which would complete the proof of the theorem.
Let us suppose by contradiction that there exists $w\in\bK$, $w\neq 0$, such that $\phir (w)=\frac{a}{R'}w$.
Since $\phir(\Psi)=R'\Psi$, we conclude that $\phir(w\Psi)=aw\Psi$. Therefore there exists $c\in\C$ such that $z=cw\Psi$, which
implies that $z$ is differentially trascendental, against the assumption. We have obtained a contradiction, hence $\phir(y)=\frac{a}{R'}y$ does not have any nonzero algebraic solution.


\appendix


\noindent{{\sc L. Di Vizio}\\
CNRS et UVSQ, Laboratoire de Mathématiques de Versailles,
45 Avenue des États-Unis, 78035 Versailles, FRANCE. \href{mailto:lucia.di.vizio@math.cnrs.fr}{lucia.di.vizio@math.cnrs.fr}} ORCID 0000-0003-4313-9072\\

\noindent{{\sc G. Fernandes}\\UVSQ, Laboratoire de Mathématiques de Versailles,
45 Avenue des États-Unis, 78035 Versailles, FRANCE.
\href{mailto:gwladys.fernandes.maths@gmail.com}{gwladys.fernandes.maths@gmail.com}} ORCID 0000-0003-0832-6614\\

\noindent{{\sc M. Mishna}\\ Department of Mathematics, Simon Fraser University, Burnaby BC, V5A 1S6 CANADA.   \href{mailto:mmishna@sfu.ca}{mmishna@sfu.ca}} ORCID 0000-0001-5197-5973\\
 
\end{document}